\DeclareMathAlphabet\stixscr{LS1}{stixscr}{m}{n}
\setlist{leftmargin=*}
\newcommand{\im}{\operatorname{im}}
\newcommand{\Cl}{\operatorname{Cl}}
\newcommand{\Aut}{\operatorname{Aut}}
\newtheorem{theorem}{Theorem}[section] 
\newtheorem{proposition}[theorem]{Proposition}
\newtheorem{lemma}[theorem]{Lemma}
\newtheorem{corollary}[theorem]{Corollary} 
\theoremstyle{definition}
\newtheorem{definition}[theorem]{Definition}
\newtheorem{example}[theorem]{Example}
\newtheorem*{construction}{Construction}
\theoremstyle{remark} 
\newtheorem{remark}[theorem]{Remark}
\newtheorem{remarks}[theorem]{Remarks}
\newcommand{\injto}{\hookrightarrow}
\newcommand{\epito}{\twoheadrightarrow}
\newcommand{\bbT}{\mathbb{T}}
\newcommand{\cM}{\mathcal{M}}
\newcommand{\cT}{\mathcal{T}}
\newcommand{\pls}{\operatorname{PLS}}
\newcommand{\GQ}{\operatorname{GQ}}
\newcommand{\PQ}{\operatorname{PQ}}
\newcommand{\restr}{\mathord{\upharpoonright}}
\newcommand{\lseq}{\prescript{}{=}{}}
\newcommand{\kk}{k}
\def\cf{cf.\kern.3em}
\def\eg{e.g.\kern.3em}
\def\ie{i.e.\kern.3em}
\newcommand{\arXiv}[1]{\href{http://arxiv.org/abs/#1}{\texttt{arXiv\string:\allowbreak#1}}}
\title{On highly regular strongly regular graphs} 
\author[Ch.~Pech]{Christian Pech}
\address{ORCID ID: 0000-0002-5018-013X}
\email{cpech@freenet.de}
\urladdr{https://www.researchgate.net/profile/Christian\_Pech2}
\keywords{strongly regular graphs, invariants, $\kk$-isoregularity, $t$-vertex condition,
  partial quadrangles, generalized quadrangles, partial linear spaces}
\subjclass[2010]{05E30 (51E12)}
\begin{document}

\begin{abstract}
	In this paper we unify several existing regularity conditions for graphs, including strong regularity, $\kk$-isoregularity, and the $t$-vertex condition. We develop an algebraic composition/decomposition theory of regularity conditions.

	Using our theoretical results we show that a family of non rank 3 graphs known to satisfy the $7$-vertex condition fulfills an even stronger  condition, $(3,7)$-regularity (the notion is defined in the text). 
  
	Derived from this family we obtain a new infinite family of non rank $3$ strongly regular graphs satisfying the $6$-vertex condition. This strengthens and generalizes previous results by Reichard.
\end{abstract}
 
\maketitle

\section{Introduction}
Strongly regular graphs (srgs) are simple regular graphs with the property that the number of common neighbors of a pair of distinct vertices depends only on whether the two vertices are connected by an edge or not. Originally introduced by R.~C.~Bose in \cite{Bos63}, they are one of the central notions of modern algebraic graph theory. Small examples include the pentagon, the Petersen graph, triangular graphs, the Clebsch graph,\dots (A.~E.~Brouwer maintains a list of known small examples at \cite{AEBList}). Srgs arise, \eg as  orbital graphs of permutation groups of rank three (such srgs are usually called \emph{rank $3$ graphs} or \emph{$2$-homogeneous graphs}). Thanks to the classification of finite simple groups, all rank $3$ graphs are known by now (\cf \cite{Ban72,KanLie82,LieSax86}). However, by no means all srgs arise in this way. In fact strongly regular graphs exist in such an abundance that nowadays a complete classification up to isomorphism seems hopeless (\cf \cite{Wal71,FDF02, Muz07}). In order to single out the more interesting specimen it is necessary to impose stronger regularity conditions. One possible such regularity condition is the so-called \emph{$t$-vertex condition} that was introduced by  D.~G.~Higman in \cite{Hig71} (\cf also \cite{HesHig71}). A graph is said to fulfill the $t$-vertex condition if the number of subgraphs with at most $t$ vertices of a given isomorphism type over a fixed pair of vertices depends only on whether or not the vertices are connected by an edge or whether they are equal. Thus the $t$-vertex condition is in fact a class of regularity conditions parameterized by $t$ which generalizes the regularity conditions of strongly regular graphs. In particular, the srgs are precisely the  graphs that fulfill the $3$-vertex condition. Clearly, all rank $3$ graphs satisfy the $t$-vertex condition for arbitrary $t$. Of special interest are non-rank 3 graphs that satisfy the $t$-vertex condition for some $t>3$. The smallest examples for $t=4$ have order $36$ (\cf \cite{KliMesReiRos05}). As non-rank 3 srgs with the $t$-vertex condition for $t>3$ appear to be very rare, there has been an ongoing research effort to discover new examples and to understand their nature (\cf \cite{Iva89,Iva94,Reich00,KliMesReiRos05,KasKhaOes12,Rei15}). 

Another class of regularity conditions strengthening strong regularity is $\kk$-isoregularity. A graph is said to be $\kk$-isoregular if for every set $S$ of at most $\kk$ vertices the number of common neighbors of the elements of $S$ depends only on the isomorphism type of the subgraph induced by $S$. The srgs are precisely the $2$-isoregular graphs. In the same way that the $t$-vertex condition is a combinatorial approximation of $2$-homogeneity, $\kk$-isoregularity is a combinatorial approximation of $\kk$-homogeneity.  The notion of $\kk$-isoregularity  has its origins in works by J.~M.~J.~Buczak, Ja.~Ju.~Gol'fand, and M.~Klin (\cite{Buc80,GolKli78}). 

For a comprehensive overview on the history and the literature related to the $t$-vertex condition and to $\kk$-isoregularity we refer to Section 9 of Reichard's \cite{Rei15}.

Every $5$-isoregular finite graph is homogeneous (\cf \cite{Cam80}), \ie every isomorphism between subgraphs extends to an automorphism. Similarly, it was conjectured by M.~Klin (\cf \cite{FarKliMuz94}) that there is a number $t_0$ such that an srg is $2$-homogeneous if and only if it satisfies the $t_0$-vertex condition. In order to prove or refute this conjecture it is necessary to have good methods for observing whether or not a given graph fulfills the $t$-vertex condition.  Already in \cite{HesHig71} Hestenes and Higman noticed that in order to verify the $4$-vertex condition it is enough to test it just for two types of subgraphs. More results on how to simplify the testing of the $t$-vertex condition were given by A.~V.~Ivanov and S.~Reichard \cite{Iva94,Reich00}. 

In this paper we develop a general theory of regularity conditions applicable, in principle, to many categories of combinatorial objects. This leads us to new criteria for the $t$-vertex condition and for $(\kk,t)$-regularity (a regularity condition that strengthens the concept of $\kk$-isoregularity in the same way as the $t$-vertex condition strengthens  the concept of $2$-isoregularity). 

Using our theory, we show that the point graphs of partial quadrangles (in the sense of \cite{Cam75}) fulfill the $5$-vertex condition (\cf Theorem~\ref{pqfivevert}). Moreover, we show that if the point graph of a partial quadrangle is $3$-isoregular, then it is $(3,7)$-regular (\cf Theorem~\ref{threseven}). In particular, the point graphs of generalized quadrangles of order $(q,q^2)$ are $(3,7)$-regular (this strengthens  a recent result by S.~Reichard \cite{Rei15} stating that the point graphs of $\GQ(q,q^2)$ satisfy the $7$-vertex condition). As a consequence we obtain that the point graphs of partial quadrangles of order $(s,t,\mu)=(q-1,q^2,q^2-q)$ satisfy the $6$-vertex condition (\cf Corollary~\ref{pqsixvert}). 

In this paper problems from algebraic graph theory are treated using methods from category theory. While the paper is written in a mostly self-contained manner, it may be helpful to have some standard literature from both fields at hand. A modern source for algebraic graph theory is \cite{GodRoy01}. For notions from category theory we refer to the classics \cite{HCA1,Mac98}. Finally, for a recent survey on  homogeneous structures we refer to \cite{Mac11}.

\section{Preliminaries}
Let us start by fixing some notations: A graph is a pair $(V,E)$ where $V$ is a finite set of vertices and $E\subseteq\binom{V}{2}$ is a set of undirected edges. If $\Gamma$ is a graph, then by $V(\Gamma)$ we denote the vertex set and by $E(\Gamma)$ we denote the edge set of $\Gamma$. If $M\subseteq V(\Gamma)$, then by $\Gamma(M)$ we denote the subgraph of $\Gamma$ induced by $M$. As usual, the \emph{order} of a graph is the number of its vertices and the \emph{valency} of a vertex is the number of its neighbors. A graph $\Gamma$ for which $E(\Gamma)=\binom{V(\Gamma)}{2}$ is called a \emph{complete graph}.  A  complete graph of order $n$  is  denoted by $K_n$. The \emph{complement} of a graph $\Gamma$ is $(V(\Gamma),\binom{V(\Gamma)}{2}\setminus E(\Gamma))$. It is denoted by $\overline{\Gamma}$.

The class of all graphs can be naturally equipped with a concept of homomorphisms: A \emph{graph homomorphism} (or short: \emph{homomorphism}) from a graph $\Gamma_1$ to a graph $\Gamma_2$ is a function $f\colon V(\Gamma_1)\to V(\Gamma_2)$ with the property that for each $\{v,w\}\in E(\Gamma_1)$ we have that $\{f(v),f(w)\}\in E(\Gamma_2)$. A one-to-one homomorphism $f\colon \Gamma_1\to \Gamma_2$ is called an \emph{embedding} if for all $\{v,w\}\in\binom{V(\Gamma_1)}{2}: \{v,w\}\in E(\Gamma_1)\iff \{f(v),f(w)\}\in E(\Gamma_2)$.  

Following the tradition of category theory (and conflicting with parts of the tradition of algebraic graph theory),  whenever $f\colon A\to B$, and $g\colon B\to C$, then the composition of $f$ and $g$ is a morphism from $A$ to $C$ that is denoted by $g\circ f$. That is, we use the intuition that morphisms are functions that are applied to elements of their domain from the left, so that $(f\circ g)(x) = f(g(x))$.

Next we  introduce the main construction principle of graphs relevant for this paper. It has a combinatorial and a category theoretic dimension. Let us start with the category theoretic one:
\begin{definition}\label{defpushout}
	Let $\Delta$, $\Theta_1$, $\Theta_2$ be graphs and let $f_1\colon \Delta\to\Theta_1$, $f_2\colon \Delta\to\Theta_2$ be homomorphisms. A \emph{compatible cocone} of $(f_1,f_2)$ is a pair $(g_1,g_2)$ where  $g_1\colon \Theta_1\to \Theta$, $g_2\colon \Theta_2\to\Theta$ for some graph $\Theta$, such that the following diagram commutes:
    \begin{equation}\label{pushout}
    \begin{tikzcd}
	    \Theta_1 \rar{g_1}& \Theta\\
    	 \Delta \uar{f_1}\rar{f_2}& \Theta_2\uar{g_2}
    \end{tikzcd}
    \end{equation}
	$(g_1,g_2)$ is called a \emph{limiting cocone} of $(f_1,f_2)$ if for any other compatible cocone $(h_1,h_2)$ of $(f_1,f_2)$ where $h_1\colon \Theta_1\to\Gamma$, $h_2\colon \Theta_2\to \Gamma$ there exists a unique homomorphism $k\colon \Theta\to\Gamma$ such that the following diagram commutes:
	\[
    \begin{tikzcd}
        ~ & & \Gamma\\
        \Theta_1\rar{g_1}\arrow[bend left]{urr}{h_1} & \Theta \urar[dashed]{k}\\
        \Delta \uar{f_1}\rar{f_2}& \Theta_2\uar{g_2}\arrow[bend right]{uur}{h_2}
    \end{tikzcd}
    \] 
   In that case the diagram~\eqref{pushout} is called a \emph{pushout square}. 
\end{definition}
For us, only the special case when $(f_2,f_2)$ is a pair of embeddings is of interest. In this case, for every limiting cocone $(g_1,g_2)$ of $(f_1,f_2)$ we have that $g_1$ and $g_2$ are embeddings, too. 
A concrete construction of limiting cocones of pairs of embeddings in the category of graphs goes as follows:  
\begin{construction}
    Let $f_1\colon \Delta\injto \Theta_1$, $f_2\colon \Delta\injto\Theta_2$ be embeddings. Let $\widetilde\Theta$ be the disjoint union of $\Theta_1$ and $\Theta_2$. Let $\theta\subseteq V(\widetilde\Theta)^2$ be the smallest equivalence relation that contains $\{ (f_1(v),f_2(v))\mid v\in V(\Delta)\}$.
    Let $\Theta:=\widetilde\Theta/\theta$ \textup{(}vertices of $\Theta$ are equivalence classes of $\theta$ and two classes are connected by an edge if  some representatives of the classes  are connected by an edge in $\widetilde\Theta$\textup{)}. Finally, let $g_1\colon \Theta_1\injto\Theta$ and $g_2\colon \Theta_2\injto\Theta$ be given by
    $g_1\colon  v\mapsto [v]_\theta$, $g_2\colon  w\mapsto[w]_\theta$.
    Then  $(g_1,g_2)$ is a limiting cocone for $(f_1,f_2)$. 
    
    Note that $\theta$ has equivalence classes of size $\le 2$. One can imagine that $\Theta$ is obtained by glueing  $\Theta_1$ and $\Theta_2$  together at a copy of $\Delta$, which is marked in $\Theta_1$ and $\Theta_2$ through $f_1$ and $f_2$, respectively. This construction is also known under the name \emph{graph amalgamation}, \emph{fibered sum} or \emph{amalgamated free sum} (\cf \cite{Nes79,Mac98}).
\end{construction}
\begin{example}
	Consider the following three graphs:
	\[
	\Delta\colon\begin{tikzpicture}[baseline={(current bounding box.center)}]
	\SetGraphUnit{1}
    \GraphInit[vstyle=Welsh]
    \renewcommand*{\VertexSmallMinSize}{2pt}
    \SetVertexMath
    \Vertex[Lpos=-135]{x}
    \EA[Lpos=-45](x){y}
    \Edge(x)(y)
	\end{tikzpicture}\qquad \Theta_1\colon
	\begin{tikzpicture}[baseline={(current bounding box.center)}]
	\SetGraphUnit{1}
    \GraphInit[vstyle=Welsh]
    \renewcommand*{\VertexSmallMinSize}{2pt}
    \SetVertexMath
    \Vertex[Lpos=-135]{u_1}
    \EA[Lpos=-45](u_1){u_2}
    \Vertex[x=0.5,y=1,Lpos=90]{u_3}
    \Edges(u_1,u_2,u_3,u_1)
	\end{tikzpicture}\qquad \Theta_2\colon
	\begin{tikzpicture}[baseline={(current bounding box.center)}]
	\SetGraphUnit{1}
    \GraphInit[vstyle=Welsh]
    \renewcommand*{\VertexSmallMinSize}{2pt}
    \SetVertexMath
    \Vertex[Lpos=-135]{v_1}
    \EA[Lpos=-45](v_1){v_2}
    \NO[Lpos=135](v_1){v_3}
    \NO[Lpos=45](v_2){v_4}
    \Edges(v_1,v_2,v_4,v_3,v_1)
	\end{tikzpicture}
	\]
	Define $f_1\colon\Delta\injto\Theta_1$ and $f_2\colon\Delta\injto\Theta_2$ according to
	\[
	f_1\colon x\mapsto u_1, y\mapsto u_2;\qquad f_2\colon x\mapsto v_3, y\mapsto v_4.
	\]
	According to the construction of amalgamated free sums we have
	$ 
	V(\Theta)=(V(\Theta_1)\dot\cup V(\Theta_2))/\theta,
	$
	where $\theta$ is the equivalence relation on $V(\Theta_1)\dot\cup V(\Theta_2)$ generated by 
	\[
	\{(f_1(x),f_2(x)),(f_1(y),f_2(y))\}=\{(u_1,v_3),(u_2,v_4)\}.
	\]
	 In other words, $V(\Theta)=\{\{u_1,v_3\},\{u_2,v_4\},\{u_3\},\{v_1\},\{v_2\}\}$, and $\Theta$ is given by
	 \[
	 \Theta\colon
	\begin{tikzpicture}[baseline={(current bounding box.center)}]
	\SetGraphUnit{1}
    \GraphInit[vstyle=Welsh]
    \renewcommand*{\VertexSmallMinSize}{2pt}
    \SetVertexMath
    \Vertex[Lpos=-135,L={\{v_1\}}]{v_1}
    \EA[Lpos=-45,L={\{v_2\}}](v_1){v_2}
    \NO[Lpos=135,L={\{u_1,v_3\}}](v_1){v_3}
    \NO[Lpos=45,L={\{u_2,v_4\}}](v_2){v_4}
    \Vertex[x=0.5,y=2,Lpos=90,L={\{u_3\}}]{u_3}
    \Edges(v_1,v_2,v_4,v_3,v_1)
    \Edges(v_3,u_3,v_4)
	\end{tikzpicture}
	 \]
\end{example}

\section{Graph types and regularity conditions} \label{regularitycond}
The $t$-vertex condition arises from a local invariant of pairs of vertices of a graph. Let $\Gamma=(V,E)$ be a graph and let $(x,y)\in V^2$. We consider all induced subgraphs of $\Gamma$ that contain $x$ and $y$ and that have order $\le t$. Two such subgraphs are said to be of the same type if they are isomorphic by an isomorphism that fixes $x$ and $y$. The possible types of subgraphs correspond to all isomorphism classes of graphs of order $\le t$ with a pair of distinguished vertices. To the pair $(x,y)\in V^2$ we may associate a function $\varphi_{x,y}$ from the types to the natural numbers that maps every type to the number of induced subgraphs of $\Gamma$ that contain $x$ and $y$ and that are of this type. Graphs where the function $\varphi_{x,y}$ does not depend directly on the pair $(x,y)$ but only on whether $x=y$ or $\{x,y\}\in E$ or $\{x,y\}\in \binom{V}{2}\setminus E$, are said to fulfill the \emph{$t$-vertex condition}. In the following we give an equivalent definition of the $t$-vertex condition using the language of category theory.

\subsection{Basic definitions}
%In the previous section we defined the category of graphs.  In order to characterize the $t$-vertex condition and other invariants of graphs we  consider another category, derived from the category of graphs:   
\begin{definition} 
	A \emph{graph type} $\mathbb{T}$ is a triple $(\Delta,\iota,\Theta)$ where $\Delta$ and $\Theta$ are graphs and $\iota\colon \Delta\injto\Theta$ is an embedding. The \emph{order} of $\mathbb{T}$ is the pair $(n,m)$ where $n$ is the order of $\Delta$ and $m$ is the order of $\Theta$. The graphs $\Delta$ and $\Theta$ are called  \emph{base graph} and \emph{underlying graph} of $\mathbb{T}$, respectively.  
\end{definition}
\begin{example}\label{extype}
	Consider the following graphs:
	\[ 
	\Delta\colon
	\begin{tikzpicture}[baseline={([yshift=-0ex]current bounding box.center)}]
		\SetGraphUnit{1}
	    \GraphInit[vstyle=Welsh]
	    \renewcommand*{\VertexSmallMinSize}{2pt}
	    \SetVertexMath
	    \SetVertexLabel
	    \Vertex[Lpos=180]{a_1}
	    \EA(a_1){a_2}
	   % \Edge(a_1)(a_2)
	\end{tikzpicture}\qquad\Theta\colon
	\begin{tikzpicture}[baseline={([yshift=-0ex]current bounding box.center)}]
		\SetGraphUnit{1}
	    \GraphInit[vstyle=Welsh]
	    \renewcommand*{\VertexSmallMinSize}{2pt}
	    \SetVertexMath
	    \SetVertexLabel
	    \Vertex[Lpos=225,Ldist=-4]{b_1}
	    \EA[Lpos=-45,Ldist=-4](b_1){b_2}
	    \NO[Lpos=135,Ldist=-4](b_1){b_3}
	    \EA[Lpos=45,Ldist=-4](b_3){b_4}
	    \Edges(b_2,b_3,b_1,b_4,b_2)
	    \Edge(b_3)(b_4)
	\end{tikzpicture}
	\]	
	$\iota\colon\Delta\injto\Theta$ shall be given by $\iota\colon a_1\mapsto b_1, a_2\mapsto b_2$.  Then $\mathbb{T}=(\Delta,\iota,\Theta)$ is a graph type of order $(2,4)$.
\end{example}
For given graph types $\mathbb{T}_1=(\Delta_1,\iota_1,\Theta_1)$ and $\mathbb{T}_2=(\Delta_2,\iota_2,\Theta_2)$ a \emph{morphism} from $\mathbb{T}_1$ to $\mathbb{T}_2$ is pair $(f,g)$ of graph homomorphisms such that $f\colon \Delta_1\to \Delta_2$, $g\colon \Theta_1\to\Theta_2$ and such that the following diagram commutes.
\[
\begin{tikzcd}
    \Delta_2\rar[hook]{\iota_1} & \Theta_2\\
    \Delta_1\rar[hook]{\iota_1}\uar{f} & \Theta_1\uar[swap]{g}
\end{tikzcd}
\]
With this choice of morphisms graph types form a category. In particular, there is  a natural concept of isomorphism between graph types. 
\begin{remark}
	When we depict a graph type $\bbT=(\Delta,\iota,\Theta)$, we prefer a more compact representation, than in Example~\ref{extype}. We  draw a picture of $\Theta$. Then we mark $\iota(v)$ in black, for all $v\in V(\Delta)$. Clearly, this determines the graph type up to isomorphism.	For instance, the graph type from Example~\ref{extype} is depicted as follows:
	\[ 
	\mathbb{T}\colon
	\begin{tikzpicture}[baseline={([yshift=-0ex]current bounding box.center)}]
		\SetGraphUnit{1}
	    \GraphInit[vstyle=Welsh]
	    \renewcommand*{\VertexSmallMinSize}{2pt}
	    \SetVertexMath
	    \SetVertexNoLabel	
	    \Vertex[Lpos=225]{a_1}
	    \EA[Lpos=-45](b_1){a_2}
	    \SetVertexNoLabel	
	    \NO[Lpos=135](b_1){b_3}
	    \EA[Lpos=45](b_3){b_4}
	    \Edges(b_2,b_3,b_1,b_4,b_2)
	    \Edge(b_3)(b_4)
   	    \AddVertexColor{black}{a_1,a_2}
	 \end{tikzpicture}
	\]
	In case it is not implied otherwise by the context, we  always assume that the base graph $\Delta$ is an induced subgraph of $\Theta$ and that the embedding $\iota$ is the identical embedding.
\end{remark}
A first  observation about the category of graph types is:
\begin{lemma}\label{fingraphtypes}
	Given natural numbers $m$ and $n$ such that $m\le n$, there are just finitely many isomorphism classes of graph types of order $(m,n)$. 
\end{lemma}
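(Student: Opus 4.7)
The plan is to observe that the lemma follows from three elementary finiteness facts, and to argue by a crude counting bound rather than by a delicate case analysis. Let me fix natural numbers $m \le n$ and consider all graph-types $\mathbb{T} = (\Delta,\tau,\Theta)$ with $|V(\Delta)|=m$ and $|V(\Theta)|=n$.

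First, I would note that up to isomorphism, there are only finitely many graphs of order $m$ and only finitely many graphs of order $n$: indeed, a graph on $k$ labeled vertices is determined by a subset of $\binom{k}{2}$, so there are at most $2^{\binom{k}{2}}$ labeled graphs on $k$ vertices, hence at most that many isomorphism classes. So I may fix, up to isomorphism, a finite list of candidates for $\Delta$ and for $\Theta$.

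Second, for each such fixed pair of representatives $\Delta$ and $\Theta$, the set of embeddings $\tau \colon \Delta \hookrightarrow \Theta$ is finite, because each embedding is in particular an injective function from $V(\Delta)$ to $V(\Theta)$, and there are only $\tfrac{n!}{(n-m)!}$ such injections. Hence the total number of triples $(\Delta,\tau,\Theta)$ arising from this finite list of representatives is finite.

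Finally, every graph-type of order $(m,n)$ is isomorphic (in the category of graph-types defined just above the lemma) to at least one of the triples produced in the previous paragraph: given any $(\Delta',\tau',\Theta')$ of order $(m,n)$, choose isomorphisms $\Delta' \to \Delta$ and $\Theta' \to \Theta$ onto the chosen representatives, transport $\tau'$ along them to obtain some embedding $\tau \colon \Delta \hookrightarrow \Theta$ in our finite list, and observe that the resulting square commutes by construction. Hence there are only finitely many isomorphism classes, as claimed. I do not anticipate a genuine obstacle in this argument; the only care needed is in writing down the correct notion of isomorphism of graph-types (a pair $(f,g)$ of graph-isomorphisms making the square with $\tau_1,\tau_2$ commute), which has already been set up in the paragraph defining morphisms of graph-types.
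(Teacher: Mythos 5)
Your argument is correct and follows essentially the same route as the paper's own proof: both reduce to the finiteness of unlabeled graphs of bounded order and then bound the number of embeddings (the paper counts images, i.e.\ at most $\binom{n}{m}$ per graph of order $n$, while you count injections, which is a slightly coarser but equally valid bound). Your final paragraph, checking that every graph-type is isomorphic to one built from the chosen representatives, is a detail the paper leaves implicit but does not change the substance.
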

\begin{proof}
	There are just finitely  many (say, $k$) unlabeled graphs of order $n$. Moreover, every graph of order $n$ accounts for at most $\binom{n}{m}$ graph types of order $(m,n)$, up to isomorphism.  Hence, there are at most $k\binom{n}{m}$ isomorphism classes of graph types of order $(m,n)$. 
\end{proof}

\begin{definition}
	Let $\mathbb{T}=(\Delta,\iota,\Theta)$ be a graph type, let $\Gamma$ be a graph, and let $\kappa\colon \Delta\injto\Gamma$ be an embedding. An embedding $\hat{\kappa}\colon \Theta\injto\Gamma$ is called an \emph{extension of $\kappa$ along $\iota$} if the following diagram commutes:
   \[
   \begin{tikzcd}
       \Theta \rar[hook]{\hat\kappa}& \Gamma \\
       \Delta\urar[hook]{\kappa}\uar[hook]{\iota}
   \end{tikzcd}
    \]
   The number of all extensions of $\kappa$ along $\iota$  is denoted by $\#(\Gamma, \mathbb{T},\kappa)$. If $\Delta$ embeds into $\Gamma$ and if for every pair of embeddings $\kappa,\kappa'\colon \Delta\injto\Gamma$ we have $\#(\Gamma, \mathbb{T},\kappa) = \#(\Gamma, \mathbb{T},\kappa')$, then this number is denoted by $\#(\Gamma, \mathbb{T})$. In case that $\Delta$ does not embed into $\Gamma$, we define $\#(\Gamma, \mathbb{T}):=0$. In both cases $\Gamma$ is called \emph{$\mathbb{T}$-regular}.
\end{definition}
\begin{example}
	Let us consider the complement graph $\Gamma_1$ of the Petersen graph:
	\[
		\Gamma_1\colon\begin{tikzpicture}[baseline={(current bounding box.center)}, x=0.8cm,y=0.8cm]
	    \GraphInit[vstyle=Welsh]
	    \renewcommand*{\VertexSmallMinSize}{2pt}
	    \SetVertexMath
	    \SetVertexLabel
		\Vertex[a=0,d=1,Lpos=0,Ldist=-2,L={1}]{a0}
		\Vertex[a=-36,d=3.5,Lpos=-36,L={6}]{b0}
		\Vertex[a=72,d=1,Lpos=90,Ldist=-2,L={2}]{a1}
		\Vertex[a=72-36,d=3.5,Lpos=72-36,L={7}]{b1}
		\Vertex[a=144,d=1,Lpos=144,Ldist=-2,L={3}]{a2}
		\Vertex[a=144-36,d=3.5,Lpos=144-36,L={8}]{b2}
		\Vertex[a=216,d=1,Lpos=216,Ldist=-2,L={4}]{a3}
		\Vertex[a=216-36,d=3.5,Lpos=216-36,L={9}]{b3}
		\Vertex[a=288,d=1,Lpos=270,Ldist=-2,L={5}]{a4}
		\Vertex[a=288-36,d=3.5,Lpos=288-36,L={10}]{b4}
	    \EdgeInGraphMod{a}{5}{2}
   	    \EdgeInGraphLoop{b}{5}
   	    \EdgeIdentity{a}{b}{5}
	    \EdgeMod{a}{b}{5}{1}
	    \EdgeMod[style={bend right}]{a}{b}{5}{2}
	    \EdgeMod[style={bend left}]{a}{b}{5}{4}
		\end{tikzpicture}
	\]
	Take the graph type from Example~\ref{extype}. Let us fix an embedding $\kappa\colon\Delta\injto\Gamma_1$, say, $\kappa\colon a_1\mapsto 1, a_2\mapsto 2$. The joint neighbors of $1$ and $2$ in $\Gamma_1$ are $4$, $6$, $7$, and $8$. These vertices induce a $4$-cycle. Thus, there are exactly eight extensions of $\kappa$ along $\iota$, namely
	\begin{align*}
	\hat\kappa_1 &\colon b_1\mapsto 1, b_2\mapsto 2, b_3\mapsto 4, b_4\mapsto 6, &
	\hat\kappa_2 &\colon b_1\mapsto 1, b_2\mapsto 2, b_3\mapsto 6, b_4\mapsto 4,\\
	\hat\kappa_3 &\colon b_1\mapsto 1, b_2\mapsto 2, b_3\mapsto 4, b_4\mapsto 8, &
	\hat\kappa_4 &\colon b_1\mapsto 1, b_2\mapsto 2, b_3\mapsto 8, b_4\mapsto 4, \\
	\hat\kappa_5 &\colon b_1\mapsto 1, b_2\mapsto 2, b_3\mapsto 6, b_4\mapsto 7, &
	\hat\kappa_6 &\colon b_1\mapsto 1, b_2\mapsto 2, b_3\mapsto 7, b_4\mapsto 6, \\
	\hat\kappa_7 &\colon b_1\mapsto 1, b_2\mapsto 2, b_3\mapsto 7, b_4\mapsto 8, &
	\hat\kappa_8 &\colon b_1\mapsto 1, b_2\mapsto 2, b_3\mapsto 8, b_4\mapsto 7.
	\end{align*}
	In particular, we observe that $\#(\Gamma_1, \mathbb{T},\kappa)=8$. Since the automorphism group of $\Gamma_1$ is a rank $3$ group, this number does not depend on the particular choice of $\kappa$. In other words, $\Gamma_1$ is $\mathbb{T}$-regular with $\#(\Gamma_1,\mathbb{T})=8$. 

	Let us now consider the Shrikhande graph $\Gamma_2$:
	\[
		\Gamma_2\colon\begin{tikzpicture}[baseline={(current bounding box.center)}, x=0.8cm,y=0.8cm]
	    \GraphInit[vstyle=Welsh]
	    \renewcommand*{\VertexSmallMinSize}{2pt}
	    \SetVertexMath
	    \SetVertexLabel
		\Vertex[a=0*45,d=1.449,Lpos=0,Ldist=-2,L={1}]{a0}
		\Vertex[a=0*45-0,d=3.5,Lpos=0,L={9}]{b0}
		\Vertex[a=1*45,d=1.449,Lpos=1*45,Ldist=-2,L={2}]{a1}
		\Vertex[a=1*45-0,d=3.5,Lpos=1*45-0,L={10}]{b1}
		\Vertex[a=2*45,d=1.449,Lpos=2*45,Ldist=-2,L={3}]{a2}
		\Vertex[a=2*45-0,d=3.5,Lpos=2*45-0,Ldist=-2,L={11}]{b2}
		\Vertex[a=3*45,d=1.449,Lpos=3*45,Ldist=-2,L={4}]{a3}
		\Vertex[a=3*45-0,d=3.5,Lpos=3*45-0,Ldist=-2,L={12}]{b3}
		\Vertex[a=4*45,d=1.449,Lpos=4*45,Ldist=-2,L={5}]{a4}
		\Vertex[a=4*45-0,d=3.5,Lpos=4*45-0,Ldist=-2,L={13}]{b4}
		\Vertex[a=5*45,d=1.449,Lpos=5*45,Ldist=-2,L={6}]{a5}
		\Vertex[a=5*45-0,d=3.5,Lpos=5*45-0,Ldist=-2,L={14}]{b5}
		\Vertex[a=6*45,d=1.449,Lpos=6*45,Ldist=-2,L={7}]{a6}
		\Vertex[a=6*45-0,d=3.5,Lpos=6*45-0,Ldist=-2,L={15}]{b6}
		\Vertex[a=7*45,d=1.449,Lpos=7*45,Ldist=-2,L={8}]{a7}
		\Vertex[a=7*45-0,d=3.5,Lpos=7*45-0,Ldist=-2,L={16}]{b7}
%		\Vertex[a=216,d=1,Lpos=216,Ldist=-2,L={4}]{a3}
%		\Vertex[a=216-36,d=3.5,Lpos=216-36,L={9}]{b3}
%		\Vertex[a=288,d=1,Lpos=270,Ldist=-2,L={5}]{a4}
%		\Vertex[a=288-36,d=3.5,Lpos=288-36,L={10}]{b4}
	    \EdgeInGraphMod{a}{8}{2}
	    \EdgeInGraphMod{a}{8}{3}
	    \EdgeMod{a}{b}{8}{1}
	    \EdgeMod{a}{b}{8}{7}
   	    \EdgeInGraphLoop{b}{8}
	    \EdgeInGraphMod{b}{8}{2}
%   	    \EdgeIdentity{a}{b}{5}
%	    \EdgeMod{a}{b}{5}{1}
%	    \EdgeMod[style={bend right}]{a}{b}{5}{2}
%	    \EdgeMod[style={bend left}]{a}{b}{5}{4}
		\end{tikzpicture}
	\]
	consider the embedding $\kappa\colon \Delta\injto \Gamma_2$ given by $\kappa\colon a_1\mapsto 1, a_2\mapsto 9$. The joint neighbors of $1$ and $9$ are $10$ and $16$, respectively. These two vertices are connected by an edge. Hence there are exactly two extensions of $\kappa$ along $\iota$, namely
	\begin{align*}
	\hat\kappa_1 &\colon b_1\mapsto 1, b_2\mapsto 9, b_3\mapsto 10, b_4\mapsto 16, &
	\hat\kappa_2 &\colon b_1\mapsto 1, b_2\mapsto 9, b_3\mapsto 16, b_4\mapsto 10.
	\end{align*}
	Thus, we have that $\#(\Gamma_2,\bbT, \kappa)=2$. However, if we consider  $\kappa'\colon\Delta\injto \Gamma_2$ given by $\kappa'\colon a_1\mapsto 1, a_2\mapsto 5$, then the two joint neighbors $3$ and $7$ of $1$ and $5$ are not connected by an edge in $\Gamma_2$. Consequently, there is no extension of $\kappa'$ along $\iota$ in $\Gamma_2$. In other words, $\#(\Gamma_2,\bbT,\kappa')=0$. It follows that the Shrikhande graph is not $\bbT$-regular.	
\end{example}

\begin{remarks}
	\begin{itemize}
		\item If $\bbT=(\Delta,\iota,\Theta)$ is a graph type of order $(0,n)$, and if $\Gamma$ is an arbitrary graph, then  $\Gamma$ is $\bbT$-regular. In this case  $\#(\Gamma,\bbT)$  is equal to the number of subgraphs of $\Gamma$ isomorphic to $\Theta$. 
		\item If $\bbT=(\Delta,\iota,\Theta)$ is a graph type of order $(n,n)$, and if $\Gamma$ is an arbitrary graph, then $\Gamma$ is $\bbT$-regular.  In this case $\#(\Gamma,\bbT)\in\{0,1\}$. It is $1$ if $\Gamma$ has a subgraph isomorphic to $\Theta$ and $0$ otherwise. 
		\item If $\bbT_1$ and $\bbT_2$ are isomorphic graph types, then every graph $\Gamma$ that is $\bbT_1$-regular, is also  $\bbT_2$-regular.  Moreover, in this case we have $\#(\Gamma, \bbT_1)= \#(\Gamma, \bbT_2)$. 
		\item   A concept equivalent to $\mathbb{T}$-regularity, but in the category of complete colored graphs, was introduced and studied by S.~Evdokimov and I.~Ponomarenko in \cite{EvdPon00} in relation with the $t$-vertex condition for association schemes.  
	\end{itemize}	
\end{remarks} 
A simple but important observation is:
\begin{lemma}\label{complreg}
	Let $\Gamma$ be $\bbT$-regular for $\bbT=(\Delta,\iota,\Theta)$. Then  $\overline{\Gamma}$ is $\overline{\bbT}$-regular, where $\overline{\bbT}:=(\overline{\Delta},\iota,\overline{\Theta})$. 
\end{lemma}
\begin{proof}
	Clear.
\end{proof}

\begin{definition}
	Let $m\le n$ be two natural numbers. We say that a graph $\Gamma$ is  
    \begin{itemize}
        \item $(\lseq m,\lseq n)$-regular if it is $\bbT$-regular for all graph types $\bbT$ of order $(m,n)$.
        \item $(\lseq m,n)$-regular if it is $(\lseq m,\lseq l)$-regular for all $m\le l\le n$,
        \item $(m,\lseq n)$-regular if it is $(\lseq k,\lseq n)$-regular for all  $k\le m$,
        \item $(m,n)$-regular if it is $(\lseq k,n)$-regular for all  $k\le m$,
    \end{itemize}
\end{definition}
The concept of $(m,n)$-regularity is a combinatorial approximation of the notion of $m$-homo\-geneity. Recall: 
\begin{definition}
    A graph $\Gamma$ is called \emph{$m$-homogeneous} if every isomorphism between induced subgraphs of order at most $m$ extends to an automorphism of $\Gamma$. It is called \emph{homogeneous} if every isomorphism between finite induced subgraphs extends to an automorphism. 
\end{definition}
It is not hard to see that for every graph $\Gamma$ of order $n$ we have that $m$-homogeneity is equivalent to $(m,n)$-regularity. 
\begin{lemma} 
	A graph $\Gamma$ satisfies the $t$-vertex condition if and only if it is  $(2,t)$-regular.
\end{lemma}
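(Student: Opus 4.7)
The plan is to establish a combinatorial dictionary between the $t$-vertex condition (phrased in terms of counting induced subgraphs of a given isomorphism type through a fixed pair of vertices) and $\bbT$-regularity (phrased in terms of counting extensions of embeddings). The two counts will turn out to agree up to a multiplicative constant that depends only on the graph-type, and both directions of the lemma will then follow at once.

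Concretely, I would fix a graph-type $\bbT = (\Delta, \iota, \Theta)$ of order $(k, l)$ with $k \le 2$ and $l \le t$, together with an embedding $\kappa: \Delta \injto \Gamma$. For any extension $\hat\kappa: \Theta \injto \Gamma$ of $\kappa$ along $\iota$, the image $\hat\kappa(V(\Theta))$ is an induced subgraph of $\Gamma$ containing the distinguished set $\kappa(V(\Delta))$. Two extensions $\hat\kappa_1$ and $\hat\kappa_2$ produce the same image precisely when $\hat\kappa_1^{-1} \circ \hat\kappa_2$ is an automorphism of $\Theta$, and the condition $\hat\kappa_i \circ \iota = \kappa$ forces that automorphism to fix $\iota(V(\Delta))$ pointwise. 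Writing $c(\bbT) := |\operatorname{Aut}(\Theta)_{\iota(V(\Delta))}|$ for the order of that pointwise stabilizer, and $N(\Gamma, \bbT, \kappa)$ for the number of induced subgraphs of $\Gamma$ containing $\kappa(V(\Delta))$ that realize the marked isomorphism type encoded by $\bbT$, this orbit--stabilizer observation gives the identity
\[
\#(\Gamma, \bbT, \kappa) = c(\bbT) \cdot N(\Gamma, \bbT, \kappa).
\]
Since $c(\bbT)$ does not depend on $\kappa$, $\bbT$-regularity of $\Gamma$ is equivalent to the statement that $N(\Gamma, \bbT, \kappa)$ is independent of $\kappa$.

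Finally I would match the data on both sides. An embedding $\kappa$ whose basis $\Delta$ has order $2$ is the same as an ordered pair $(x, y)$ of distinct vertices of $\Gamma$ whose adjacency type coincides with $\Delta$ (so $\Delta$ is either $K_2$ or $\overline{K_2}$); if $\Delta$ has order $1$, $\kappa$ selects a single vertex; and if $\Delta$ has order $0$, the condition is vacuous. Hence the constancy of $N(\Gamma, \bbT, \kappa)$ across all such $\kappa$, ranging over all graph-types of order $(k, l)$ with $k \le 2$ and $l \le t$, is exactly the classical assertion that the number of induced subgraphs of $\Gamma$ of order $\le t$ through a pair of vertices of a given isomorphism type depends only on whether $x = y$, $\{x, y\} \in E$, or $\{x, y\} \notin E$. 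By the proportionality of the previous paragraph, this is in turn equivalent to $\bbT$-regularity for every such $\bbT$, i.e., to $(2, t)$-regularity. The only delicate point is the uniformity of the fiber count $c(\bbT)$, which reduces to the routine orbit--stabilizer step, so no real obstacle is expected.
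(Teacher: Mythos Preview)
Your argument is correct. The paper itself gives no proof of this lemma (it is marked with \qed\ and treated as immediate from the definitions), so there is nothing to compare against beyond the observation that you have written out explicitly the dictionary the author regards as self-evident: embeddings $\kappa:\Delta\injto\Gamma$ with $|V(\Delta)|\le 2$ correspond to (ordered) pairs $(x,y)$ of the appropriate relation type, extensions $\hat\kappa$ correspond to induced subgraphs of the given marked type through $(x,y)$, and the orbit--stabilizer factor $c(\bbT)$ makes the two counts proportional. That is exactly the intended content of the \qed.
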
 
\begin{proof}
	Clear.
\end{proof}

\subsection{Composition of graph types}
%Let us now see how we may construct new graph types from given ones:
\begin{definition}
	Let $\bbT_1=(\Delta_1,\iota_1,\Theta_1)$ and $\bbT_2=(\Delta_2,\iota_2,\Theta_2)$ be graph types, and let $e\colon \Delta_2\injto\Theta_1$.

	Let $\Lambda$ be a graph, $\lambda_1\colon \Theta_1\injto\Lambda$, $\lambda_2\colon \Theta_2\injto\Lambda$ such that the following is a pushout square (cf.~Definition~\ref{defpushout}):
	\begin{equation*}
	\begin{tikzcd}
 	\Theta_2 \rar[hook]{\lambda_2}& \Lambda \\
 	\Delta_2\uar[hook]{\iota_2}\rar[hook]{e} & \Theta_1\uar[hook]{\lambda_1}
	\end{tikzcd}
	\end{equation*}
	Then the graph type $(\Delta_1,\lambda_1\circ\iota_1,\Lambda)$ is called the \emph{free sum of $\bbT_1$ and $\bbT_2$  with respect to $e$}. It is  denoted by $\mathbb{T}_1 \oplus_e \mathbb{T}_2$.
\end{definition}
\begin{remark} 
	The following picture illustrates the construction of a free sum of types:
	\begin{center}
	\begin{tikzpicture}[scale=0.4]
    \tkzDefPoints{-1/0/A,1/0/B,0/1/C1,0/-1/C2}
    \tkzDrawArc[ultra thick](C2,A)(B)
    \tkzDrawArc[ultra thick](C2,B)(A)
    \tkzDefPoints{-1/-2/X, 1/-2/Y, 0/-4/C3}
    \tkzDrawArc[ultra thick](C3,Y)(X)
    \node at (0,-3.5) {$\bbT_1$};
	\end{tikzpicture}\qquad
	\begin{tikzpicture}[scale=0.4]
    \tkzDefPoints{-1/0/A,1/0/B,0/1/C1,0/-1/C2}
    \tkzDrawArc[ultra thick](C1,B)(A)
    \tkzDrawArc[ultra thick](C1,A)(B)
    \tkzDrawArc[color=lightgray](C2,A)(B)
    \tkzDrawArc[ultra thick](C2,B)(A)
    \tkzDefPoints{-1/-2/X, 1/-2/Y, 0/-4/C3}
    \tkzDrawArc[color=lightgray](C3,Y)(X)
    \node at (0,-3.5) {$\bbT_2$};
	\end{tikzpicture}\qquad
	\begin{tikzpicture}[scale=0.4]
    \tkzDefPoints{-1/0/A,1/0/B,0/1/C1,0/-1/C2}
    \tkzDrawArc[ultra thick](C1,B)(A)
    \tkzDrawArc[color=lightgray](C1,A)(B)
    \tkzDrawArc[ultra thick](C2,A)(B)
    \tkzDrawArc[color=lightgray](C2,B)(A)
    \tkzDefPoints{-1/-2/X, 1/-2/Y, 0/-4/C3}
    \tkzDrawArc[ultra thick](C3,Y)(X)
    \node at (0,-3.5) {$\mathbb{T}_1\oplus_e\bbT_2$};
	\end{tikzpicture}
	\end{center}
	In the picture on the left we see $\mathbb{T}_1$. In the picture in the middle we see $\mathbb{T}_2$, and how $\Delta_2$ is embedded by $e$ into $\Theta_1$. In the picture on the right we see how $\Theta_1$ and $\Theta_2$ are glued together along $\Delta_2$, to obtain $\Lambda$. Now, $\Delta_1$ still naturally embeds into $\Lambda$ and we obtain the free sum of the types with respect to $e$. 
\end{remark}
\begin{example}\label{excomp}
	Let us consider the graph types $\bbT_1=(\Delta_1,\iota_1,\Theta_1)$ and $\bbT_2=(\Delta_2,\iota_2,\Theta_2)$ given by the following pictures:
	\[
	\bbT_1\colon
	\begin{tikzpicture}[baseline={(current bounding box.center)}]
	\SetGraphUnit{1}
    \GraphInit[vstyle=Welsh]
    \renewcommand*{\VertexSmallMinSize}{2pt}
    \SetVertexMath
    \Vertex[Lpos=-135]{x}
    \EA[Lpos=-45](x){y}
    \NO[Lpos=180](x){z}
    \Edges(x,z,y,x)
    \AddVertexColor{black}{x,y}
	\end{tikzpicture}\qquad \bbT_2\colon
	\begin{tikzpicture}[baseline={(current bounding box.center)}]
	\SetGraphUnit{1}
    \GraphInit[vstyle=Welsh]
    \renewcommand*{\VertexSmallMinSize}{2pt}
    \SetVertexMath
    \Vertex[Lpos=-135]{u}
    \EA[Lpos=-45](x){v}
    \NO(y){w}
    \Edges(u,w,v,u)
    \AddVertexColor{black}{u,v}
	\end{tikzpicture}
	\]	
	Let $e\colon\Delta_2\injto\Theta_1$ be given by
	\[
	e\colon u\mapsto z,\quad v\mapsto y
	\]
	To obtain the free sum of $\mathbb{T}_1$ and $\mathbb{T}_2$ with respect to $e$, we have to take the disjoint union of $\Theta_1$ and $\Theta_2$, and to identify $u$ with $z$ and $v$ with $y$. We end up with the graph $\Lambda$ in the following picture:
	\[
	\Lambda\colon\begin{tikzpicture}[baseline={(current bounding box.center)}]
	\SetGraphUnit{1}
    \GraphInit[vstyle=Welsh]
    \renewcommand*{\VertexSmallMinSize}{2pt}
    \SetVertexMath
    \Vertex[Lpos=-135,L={\{x\}}]{x}
    \EA[Lpos=-45,L={\{y,v\}}](x){y}
    \NO[Lpos=180,L={\{z,u\}}](x){z}
    \NO[L={\{w\}}](y){w}
    \Edges(z,x,y,z,w,y)
	\end{tikzpicture}
	\]
	Finally, we have $\mathbb{T}_1\oplus_e\mathbb{T}_2=(\Delta_1,\iota,\Lambda)$, where $\iota\colon \Delta_1\injto\Lambda$ is given by $\iota\colon x\mapsto \{x\}, y\mapsto\{y,v\}$. If we forget about the labelling, we obtain:
	\[
	\mathbb{T}_1\oplus_e\mathbb{T}_2\colon\begin{tikzpicture}[baseline={(current bounding box.center)}]
	\SetGraphUnit{1}
    \GraphInit[vstyle=Welsh]
    \renewcommand*{\VertexSmallMinSize}{2pt}
    \SetVertexMath
    \Vertex[Lpos=-135,NoLabel]{x}
    \EA[Lpos=-45,NoLabel](x){y}
    \NO[NoLabel](x){z}
    \NO[NoLabel](y){w}
    \Edges(z,x,y,z,w,y)
    \AddVertexColor{black}{x,y}
	\end{tikzpicture}
	\]
\end{example}

\subsection{Decomposition of graph types}
\begin{definition}
	Let $\bbT,\bbT_2$ be graph types. We say that $\bbT$ is \emph{$\bbT_2$-reducible} if $\bbT\cong\bbT_1\oplus_e\bbT_2$ for some $\bbT_1\ncong\bbT$ and for some $e$.   
\end{definition}
\begin{remark}\label{remdecomp}
	With the notions from above $\bbT=(\Delta,\iota,\Theta)$ is $\bbT_2$-reducible if and only if the set $V(\Theta)$ can be decomposed as a disjoint union of subsets $M_1$, $M_2$, and $M_3$, such that
    \begin{enumerate}
    \item $\im(\iota)\subseteq M_1\cup M_3$,
    \item $M_2\neq\emptyset$
    \item there are no edges in $\Theta$ between vertices from $M_1$ and vertices from $M_2$,
    \item $\bbT_2':=(\Theta(M_3),\iota',\Theta(M_2\cup M_3))\cong\bbT_2$ (here $\iota'$ denotes the identical embedding).
    \end{enumerate}
	\[
	\begin{tikzpicture}[scale=0.4]
    \tkzDefPoints{-1/0/A,1/0/B,0/1/C1,0/-1/C2}
    \tkzDrawArc[ultra thick](C1,B)(A)
    \tkzDrawArc[ultra thick](C1,A)(B)
    \tkzDrawArc[ultra thick](C2,A)(B)
    \tkzDrawArc[ultra thick](C2,B)(A)
    \tkzDefPoints{-1/-2/X, 1/-2/Y, 0/-4/C3}
	\node [pin=60:$M_2$] at (0.5,0.8){};
	\node [pin=0:$M_3$] at (0,0){};
	\node [pin=-60:$M_1$] at (0.5,-0.8){};
	\end{tikzpicture}
	\]
    In this case, we have $\bbT_1=(\Delta,\iota,\Theta(M_1\cup M_3))$ and $\bbT\cong \bbT_1\oplus_e\bbT_2'$, where $e$ is the identical embedding of $M_3$ into $M_1\cup M_3$.  
\end{remark}
\begin{remark}
	In Example~\ref{excomp}, $\bbT_1\oplus_e\bbT_2$ is $\bbT_2$-reducible.  Beware that there are some degenerate forms of reducibility that we need to take care of: Every graph type $\bbT=(\Delta,\iota,\Theta)$ is $\bbT$-reducible, since $\bbT\cong\bbT_\Delta\oplus_{1_\Delta}\bbT$, where $\bbT_\Delta=(\Delta,1_\Delta,\Delta)$ (here $1_\Delta$ denotes the identity on $V(\Delta)$). Note that whenever $\bbT\cong\bbT'\oplus_e\bbT$ for some $\bbT'$ and some $e$, then $\bbT'\cong\bbT_\Delta$ and $e$ is an isomorphism of $\Delta$ to the base-graph of $\bbT'$.  
\end{remark}

\begin{definition}
    A graph type $\bbT$ is called \emph{$(m,n)$-irreducible} if whenever $\bbT\cong\bbT_1\oplus_e\bbT_2$ for a graph type $\bbT_1$ and a graph type $\bbT_2$, where $\bbT_2$ is of order $(k,l)$ with $k\le m$ and $l\le n$, then we already have $\bbT\cong\bbT_1$ or $\bbT\cong\bbT_2$. Otherwise, we call $\bbT$ \emph{$(m,n)$-reducible}
\end{definition}

\begin{remark}\label{remred}
    A graph type $\bbT$ is $(m,n)$-reducible if and only if it is $\bbT'$-reducible, for some graph type $\bbT'$ of order $(k,l)$, where $k\le m$ and $l\le n$, such that $\bbT'\ncong\bbT$.   
\end{remark}

\begin{example}
	Consider the following graph type of order $(1,4)$: 
	\[
	\begin{tikzpicture}
	\SetGraphUnit{1}
	    \GraphInit[vstyle=Welsh]
	    \renewcommand*{\VertexSmallMinSize}{2pt}
	    \SetVertexMath
	    \Vertex[Lpos=-135]{x}
	    \EA[Lpos=-45](x){y}
	    \NO[Lpos=180](x){u}
	    \NO(y){v}
	    \Edges(v,x,u,v,y,u)
	    \Edges(x,y)
	    \AddVertexColor{black}{x}
	\end{tikzpicture}
	\]
	It is $(2,4)$-reducible, since 
	\[
	\begin{tikzpicture}[baseline=2.5ex]
	\SetGraphUnit{1}
	    \GraphInit[vstyle=Welsh]
	    \renewcommand*{\VertexSmallMinSize}{2pt}
	    \SetVertexMath
	    \Vertex[Lpos=-135]{x}
	    \EA[Lpos=-45](x){y}
	    \NO[Lpos=180](x){u}
	    \NO(y){v}
	    \Edges(v,x,u,v,y,u)
	    \Edges(x,y)
	    \AddVertexColor{black}{x}
	\end{tikzpicture}
	\cong
	\begin{tikzpicture}[baseline=-1ex]
	    \SetGraphUnit{1}
	    \GraphInit[vstyle=Welsh]
	    \renewcommand*{\VertexSmallMinSize}{2pt}
	    \SetVertexMath
	    \Vertex[Lpos=-135]{x}
	    \EA[Lpos=-45](x){y}
	    \Edge(x)(y)
	    \AddVertexColor{black}{x}
	\end{tikzpicture}\oplus_e
	\begin{tikzpicture}[baseline=2.5ex]
		\SetGraphUnit{1}
	    \GraphInit[vstyle=Welsh]
	    \renewcommand*{\VertexSmallMinSize}{2pt}
	    \SetVertexMath
	    \Vertex[Lpos=-135,L={x'}]{x}
	    \EA[Lpos=-45,L={y'}](x){y}
	    \NO[Lpos=180](x){u}
	    \NO[Lpos=0](y){v}
	    \Edges(v,x,u,v,y,u)
	    \Edges(x,y)
	    \AddVertexColor{black}{x,y}
	 \end{tikzpicture}
	\]
	Moreover, the graph type  
	\[
	\begin{tikzpicture}
	\SetGraphUnit{1}
	    \GraphInit[vstyle=Welsh]
	    \renewcommand*{\VertexSmallMinSize}{2pt}
	    \SetVertexMath
	    \Vertex[Lpos=-135]{x}
	    \EA[Lpos=-45](x){y}
	    \NO[Lpos=180](x){u}
	    \NO[Lpos=0](y){v}
	    \Edges(x,u,y,x,v,y)
	    \AddVertexColor{black}{x,y}
	\end{tikzpicture}
	\]
	of order $(2,4)$ is $(2,3)$-reducible (and hence also $(2,4)$-reducible), because
	\[
	\begin{tikzpicture}[baseline=2.5ex]
	\SetGraphUnit{1}
	    \GraphInit[vstyle=Welsh]
	    \renewcommand*{\VertexSmallMinSize}{2pt}
	    \SetVertexMath
	    \Vertex[Lpos=-135]{x}
	    \EA[Lpos=-45](x){y}
	    \NO[Lpos=180](x){u}
	    \NO[Lpos=0](y){v}
	    \Edges(x,u,y,x,v,y)
	    \AddVertexColor{black}{x,y}
	\end{tikzpicture}
	\cong
	\begin{tikzpicture}[baseline=2.5ex]
	\SetGraphUnit{1}
	    \GraphInit[vstyle=Welsh]
	    \renewcommand*{\VertexSmallMinSize}{2pt}
	    \SetVertexMath
	    \Vertex[Lpos=-135]{x}
	    \EA[Lpos=-45](x){y}
	    \NO[Lpos=180](x){u}
	    \Edges(x,u,y,x)
	    \AddVertexColor{black}{x,y}
	\end{tikzpicture}\oplus_e
	\begin{tikzpicture}[baseline=2.5ex]
	\SetGraphUnit{1}
	    \GraphInit[vstyle=Welsh]
	    \renewcommand*{\VertexSmallMinSize}{2pt}
	    \SetVertexMath
	    \Vertex[Lpos=-135,L={x'}]{x}
	    \EA[Lpos=-45,L={y'}](x){y}
	    \NO[Lpos=0](y){v}
	    \Edges(x,v,y,x)
	    \AddVertexColor{black}{x,y}
	\end{tikzpicture}
	\]
	In both examples, $e\colon x'\mapsto x, y'\mapsto y$.
\end{example}

In the following it is our goal to link the concept  of $(m,n)$-reducibility to classical graph-theoretical terms.
\begin{definition}
    Let $\bbT=(\Delta,\iota,\Theta)$ be a graph type of order $(m,n)$. Suppose $\Theta=(T,E)$. Let $S\subseteq T$ be the image of $\iota$. Then we define $\Cl(\bbT)$ to be the graph with vertex set $T$ and with edge set $E\cup \binom{S}{2}$.
\end{definition}
\begin{example}
	\[
	\bbT\colon\begin{tikzpicture}[baseline={(current bounding box.center)}]
		\SetGraphUnit{1}
	    \GraphInit[vstyle=Welsh]
	    \renewcommand*{\VertexSmallMinSize}{2pt}
	    \SetVertexMath
	    \SetVertexNoLabel
   		\Vertex[a=0*60-150,d=1]{x}
   		\Vertex[a=1*60-150,d=1]{y}
   		\Vertex[a=2*60-150,d=1]{z}
   		\Vertex[a=3*60-150,d=1]{u}
   		\Vertex[a=4*60-150,d=1]{v}
   		\Vertex[a=5*60-150,d=1]{w}
		\Edges(x,z,v,x)
		\Edges(y,u,w,y)
		\AddVertexColor{black}{x,y,z}
	   \end{tikzpicture}\qquad
	   \Cl(\bbT)\colon
	   \begin{tikzpicture}[baseline={(current bounding box.center)}]
		\SetGraphUnit{1}
	    \GraphInit[vstyle=Welsh]
	    \renewcommand*{\VertexSmallMinSize}{2pt}
	    \SetVertexMath
	    \SetVertexNoLabel
   		\Vertex[a=0*60-150,d=1]{x}
   		\Vertex[a=1*60-150,d=1]{y}
   		\Vertex[a=2*60-150,d=1]{z}
   		\Vertex[a=3*60-150,d=1]{u}
   		\Vertex[a=4*60-150,d=1]{v}
   		\Vertex[a=5*60-150,d=1]{w}
		\Edges(x,z,v,x)
		\Edges(y,u,w,y)
		\Edges(x,y,z)
	   \end{tikzpicture} 
	\]
\end{example}

Recall that a graph $\Gamma$ is called \emph{$k$-decomposable} if there exists
a $k$-element set of vertices whose deletion makes the graph
disconnected. Moreover, $\Gamma$ is called \emph{$(k+1)$-connected} if it is $l$-indecomposable, for all $l\in\{0,\dots,k\}$.

\begin{lemma}\label{connectcond}
    Let $m+2\le n$. A graph type  $\bbT=(\Delta,\iota,\Theta)$ of order  $(m,n)$ is $(m,n)$-irreducible if and only if $\Cl(\bbT)$ is $(m+1)$-connected. 
\end{lemma}
\begin{proof}
  ``$\Leftarrow$:''  Suppose that $\bbT=(\Delta,\iota,\Theta)$  is $(m,n)$-reducible. Then $\bbT$ is $\bbT'$-reducible for some graph type $\bbT'$ of order $(k,l)$, where $k\le m$ and $l\le n$, such that $\bbT'\ncong\bbT$ (\cf Remark~\ref{remred}). Let us fix some such graph type $\bbT'$. Like in Remark~\ref{remdecomp} we may decompose $V(\Theta)$ as a disjoint union of subsets $M_1$, $M_2$, $M_3$, such that  $\im(\iota)\subseteq M_1\cup M_2$, $M_2\neq\emptyset$, there are no edges in $\Theta$ between vertices from $M_1$ and $M_2$, and such that $\bbT'\cong\bbT'':=(\Theta(M_3),\iota'',\Theta(M_2\cup M_3))$, where $\iota''$ is the identical embedding. We claim that $M_1$ cannot be empty. Suppose on the contrary that $M_1=\emptyset$. Then $\im(\iota)\subseteq M_3$. Since $|\im(\iota)|=m\ge k$ and since $|M_3|=k$, it follows that $\im(\iota)=M_3$. Let $\tilde\iota$ be the image-restriction of $\iota$ to $\Theta(M_3)$. Then, evidently, the following diagram commutes:
  \[
  		\begin{tikzcd}
		\Delta \arrow[r,hook,"\iota"]\arrow[d,hook',"\tilde\iota"',"\cong"]& \Theta\arrow[d,"="]\\
		\Theta(M_3)\arrow[r,hook,"\iota''", "="'] & \Theta(M_2\cup M_3).
	\end{tikzcd}
  \]
  In other words, $(\tilde\iota,1_\Theta)$ is an isomorphism from $\bbT$ to $\bbT''$, a contradiction.

Now we observe that in $\Cl(\bbT)$ there are still no edges between vertices from $M_1$ and vertices from $M_2$, since only edges between vertices in $\im(\iota)\subseteq M_1$ are added in the course of the construction of $\Cl(\bbT)$. Thus, removing the $k$  vertices of $M_3$ from $\Cl(\bbT)$ makes the remainder disconnected. It follows that $\Cl(\bbT)$ is $k$-decomposable. Consequently, $\Cl(\bbT)$ is not $(m+1)$-connected.

``$\Rightarrow$:'' Suppose that $\hat\Theta:=\Cl(\bbT)$ is not $(m+1)$-connected. Then there exists some $k\le m$ such that  $\hat\Theta$ is $k$-decomposable. Thus, there exists pairwise disjoint subsets $M_1$, $M_2$, $M_3$ of $V(\hat\Theta)$, such that $M_1\cup M_2\cup M_3=V(\hat\Theta)$, $M_1,M_2\neq\emptyset$, $|M_3|=k$, and such that there are no edges in $\hat\Theta$ between vertices from $M_1$ and vertices from $M_2$. Thus, if $M\subseteq V(\Theta)$ denotes the image of $\iota$, then we have $M\subseteq M_1\cup M_3$ or $M\subseteq M_2\cup M_3$. Without loss of generality assume that $M\subseteq M_1\cup M_3$. Let $\Delta':=\Theta(M_3)$, and let $\Theta':=\Theta(M_2\cup M_3)$. Then, with $\bbT'=(\Delta',\iota',\Theta')$ (where $\iota'$ is the identical embedding), we obtain that $\bbT$ is $\bbT'$-reducible (\cf Remark~\ref{remdecomp}). By construction we have that $\bbT'$ is of order $(k,l)$, where $l=|M_2\cup M_3|<n$. Thus $\bbT'\ncong\bbT$. Consequently, $\bbT$ is $(m,n)$-reducible (\cf Remark~\ref{remred}). 
\end{proof}

\begin{example}\label{exHesHig}
    The only $3$-connected graph of order $4$ is the complete graph $K_4$. Thus, the only $(2,4)$-irreducible graph types of order $(2,4)$ are:
\[
\begin{tikzpicture}
\SetGraphUnit{1}
    \GraphInit[vstyle=Welsh]
    \renewcommand*{\VertexSmallMinSize}{2pt}
    \SetVertexMath
    \Vertex[Lpos=-135,NoLabel]{x}
    \EA[Lpos=-45,NoLabel](x){y}
    \NO[NoLabel](x){u}
    \NO[NoLabel](y){v}
    \Edges(v,x,u,v,y,u)
    \Edges(x,y)
    \AddVertexColor{black}{x,y}
\end{tikzpicture}\qquad
\begin{tikzpicture}
\SetGraphUnit{1}
    \GraphInit[vstyle=Welsh]
    \renewcommand*{\VertexSmallMinSize}{2pt}
    \SetVertexMath
    \Vertex[Lpos=-135,NoLabel]{x}
    \EA[Lpos=-45,NoLabel](x){y}
    \NO[NoLabel](x){u}
    \NO[NoLabel](y){v}
    \Edges(v,x,u,v,y,u)
    \AddVertexColor{black}{x,y}
\end{tikzpicture}
\]
\end{example}

\subsection{The dominance quasiorder of graph types}
\begin{definition}
    Let $\bbT_1=(\Delta_1,\iota_1,\Theta_1)$, $\bbT_2= (\Delta_2,\iota_2,\Theta_2)$ be graph types. Then we define $\bbT_1\preceq\bbT_2$ ($\bbT_2$ \emph{dominates} $\bbT_1$) if there exists a morphism $(f,g)\colon \bbT_2\to\bbT_1$ such that $f\colon \Delta_2\to\Delta_1$ is an isomorphism and such that $g\colon \Theta_2\to\Theta_1$ is surjective on vertices.  If, in addition,  $g$ is not an isomorphism, then we write $\bbT_1\prec\bbT_2$.
\end{definition}
\begin{lemma}
     The relation $\preceq$ defines a quasiorder on graph types. For finite graph types $\bbT_1$, $\bbT_2$ we have $\bbT_1\cong\bbT_2$ if and only if $\bbT_1\preceq\bbT_2$ and $\bbT_2\preceq\bbT_1$. 
\end{lemma}
\begin{proof}
    Clear. 
\end{proof}
\begin{example}
	In the picture below the order diagram of the domination quasiorder of all graph types of order $(2,k)$ for $2\le k\le 4$ with base graph $\Delta$ isomorphic to $K_2$ can be found (in this diagram, a graph type $\bbT_2$ dominates a graph type $\bbT_1$ iff $\bbT_2$ can be reached by an upwards-sloped path starting from $\bbT_1$).  
	\newcommand{\cpunit}{0.8}
	\[\scalebox{0.75}{
	\begin{tikzpicture}[scale=0.8,baseline={(current bounding box.center)},outer/.style={rounded corners,draw}]
		\node[outer](a0) at (12,0){%
			\begin{tikzpicture}
			\SetGraphUnit{\cpunit}
	   	 	\GraphInit[vstyle=Welsh]
    		\renewcommand*{\VertexSmallMinSize}{2pt}
    		\SetVertexMath
    		\Vertex[NoLabel,Lpos=180]{x}
    		\EA[NoLabel,Lpos=0](x){y}
    		\Edge(x)(y)
		    \AddVertexColor{black}{x,y}
			\end{tikzpicture}};
		\node[outer](a1) at (9,0){%
			\begin{tikzpicture}
			\SetGraphUnit{\cpunit}
	   	 	\GraphInit[vstyle=Welsh]
    		\renewcommand*{\VertexSmallMinSize}{2pt}
    		\SetVertexMath
    		\Vertex[NoLabel,Lpos=180]{x}
    		\EA[Lpos=0,NoLabel](x){y}
    		\NO[NoLabel](x){u}
    		\Edges(x,u,y,x)
		    \AddVertexColor{black}{x,y}
			\end{tikzpicture}};
		\node[outer](a2) at (3,0){%
			\begin{tikzpicture}
			\SetGraphUnit{\cpunit}
    		\GraphInit[vstyle=Welsh]
    		\renewcommand*{\VertexSmallMinSize}{2pt}
    		\SetVertexMath
    		\Vertex[NoLabel,Lpos=180]{x}
    		\EA[Lpos=0,NoLabel](x){y}
    		\NO[NoLabel](x){u}
    		\NO[NoLabel](y){v}
    		\Edges(x,u,y,x,v,y)
    		\Edge(u)(v)
		    \AddVertexColor{black}{x,y}
			\end{tikzpicture}};
		\node[outer](a3) at (3,3){%
			\begin{tikzpicture}
			\SetGraphUnit{\cpunit}
    		\GraphInit[vstyle=Welsh]
    		\renewcommand*{\VertexSmallMinSize}{2pt}
    		\SetVertexMath
    		\Vertex[Lpos=180,NoLabel]{x}
    		\EA[Lpos=0,NoLabel](x){y}
    		\NO[NoLabel](x){u}
    		\NO[NoLabel](y){v}
    		\Edges(x,u,y,x,v,y)
		    \AddVertexColor{black}{x,y}
		\end{tikzpicture}};
		\node[outer](a4) at (6,3){%
			\begin{tikzpicture}
			\SetGraphUnit{\cpunit}
    		\GraphInit[vstyle=Welsh]
    		\renewcommand*{\VertexSmallMinSize}{2pt}
    		\SetVertexMath
    		\Vertex[NoLabel,Lpos=180]{x}
    		\EA[Lpos=0,NoLabel](x){y}
    		\NO[NoLabel](x){u}
    		\NO[NoLabel](y){v}
    		\Edges(y,u,x,y,v,u)
		    \AddVertexColor{black}{x,y}
		\end{tikzpicture}};
		\node[outer](a7) at (1.5,6){%
			\begin{tikzpicture}
			\SetGraphUnit{\cpunit}
    		\GraphInit[vstyle=Welsh]
    		\renewcommand*{\VertexSmallMinSize}{2pt}
    		\SetVertexMath
    		\Vertex[NoLabel,Lpos=180]{x}
    		\EA[Lpos=0,NoLabel](x){y}
    		\NO[NoLabel](x){u}
    		\NO[NoLabel](y){v}
    		\Edges(u,x,y,v,x)
		    \AddVertexColor{black}{x,y}
		\end{tikzpicture}};
		\node[outer](a8) at (4.5,6){%
			\begin{tikzpicture}
			\SetGraphUnit{\cpunit}
    		\GraphInit[vstyle=Welsh]
    		\renewcommand*{\VertexSmallMinSize}{2pt}
    		\SetVertexMath
    		\Vertex[NoLabel,Lpos=180]{x}
    		\EA[Lpos=0,NoLabel](x){y}
    		\NO[NoLabel](x){u}
    		\NO[NoLabel](y){v}
    		\Edges(u,v,x,y,v)
		    \AddVertexColor{black}{x,y}
		\end{tikzpicture}};
		\node[outer](a9) at (7.5,6){%
			\begin{tikzpicture}
			\SetGraphUnit{\cpunit}
    		\GraphInit[vstyle=Welsh]
    		\renewcommand*{\VertexSmallMinSize}{2pt}
    		\SetVertexMath
    		\Vertex[NoLabel,Lpos=180]{x}
    		\EA[Lpos=0,NoLabel](x){y}
    		\NO[NoLabel](x){u}
    		\NO[NoLabel](y){v}
    		\Edges(x,y,u,v,y)
		    \AddVertexColor{black}{x,y}
		\end{tikzpicture}};
		\node[outer](a10) at (10.5,6){%
			\begin{tikzpicture}
			\SetGraphUnit{\cpunit}
    		\GraphInit[vstyle=Welsh]
    		\renewcommand*{\VertexSmallMinSize}{2pt}
    		\SetVertexMath
    		\Vertex[NoLabel,Lpos=180]{x}
    		\EA[Lpos=0,NoLabel](x){y}
    		\NO[NoLabel](x){u}
    		\NO[NoLabel](y){v}
    		\Edges(x,y,v,u,x)
		    \AddVertexColor{black}{x,y}
		\end{tikzpicture}};
		\node[outer](a6) at (8.5,3){%
			\begin{tikzpicture}
			\SetGraphUnit{\cpunit}
    		\GraphInit[vstyle=Welsh]
    		\renewcommand*{\VertexSmallMinSize}{2pt}
    		\SetVertexMath
    		\Vertex[NoLabel,Lpos=180]{x}
    		\EA[Lpos=0,NoLabel](x){y}
    		\NO[NoLabel](x){u}
    		\Edges(u,x,y)
		    \AddVertexColor{black}{x,y}
		\end{tikzpicture}};
		\node[outer](a11) at (1.5,9){%
			\begin{tikzpicture}
			\SetGraphUnit{\cpunit}
    		\GraphInit[vstyle=Welsh]
    		\renewcommand*{\VertexSmallMinSize}{2pt}
    		\SetVertexMath
    		\Vertex[NoLabel,Lpos=180]{x}
    		\EA[Lpos=0,NoLabel](x){y}
    		\NO[NoLabel](x){u}
    		\NO[NoLabel](y){v}
    		\Edges(x,y,v,x)
		    \AddVertexColor{black}{x,y}
		\end{tikzpicture}};
		\node[outer](a12) at (4.5,9){%
			\begin{tikzpicture}
			\SetGraphUnit{\cpunit}
    		\GraphInit[vstyle=Welsh]
    		\renewcommand*{\VertexSmallMinSize}{2pt}
    		\SetVertexMath
    		\Vertex[NoLabel,Lpos=180]{x}
    		\EA[Lpos=0,NoLabel](x){y}
    		\NO[NoLabel](x){u}
    		\NO[NoLabel](y){v}
    		\Edges(u,y,v)
    		\Edge(x)(y)
		    \AddVertexColor{black}{x,y}
		\end{tikzpicture}};
		\node[outer](a13) at (7.5,9){%
			\begin{tikzpicture}
			\SetGraphUnit{\cpunit}
    		\GraphInit[vstyle=Welsh]
    		\renewcommand*{\VertexSmallMinSize}{2pt}
    		\SetVertexMath
    		\Vertex[NoLabel,Lpos=180]{x}
    		\EA[Lpos=0,NoLabel](x){y}
    		\NO[NoLabel](x){u}
    		\NO[NoLabel](y){v}
    		\Edges(u,x,y,v)
		    \AddVertexColor{black}{x,y}
		\end{tikzpicture}};
		\node[outer](a14) at (10.5,9){%
			\begin{tikzpicture}
			\SetGraphUnit{\cpunit}
    		\GraphInit[vstyle=Welsh]
    		\renewcommand*{\VertexSmallMinSize}{2pt}
    		\SetVertexMath
    		\Vertex[NoLabel,Lpos=180]{x}
    		\EA[Lpos=0,NoLabel](x){y}
    		\NO[NoLabel](x){u}
    		\NO[NoLabel](y){v}
    		\Edges(u,v,y,x)
		    \AddVertexColor{black}{x,y}
		\end{tikzpicture}};
		\node[outer](a15) at (13.5,9){%
			\begin{tikzpicture}
			\SetGraphUnit{\cpunit}
    		\GraphInit[vstyle=Welsh]
    		\renewcommand*{\VertexSmallMinSize}{2pt}
    		\SetVertexMath
    		\Vertex[NoLabel,Lpos=180]{x}
    		\EA[Lpos=0,NoLabel](x){y}
    		\NO[NoLabel](x){u}
    		\Edges(x,y)
		    \AddVertexColor{black}{x,y}
		\end{tikzpicture}};
		\node[outer](a17) at (4.5,12){%
			\begin{tikzpicture}
			\SetGraphUnit{\cpunit}
    		\GraphInit[vstyle=Welsh]
    		\renewcommand*{\VertexSmallMinSize}{2pt}
    		\SetVertexMath
    		\Vertex[NoLabel,Lpos=180]{x}
    		\EA[Lpos=0,NoLabel](x){y}
    		\NO[NoLabel](x){u}
    		\NO[NoLabel](y){v}
    		\Edges(x,y)
    		\Edge(u)(v)
		    \AddVertexColor{black}{x,y}
		\end{tikzpicture}};
		\node[outer](a16) at (10.5,12){%
			\begin{tikzpicture}
			\SetGraphUnit{\cpunit}
    		\GraphInit[vstyle=Welsh]
    		\renewcommand*{\VertexSmallMinSize}{2pt}
    		\SetVertexMath
    		\Vertex[NoLabel,Lpos=180]{x}
    		\EA[Lpos=0,NoLabel](x){y}
    		\NO[NoLabel](x){u}
    		\NO[NoLabel](y){v}
    		\Edges(x,y,v)
		    \AddVertexColor{black}{x,y}
		\end{tikzpicture}};
		\node[outer](a18) at (7.5,14){%
			\begin{tikzpicture}
			\SetGraphUnit{\cpunit}
    		\GraphInit[vstyle=Welsh]
    		\renewcommand*{\VertexSmallMinSize}{2pt}
    		\SetVertexMath
    		\Vertex[NoLabel,Lpos=180]{x}
    		\EA[Lpos=0,NoLabel](x){y}
    		\NO[NoLabel](x){u}
    		\NO[NoLabel](y){v}
    		\Edges(x,y)
		    \AddVertexColor{black}{x,y}
		\end{tikzpicture}};
		\Edge(a2)(a3)
		\Edge(a2)(a4) 
		\Edge[style={bend left=20}](a1)(a3)
		\Edge(a1)(a4)
		\Edge(a3)(a7)
		\Edge(a4)(a7)
		\Edge(a4)(a8)
		\Edge(a4)(a9)
		\Edge(a4)(a10)
		\Edge(a0)(a6)
		\Edge(a1)(a6)
		\draw [thick, out=150, in=-55] (a6) to (a12);
		\draw [thick, out=20, in=-90] (a6) to (a15);
		\Edge(a6)(a10)
		\Edge(a7)(a11)
		\Edge(a7)(a12)
		\draw [thick, out=35, in=-135](a7) to (a13);
		\Edge(a8)(a11)
		\draw [thick, out=35, in=-145](a8) to (a14);
		\Edge(a9)(a12)
		\Edge(a9)(a14)
		\Edge(a10)(a13)
		\Edge(a10)(a14)
		\draw [thick, out=35, in=-155](a11) to (a16);
		\draw [thick, out=35, in=-145](a12) to (a16);
		\Edge(a13)(a16)
		\Edge(a14)(a16)
		\draw [thick, out = 135, in=-35](a14) to (a17);
		\Edge(a15)(a16)
		\Edge(a16)(a18)
		\Edge(a17)(a18)
	\end{tikzpicture}}
	\]
\end{example}

\subsection{The type counting lemma}
Now all preparations are made so that we can come to the central auxiliary result of this paper from which all other results depend crucially. It is the place where algebraic graph theory meets category theory. Its proof critically depends on the universal property of amalgamated free sums.
\begin{lemma}[Type counting lemma]\label{type-counting}
    Given a graph $\Gamma$ and graph types $\bbT_1=(\Delta_1,\iota_1,\Theta_1)$ and  $\bbT_2=(\Delta_2,\iota_2,\Theta_2)$. Let $e\colon \Delta_2\injto\Theta_1$ be an embedding.
    Then $\Gamma$ is $\bbT_1\oplus_e\bbT_2$-regular if
    \begin{enumerate}
        \item $\Gamma$ is $\bbT_1$-regular,
        \item $\Gamma$ is $\bbT_2$-regular, and
        \item $\Gamma$ is $\bbT$-regular for every $\bbT\prec\bbT_1\oplus_e\bbT_2$. 
    \end{enumerate}
\end{lemma}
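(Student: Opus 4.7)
The plan is to show $\bbT$-regularity of $\Gamma$, where $\bbT := \bbT_1 \oplus_e \bbT_2 = (\Delta_1, \iota, \Lambda)$ with $\iota = \lambda_1 \circ \iota_1$, by a double-counting argument built on the universal property of the pushout. Fix an embedding $\kappa : \Delta_1 \injto \Gamma$, and let $\mathcal{P}(\kappa)$ denote the set of pairs $(\mu_1, \mu_2)$ where $\mu_1 : \Theta_1 \injto \Gamma$ is an embedding extending $\kappa$ along $\iota_1$ and $\mu_2 : \Theta_2 \injto \Gamma$ is an embedding extending $\mu_1 \circ e$ along $\iota_2$. Using assumption (1) for the choice of $\mu_1$ and assumption (2) for the choice of $\mu_2$ on top of $\mu_1 \circ e$, one immediately obtains $|\mathcal{P}(\kappa)| = \#(\Gamma, \bbT_1) \cdot \#(\Gamma, \bbT_2)$, independent of $\kappa$.

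Next I would partition $\mathcal{P}(\kappa)$ according to how the images sit inside $\Gamma$. To each pair $P = (\mu_1, \mu_2)$ one associates the induced subgraph $\Theta_P := \Gamma(\mu_1(\Theta_1) \cup \mu_2(\Theta_2))$, together with the canonical surjective homomorphism $p_P : \Lambda \epito \Theta_P$ supplied by the pushout universal property (after corestricting to the image). The triple $\bbT_P := (\Delta_1, p_P \circ \iota, \Theta_P)$ is a graph-type, and $(\mathrm{id}_{\Delta_1}, p_P)$ witnesses $\bbT_P \preceq \bbT$. A routine verification using the concrete description of $\Lambda$ as a glueing shows that $p_P$ is an isomorphism of graphs if and only if $\bbT_P \cong \bbT$, and in this extremal case $P$ corresponds bijectively to the embedding $\hat\kappa := \mathrm{incl}_{\Theta_P} \circ p_P : \Lambda \injto \Gamma$ extending $\kappa$ along $\iota$.

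Grouping pairs by the isomorphism class of $\bbT_P$ then yields an identity of the form
\[
\#(\Gamma, \bbT_1) \cdot \#(\Gamma, \bbT_2) \;=\; \#(\Gamma, \bbT, \kappa) \;+\; \sum_{[\bbT'] \prec [\bbT]} c(\bbT') \cdot \#(\Gamma, \bbT', \kappa),
\]
where each coefficient $c(\bbT')$ is a positive integer depending only on $\bbT'$ (and on the fixed data $\bbT_1, \bbT_2, e$), not on $\kappa$. By Lemma~\ref{finitesubtypes} only finitely many classes $[\bbT'] \prec [\bbT]$ contribute, and by assumption (3) each $\#(\Gamma, \bbT', \kappa)$ in the sum is $\kappa$-independent. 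Since the left-hand side is $\kappa$-independent, the term $\#(\Gamma, \bbT, \kappa)$ is forced to be $\kappa$-independent too, proving the lemma.

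The main technical hurdle is setting up the partition cleanly and verifying that the multiplicities $c(\bbT')$ are genuinely $\kappa$-independent invariants of $\bbT'$. Concretely, for a fixed representative $\bbT' = (\Delta_1, \iota', \Theta')$ with $\bbT' \preceq \bbT$, one must show that the pairs in $\mathcal{P}(\kappa)$ inducing a given realization $\hat\kappa' : \Theta' \injto \Gamma$ (extending $\kappa$ along $\iota'$) are in bijection, up to the free action of the stabilizer of $\iota'$ inside $\mathrm{Aut}(\Theta')$, with the surjective homomorphisms $q : \Lambda \epito \Theta'$ satisfying $q \circ \iota = \iota'$ and $q \circ \lambda_j$ an embedding for $j = 1, 2$. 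The count of such $q$ is a purely combinatorial invariant of $\bbT'$, which is exactly what makes the constants $c(\bbT')$ well-defined and independent of $\kappa$.
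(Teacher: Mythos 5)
Your argument is correct and follows essentially the same route as the paper's proof: fix $\kappa$, identify extensions of $\kappa$ along $\iota$ with certain compatible cocones, count all $\kappa$-compatible cocones as $\#(\Gamma,\bbT_1)\cdot\#(\Gamma,\bbT_2)$, pass via the mediating morphism to the dominated type carried by its image, and solve for $\#(\Gamma,\bbT,\kappa)$ using hypothesis (3) together with Lemma~\ref{finitesubtypes}. In fact you are more careful than the paper at the one delicate point: the paper asserts a one-to-one correspondence between cocones and pairs $(i,\hat\kappa)$, i.e.\ that every dominated type occurs with multiplicity $1$, which is false in general (a strictly dominated type $\bbT'$ can admit several admissible surjections $q:\Lambda\epito\Theta'$ over a single embedding of $\Theta'$ into $\Gamma$), whereas your $\kappa$-independent positive integer coefficients $c(\bbT')$, justified by the surjection-versus-stabilizer count, are exactly what the argument needs.
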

Before coming to the proof of the type counting lemma, we  need to prepare a few auxiliary tools:
\begin{definition}\label{quotient}
	Let $\Theta$ and $\Gamma$ be graphs, and let $h\colon\Theta\to\Gamma$ be a graph homomorphism. By $\Theta/h$ we  denote the graph whose vertex set is $V(\Theta)/{\ker h}$ and whose edge set is given by
	\begin{multline*}
	E(\Theta/h):= \{\{M_1,M_2\}\mid M_1,M_2\in V(\Theta/h),\\\{h(m_1),h(m_2)\}\in E(\Gamma), \text{for some } m_1\in M_1, \text{ and } m_2\in M_2\} 
	\end{multline*}
\end{definition}

\begin{lemma}\label{homdecomp}
	Let $h\colon \Theta\to\Gamma$ be a graph homomorphism. Then the natural mapping $\chi_h\colon V(\Theta)\to V(\Theta/h)$ defined by $\chi_h\colon v\mapsto [v]_{\ker h}$ is a surjective graph homomorphism to $\Theta/h$. Moreover, there is a unique graph embedding $\tilde{h}$ from $\Theta/h$ to $\Gamma$ such that $h=\tilde{h}\circ\chi_h$.
\end{lemma}
\begin{proof}
	Straightforward.
\end{proof}

Now we are ready to prove the type counting lemma. The reader is invited to study Example~\ref{gclexample} in parallel:
\begin{proof}[Proof of Lemma~\ref{type-counting}]
    Let us start by fixing some notations. Suppose $\bbT_1\oplus_e\bbT_2=(\Delta_1,\iota,\Theta)$. 
    Let $\lambda_1$, $\lambda_2$ be given such that the following is a pushout square:
    \begin{equation*}
    \begin{tikzcd}
        \Theta_2 \rar[hook]{\lambda_2}& \Theta \\
        \Delta_2\uar[hook]{\iota_2}\rar[hook]{e} & \Theta_1\uar[hook]{\lambda_1}
    \end{tikzcd}
    \end{equation*}
    and such that $\iota=\lambda_1\circ\iota_1$.
    
    For every compatible cocone $(\mu_1,\mu_2)$  of $(e,\iota_2)$,  let us denote by $h_{\mu_1,\mu_2}\colon \Theta\to\Upsilon$ the unique homomorphism  that makes the following diagram commutative:
    \begin{equation*}
    \begin{tikzcd}
        ~ & & \Upsilon\\
        \Theta_2\rar[hook]{\lambda_2}\arrow[bend left]{urr}{\mu_2} & \Theta \urar[dashed]{h_{\mu_1,\mu_2}}\\
        \Delta_2 \uar[hook]{\iota_2}\rar[hook]{e}& \Theta_1\uar[hook]{\lambda_1}\arrow[bend right]{uur}{\mu_1}
    \end{tikzcd}
    \end{equation*} 
    By Lemma~\ref{homdecomp} we have that  every $h_{\mu_1,\mu_2}$ decomposes uniquely into the  natural homomorphism $\chi_{\mu_1,\mu_2}\colon\Theta\to\Theta/h_{\mu_1,\mu_2}$ and an embedding $\tilde{h}_{\mu_1,\mu_2}\colon\Theta/h_{\mu_1,\mu_2}\injto\Upsilon$.
    
    Let us define $\bbT_{\mu_1,\mu_2}:= (\Delta_1,\chi_{\mu_1,\mu_2}\circ\iota,\Theta/{h_{\mu_1,\mu_2}})$. We claim that if $\mu_1$ and $\mu_2$ are embeddings, then $\bbT_{\mu_1,\mu_2}$ is a graph type, that is, $\chi_{\mu_1,\mu_2}\circ\iota$ is an embedding.  To see this, 
	observe that
	\begin{equation*}
	\tilde{h}_{\mu_1,\mu_2}\circ\chi_{\mu_1,\mu_2}\circ\iota = h_{\mu_1,\mu_2}\circ\iota = h_{\mu_1,\mu_2} \circ\lambda_1\circ\iota_1 = \mu_1\circ\iota_1.
	\end{equation*}
   Thus, since $\mu_1\circ\iota_1$  and $\tilde{h}_{\mu_1,\mu_2}$ are embeddings, it follows that so is $\chi_{\mu_1,\mu_2}\circ\iota$. Note that $\bbT_1\oplus_e\bbT_2$ dominates $\bbT_{\mu_1,\mu_2}$, since $(1_{\Delta_1}, \chi_{\mu_1,\mu_2})\colon \bbT_1\oplus_e\bbT_2\to\bbT_{\mu_1,\mu_2}$, and since $\chi_{\mu_1,\mu_2}$ is surjective. Let us collect the graph types obtained in this way in a set $\cT$: 
   	\[
   	 \cT:=\{\bbT_{\mu_1,\mu_2}\mid (\mu_1,\mu_2) \text{ is a compatible cocone of }(e,\iota_2), \text{ and } \mu_1,\mu_2\text{ are embeddings}\}.
   	\]
   	Note that in the definition of $\cT$ the compatible cocones $(\mu_1,\mu_2)$ of $(e,\iota_2)$ are \emph{not} restricted to a fixed codomain $\Upsilon$. In particular they form a proper class and it is not clear a priori that $\cT$ is actually a set. Next we will prove the following claims:
   	\begin{enumerate}[label=(\Alph*),ref=(\Alph*)]
   		\item\label{claimfinite} $\cT$ is a finite set.
   		\item\label{claimiso} Exactly one element of $\cT$, namely $\bbT_{\lambda_1,\lambda_2}$, is isomorphic to $\bbT_1\oplus_e\bbT_2$. In particular, all other elements of $\cT$ are strictly dominated by  $\bbT_1\oplus_e\bbT_2$. 
   	\end{enumerate}
   	About \ref{claimfinite}:  Recall that for every compatible cocone $(\mu_1,\mu_2)$ of $(e,\iota_2)$ 
   	we have $\bbT_{\mu_1,\mu_2}=(\Delta_1,\chi_{\mu_1,\mu_2}\circ\iota,\Theta/{h_{\mu_1,\mu_2}})$. Let us analyze $\Theta/{h_{\mu_1,\mu_2}}$. 
   	According to Definition~\ref{quotient} its vertex set is $V(\Theta)/\ker h_{\mu_1,\mu_2}$. 
   	Thus, the number of possible quotients $\Theta/{h_{\mu_1,\mu_2}}$ is bounded from above by $B_n\cdot 2^{\binom{n}{2}}$, where $n=|V(\Theta)|$ and where $B_n$ denotes the $n$-th Bell number. Since   $\chi_{\mu_1,\mu_2}\circ\iota\colon\Delta_1\to\Theta/{h_{\mu_1,\mu_2}}$ is an embedding, it is in particular a function. Thus, the cardinality of $\cT$ can be estimated from above by $B_n\cdot 2^{\binom{n}{2}}\cdot n^m$, where $m=|V(\Delta_1)|$. 
   	   	
   	About \ref{claimiso}: First we note that $\bbT_{\lambda_1,\lambda_2}\in \cT$, since $\lambda_1$ and $\lambda_2$ are embeddings and since $(\lambda_1,\lambda_2)$ is a limiting cocone for $(e,\iota_2)$. Clearly, $h_{\lambda_1,\lambda_2}=1_{\Theta}$. So $\ker h_{\lambda_1,\lambda_2}$ is the equality relation and $\Theta/h_{\lambda_1,\lambda_2}$ is obtained from $\Theta$ by renaming each vertex $v$ to the singleton class $\{v\}=[v]_{\ker h_{\lambda_1,\lambda_2}}$. In particular, $\chi_{\lambda_1,\lambda_2}\colon\Theta\to\Theta/h_{\lambda_1,\lambda_2}$ is an isomorphism. Thus $(1_{\Delta_1},\chi_{\lambda_1,\lambda_2})\colon \bbT_1\oplus_e\bbT_2 \to \bbT_{\lambda_1,\lambda_2}$ is an isomorphism, too. It remains to show that $\bbT_{\lambda_1,\lambda_2}$ is the only element of $\cT$ that is isomorphic to $\bbT_1\oplus_e\bbT_2$: Suppose that $\bbT_{\mu_1,\mu_2}= (\Delta_1,\chi_{\mu_1,\mu_2}\circ\iota,\Theta/{h_{\mu_1,\mu_2}})$ is an element of $\cT$ isomorphic to $\bbT_1\oplus_e\bbT_2$. Then in particular, $\Theta/h_{\mu_1,\mu_2}$ is isomorphic to $\Theta$. Since $|V(\Theta/h_{\mu_1,\mu_2})|=|V(\Theta)|$, we have that $\ker h_{\mu_1,\mu_2}$ is the equality relation. Thus  
   	$V(\Theta/h_{\lambda_1,\lambda_2})= V(\Theta/h_{\mu_1,\mu_2})$, and $\chi_{\lambda_1,\lambda_2}$ and $\chi_{\mu_1,\mu_2}$ coincide as functions. Moreover, since $|E(\Theta)|=|E(\Theta/h_{\mu_1,\mu_2})|$, we obtain, that $\chi_{\mu_1,\mu_2}$ is an isomorphism. Consequently, $\bbT_{\mu_1,\mu_2}=\bbT_{\lambda_1,\lambda_2}$, which proves  Claim~\ref{claimiso}.   	
   	
   	At this point it is essential to notice that $\cT$  only depends on $\bbT_1$, $\bbT_2$, and $e$, but not on $\Gamma$. 
    
    Let us fix an embedding $\kappa\colon \Delta_1\to\Gamma$.  
 	Let $\mu_1\colon\Theta_1\injto\Gamma$, $\mu_2\colon\Theta_2\injto\Gamma$. Then $(\mu_1,\mu_2)$ is called a \emph{$\kappa$-compatible pair}  if   
    \begin{enumerate}[label=\emph{\alph*})]
    \item $\mu_1$ extends $\kappa$ along $\iota_1$ (\ie $\kappa=\mu_1\circ\iota_1$),
    \item $\mu_2$ extends $\mu_1\circ e$ along $\iota_2$ (\ie $\mu_1\circ e = \mu_2\circ\iota_2$).
    \end{enumerate}
    Clearly, every $\kappa$-compatible pair is a compatible cocone for $(e,\iota_2)$. Thus, to every $\kappa$-compatible pair we can associate the graph type $\bbT_{\mu_1,\mu_2}$ from $\cT$. 
    Let us define
    \begin{align*}
    P_\kappa &:=\{(\mu_1,\mu_2)\mid (\mu_1,\mu_2) \text{ is a $\kappa$-compatible pair}\},\\
    P_{\kappa,\bbT}&:=\{(\mu_1,\mu_2)\mid (\mu_1,\mu_2)\in P_\kappa, \bbT_{\mu_1,\mu_2}=\bbT\}.
    \end{align*}
    In the following we will show that for every $\bbT=(\Delta_1,\tilde\iota,\widetilde\Theta)\in \cT$ there is a one to one correspondence between the elements of $P_{\kappa,\bbT}$ and the  extensions of $\kappa$ along $\tilde\iota$: Let $\bbT=(\Delta_1,\tilde\iota,\widetilde\Theta)\in \cT$. Then there exists a compatible cocone $(\nu_1,\nu_2)$ of $(e,\iota_2)$, such that $\nu_1$ and $\nu_2$ are embeddings, and such that $\bbT=\bbT_{\nu_1,\nu_2}$. In particular, $\tilde\iota=\chi_{\nu_1,\nu_2}\circ\iota$ and $\widetilde\Theta=\Theta/h_{\nu_1,\nu_2}$. 
    
	Let $\hat{\kappa}\colon \widetilde\Theta\injto\Gamma$ be an extension of  $\kappa$ along $\tilde\iota$ (\ie $\kappa=\hat{\kappa}\circ\tilde\iota$). Define $\mu_1^{[\hat\kappa]}\colon \Theta_1\injto\Gamma$ by $\mu_1^{[\hat\kappa]}:=\hat{\kappa}\circ\chi_{\nu_1,\nu_2}\circ\lambda_1$ and $\mu_2^{[\hat\kappa]}\colon \Theta_2\injto\Gamma$ by $\mu_2^{[\hat\kappa]}:=\hat{\kappa}\circ\chi_{\nu_1,\nu_2}\circ\lambda_2$. 
	    \begin{equation*}
    \begin{tikzcd}[column sep=3em]
    	\Theta_2\arrow[bend left,hook,dashed]{rrr}{\mu_2^{[\hat\kappa]}}\rar[hook]{\lambda_2} & \Theta\rar[->>]{\chi_{\nu_1,\nu_2}} & \widetilde\Theta \rar[hook]{\hat\kappa}& \Gamma\\
    	\Delta_2\rar[hook]{e}\uar[hook]{\iota_2} & \Theta_1\arrow[hook,dashed,swap]{urr}[near end]{\mu_1^{[\hat\kappa]}}\uar[hook]{\lambda_1} & & \Delta_1.\uar[hook,swap]{\kappa}\arrow[hook']{ll}{\iota_1}
    \end{tikzcd}
    \end{equation*} 
    We claim that $(\mu_1^{[\hat\kappa]},\mu_2^{[\hat\kappa]})$ is a $\kappa$-compatible pair. First we
    note that $\mu_1^{[\hat\kappa]}$ is an embedding, since $\tilde{h}_{\nu_1,\nu_2}\circ(\chi_{\nu_1,\nu_2}\circ\lambda_1)=\nu_1$ is an embedding and $\mu_2^{[\hat\kappa]}$ is an embedding, since  $\tilde{h}_{\nu_1,\nu_2}\circ(\chi_{\nu_1,\nu_2}\circ\lambda_2)=\nu_2$ is an embedding. Next we  compute that 
    \[
    \mu_1^{[\hat\kappa]}\circ\iota_1 = \hat\kappa\circ\chi_{\nu_1,\nu_2}\circ\lambda_1\circ\iota_1 = \hat\kappa\circ\chi_{\nu_1,\nu_2}\circ\iota = \hat\kappa\circ\tilde\iota = \kappa,
    \]
    thus $\mu_1^{[\hat\kappa]}$ extends $\kappa$ along $\iota_1$, and
    \[
      \mu_1^{[\hat\kappa]}\circ e = \hat\kappa\circ\chi_{\nu_1,\nu_2}\circ\lambda_1\circ e = \hat\kappa\circ\chi_{\nu_1,\nu_2}\circ\lambda_2\circ\iota_2 = \mu_2^{[\hat\kappa]}\circ\iota_2,
    \]
     thus $\mu_2^{[\hat\kappa]}$ extends $\mu_1^{[\hat\kappa]}\circ e$ along $\iota_2$ and the claim is proved.
     
     The next step is to show that the assignment $\hat\kappa\mapsto(\mu_1^{[\hat\kappa]},\mu_2^{[\hat\kappa]})$ is a bijection. 
     
     ``injectivity'': Let $\hat\kappa_1$ and $\hat\kappa_2$ be extensions of $\kappa$ along $\tilde\iota$ and suppose that $(\mu_1^{[\hat\kappa_1]},\mu_2^{[\hat\kappa_1]})=(\mu_1^{[\hat\kappa_2]},\mu_2^{[\hat\kappa_2]})$. Note that $\hat\kappa_1\circ\chi_{\nu_1,\nu_2}$ is the unique mediating morphism from the limiting cocone $(\lambda_1,\lambda_2)$ to $(\mu_1^{[\hat\kappa_1]},\mu_2^{[\hat\kappa_1]})$, and that  $\hat\kappa_2\circ\chi_{\nu_1,\nu_2}$ is the unique mediating morphism from  $(\lambda_1,\lambda_2)$ to $(\mu_1^{[\hat\kappa_2]},\mu_2^{[\hat\kappa_2]})$.  Since $(\mu_1^{[\hat\kappa_1]},\mu_2^{[\hat\kappa_1]})=(\mu_1^{[\hat\kappa_2]},\mu_2^{[\hat\kappa_2]})$, we have $\hat\kappa_1\circ\chi_{\nu_1,\nu_2}=\hat\kappa_2\circ\chi_{\nu_1,\nu_2}$. Since $\chi_{\nu_1,\nu_2}$ is surjective, we conclude $\hat\kappa_1=\hat\kappa_2$. 
     
     ``surjectivity'': Let $(\mu_1,\mu_2)$ be any $\kappa$-compatible pair such that $\bbT_{\mu_1,\mu_2} = \bbT = \bbT_{\nu_1,\nu_2}$. In particular, $\Theta/h_{\mu_1,\mu_2}=\widetilde\Theta=\Theta/h_{\nu_1,\nu_2} $, and thus also $\chi_{\mu_1,\mu_2}=\chi_{\nu_1,\nu_2}$. We claim that $\tilde{h}_{\mu_1,\mu_2}$ is an extension of $\kappa$ along $\tilde\iota$. Indeed, we may compute
     \[
      \tilde{h}_{\mu_1,\mu_2}\circ\tilde\iota =  \tilde{h}_{\mu_1,\mu_2}\circ\chi_{\nu_1,\nu_2}\circ\iota =\tilde{h}_{\mu_1,\mu_2}\circ\chi_{\mu_1,\mu_2}\circ\iota = h_{\mu_1,\mu_2}\circ\iota =  h_{\mu_1,\mu_2}\circ\lambda_1\circ\iota_1 = \mu_1\circ\iota_1 = \kappa.
     \]
     It remains to show that $\tilde{h}_{\mu_1,\mu_2}$ is really a preimage of $(\mu_1,\mu_2)$ under our correspondence. For this we compute
     \begin{align*}
     	\tilde{h}_{\mu_1,\mu_2} \circ\chi_{\nu_1,\nu_2}\circ\lambda_1 &=\tilde{h}_{\mu_1,\mu_2} \circ\chi_{\mu_1,\mu_2}\circ\lambda_1= h_{\mu_1,\mu_2}\circ\lambda_1 = \mu_1,\text{ and}\\
     	\tilde{h}_{\mu_1,\mu_2} \circ\chi_{\nu_1,\nu_2}\circ\lambda_2 &=\tilde{h}_{\mu_1,\mu_2} \circ\chi_{\mu_1,\mu_2}\circ\lambda_2= h_{\mu_1,\mu_2}\circ\lambda_2 = \mu_2.
     \end{align*}
     
	Now we are able to count the number of $\kappa$-compatible pairs in two different ways. From one hand, by definition, we have
	\[
		|P_\kappa| = \#(\Gamma,\bbT_1)\cdot\#(\Gamma,\bbT_2).
	\]
    From the other hand, by what we showed above, we have 
    \[
    	|P_\kappa| = \sum_{\bbT\in T} |P_{\kappa,\bbT}| = \sum_{\bbT\in T}\#(\Gamma,\bbT,\kappa).
    \]
    Let $\{\bbT^{(1)},\dots,\bbT^{(m)}\}$ be a transversal of the isomorphism classes of the graph types in $\cT$. Without loss of generality, let $\bbT^{(m)}\cong\bbT_1\oplus_e\bbT_2$. For each $i\in\{1,\dots,m\}$, by $c_i$ we denote the number of graph types in $\cT$ isomorphic to $\bbT^{(i)}$. Note that $c_m=1$ and that $\#(\Gamma,\bbT^{(m)},\kappa)=\#(\Gamma,\bbT_1\oplus_e\bbT_2,\kappa)$. With this in mind, we may rewrite 
    \[
    \#(\Gamma,\bbT_1\oplus_e\bbT_2,\kappa) = \#(\Gamma,\bbT_1)\cdot\#(\Gamma,\bbT_2)-\sum_{i=1}^{m-1} c_i\cdot\#(\Gamma,\bbT^{(i)}).
    \]
    It follows that $\#(\Gamma,\bbT_1\oplus_e\bbT_2,\kappa)$ actually does not depend on $\kappa$. In other words, $\Gamma$ is $\bbT_1\oplus_e\bbT_2$-regular.
\end{proof}
The type counting lemma is the technical backbone of all further results in this paper. Alas,  while the language of category theory used in the proof assures correctness, is not ideal to illustrate the combinatorial intuitions behind the proof. To amend this situation, we  elaborate on an extended example:
\begin{example}\label{gclexample}
	Suppose, we are given a $(2,4)$-regular graph $\Gamma$. In other words, $\Gamma$ is strongly regular and satisfies the $4$-vertex condition. Let us illustrate the idea behind the type counting lemma by analyzing the graph type $\bbT=(\Delta,\iota,\Theta)$ given by the following picture (here $\Delta=\Theta(\{x,y\})$, and $\iota\colon \Delta\injto\Theta$ is the identical embedding):
	\[
	\bbT\colon\begin{tikzpicture}[baseline={(current bounding box.center)}]
		\SetGraphUnit{1}
	    \GraphInit[vstyle=Welsh]
	    \renewcommand*{\VertexSmallMinSize}{2pt}
	    \SetVertexMath
	    \SetVertexLabel
	    \Vertex[Lpos=180]{x}
	    \EA(x){y}
	    \NO[Lpos=180](x){u}
	    \Edges(x,u,y,x)
	    \NO(y){v}
	    \NOEA(v){w}
	    \Edges(u,v,y)
	    \Edges(u,w,y)
	    \Edge(v)(w)
	    \AddVertexColor{black}{x,y}
	\end{tikzpicture}
	\]
	Our first observation is that $\bbT$ is $(2,4)$-reducible. In particular we have $\bbT\cong \bbT_1\oplus_e\bbT_2$, where $\bbT_1=(\Delta,\iota_1,\Theta_1)$ and $\bbT_2=(\Delta_2,\iota_2,\Theta_2)$ are given by:
	\[
	\bbT_1\colon\begin{tikzpicture}[baseline={(current bounding box.center)}]
		\SetGraphUnit{1}
	    \GraphInit[vstyle=Welsh]
	    \renewcommand*{\VertexSmallMinSize}{2pt}
	    \SetVertexMath
	    \SetVertexLabel
	    \Vertex[Lpos=180]{x}
	    \EA(x){y}
	    \NO[Lpos=180](x){u}
	    \Edges(x,u,y,x)
	    \AddVertexColor{black}{x,y}
	\end{tikzpicture}\qquad \bbT_2\colon\begin{tikzpicture}[baseline={(current bounding box.center)}]
		\SetGraphUnit{1}
	    \GraphInit[vstyle=Welsh]
	    \renewcommand*{\VertexSmallMinSize}{2pt}
	    \SetVertexMath
	    \SetVertexLabel
	    \Vertex[Lpos=180]{u}
	    \NOEA(u){v}
	    \NO(v){w}
	    \SOEA[Lpos=0](v){y}
	    \Edges(u,w,y,u,v,w)
	    \Edge(v)(y)
	    \AddVertexColor{black}{u,y}
	\end{tikzpicture}
	\]
	and where $e\colon\Delta_2\injto\Theta_1$ is the identical embedding. 
	Since $\Gamma$ is $(2,4)$-regular, it is $\bbT_1$- and $\bbT_2$-regular.

	Let $(\mu_1,\mu_2)$ be an arbitrary compatible cocone of $(e,\iota_2)$, where $\mu_i\colon\Theta_i\injto\Upsilon$ ($i\in\{1,2\}$), say 
	\begin{align*}
		\mu_1 &\colon x\mapsto a, y\mapsto b, u\mapsto c,\\
		\mu_2 &\colon y\mapsto b, u\mapsto c, v\mapsto d, w\mapsto o, \text{ where } a,b,c,d,o\in V(\Upsilon).
	\end{align*}
	Then the unique mediating morphism $h_{\mu_1,\mu_2}$ is given by
	\[
	h_{\mu_1,\mu_2}\colon x\mapsto a,y\mapsto b,u\mapsto c,v\mapsto d,w\mapsto o.
	\]
	In the following we list all possibilities what the subgraph of $\Upsilon$ induced by $\{a,b,c,d,o\}$ might look like (depending on $\Upsilon$ and on $(\mu_1,\mu_2)$). This list is obtained by constructing all graphs vertex labeled by $\{a,b,c,d,o\}$ in such a way that every vertex has at least one label and such that every label is used exactly once, subject to the condition that the above given functions $\mu_1$ and $\mu_2$ define graph-embeddings. In our case this means that the subgraph induced by the vertices labeled by elements of $\{a,b,c\}$ induce $K_3$ and those labeled by elements from $\{b,c,d,o\}$ induce $K_4$:
	\begin{align*}
		\text{(1)}\colon & \begin{tikzpicture}[baseline={(current bounding box.center)}]
		\SetGraphUnit{1}
	    \GraphInit[vstyle=Welsh]
	    \renewcommand*{\VertexSmallMinSize}{2pt}
	    \SetVertexMath
	    \SetVertexLabel
	    \Vertex[Lpos=180,L=a]{x_1}
	    \EA[L=b](x_1){y_1}
	    \NO[Lpos=180,L=c](x_1){x_2}
	    \Edges(x_1,x_2,y_1,x_1)
	    \NO[L=d](y_1){u}
	    \NOEA[L=o](u){v}
	    \Edges(x_2,u,y_1)
	    \Edges(x_2,v,y_1)
	    \Edge(u)(v)
	    \Edge(x_1)(u)
	\end{tikzpicture} & 
	\text{(2)}\colon & \begin{tikzpicture}[baseline={(current bounding box.center)}]
		\SetGraphUnit{1}
	    \GraphInit[vstyle=Welsh]
	    \renewcommand*{\VertexSmallMinSize}{2pt}
	    \SetVertexMath
	    \SetVertexLabel
	    \Vertex[Lpos=180,L=a]{x_1}
	    \EA[L=b](x_1){y_1}
	    \NO[Lpos=180,L=c](x_1){x_2}
	    \Edges(x_1,x_2,y_1,x_1)
	    \NO[L=o](y_1){v}
	    \NOEA[L=d](u){u}
	    \Edges(x_2,u,y_1)
	    \Edges(x_2,v,y_1)
	    \Edge(u)(v)
	    \Edge(x_1)(v)
	\end{tikzpicture}&
	\text{(3)}\colon & \begin{tikzpicture}[baseline={(current bounding box.center)}]
		\SetGraphUnit{1}
	    \GraphInit[vstyle=Welsh]
	    \renewcommand*{\VertexSmallMinSize}{2pt}
	    \SetVertexMath
	    \SetVertexLabel
	    \Vertex[Lpos=-90,L={a=o}]{x_2}
	    \NOEA[L=c](x_2){u}
	    \NO[L=d](u){v}
	    \SOEA[Lpos=-90,L=b](u){y_2}
	    \Edges(x_2,v,y_2,x_2,u,v)
	    \Edge(u)(y_2)
	\end{tikzpicture} \\
	\text{(4)}\colon & \begin{tikzpicture}[baseline={(current bounding box.center)}]
		\SetGraphUnit{1}
	    \GraphInit[vstyle=Welsh]
	    \renewcommand*{\VertexSmallMinSize}{2pt}
	    \SetVertexMath
	    \SetVertexLabel
	    \Vertex[Lpos=180,L=c]{x_2}
	    \NOEA[L=o](x_2){v}
	    \NO[L={a=d}](v){u}
	    \SOEA[Lpos=0,L=b](v){y_2}
	    \Edges(x_2,v,y_2,x_2,u,v)
	    \Edge(u)(y_2)
	\end{tikzpicture}&
	\text{(5)}\colon & \begin{tikzpicture}[baseline={(current bounding box.center)}]
		\SetGraphUnit{1}
	    \GraphInit[vstyle=Welsh]
	    \renewcommand*{\VertexSmallMinSize}{2pt}
	    \SetVertexMath
	    \SetVertexLabel
	    \Vertex[Lpos=180,L=a]{x_1}
	    \EA[L=b](x_1){y_1}
	    \NO[Lpos=180,L=c](x_1){x_2}
	    \Edges(x_1,x_2,y_1,x_1)
	    \NO[L=d](y_1){u}
	    \NOEA[L=o](u){v}
	    \Edges(x_2,u,y_1)
	    \Edges(x_2,v,y_1)
	    \Edge(u)(v)
	\end{tikzpicture} & \text{(6)}\colon & \begin{tikzpicture}[baseline={(current bounding box.center)}]
		\SetGraphUnit{1}
	    \GraphInit[vstyle=Welsh]
	    \renewcommand*{\VertexSmallMinSize}{2pt}
	    \SetVertexMath
	    \SetVertexLabel
   		\Vertex[a=0*72-126,d=1,Lpos=0*72-126,Ldist=0,L=a]{x_1}
   		\Vertex[a=1*72-126,d=1,Lpos=1*72-126,Ldist=0,L=b]{y_1}
   		\Vertex[a=2*72-126,d=1,Lpos=2*72-126,Ldist=0,L=c]{x_2}
   		\Vertex[a=3*72-126,d=1,Lpos=3*72-126,Ldist=0,L=d]{u}
   		\Vertex[a=4*72-126,d=1,Lpos=4*72-126,Ldist=0,L=o]{v}
   		\Edges(x_1,y_1,x_2,u,v,x_1,x_2,v,y_1,u,x_1)
	   \end{tikzpicture}
	\end{align*}
	Now we are ready to construct the set $T$ mentioned in the proof of the type counting Lemma.
	In cases (1) and (2) we obtain
	\[
	\bbT^{(1)}:=\bbT_{\mu_1,\mu_2}\colon
	\begin{tikzpicture}[baseline={(current bounding box.center)}]
		\SetGraphUnit{1}
	    \GraphInit[vstyle=Welsh]
	    \renewcommand*{\VertexSmallMinSize}{2pt}
	    \SetVertexMath
	    \SetVertexLabel
	    \Vertex[Lpos=180,L={\{x\}}]{x_1}
	    \EA[L={\{y\}}](x_1){y_1}
	    \NO[Lpos=180,L={\{u\}}](x_1){x_2}
	    \Edges(x_1,x_2,y_1,x_1)
	    \NO[Ldist=1em,L={\{v\}}](y_1){u}
	    \NOEA[L={\{w\}}](u){v}
	    \Edges(x_2,u,y_1)
	    \Edges(x_2,v,y_1)
	    \Edge(u)(v)
	    \Edge(x_1)(u)
	    \AddVertexColor{black}{x_1,y_1}
	\end{tikzpicture}\quad \tilde{h}_{\mu_1,\mu_2}\colon\begin{cases}
		\{x\}\mapsto a,\\
		\{y\}\mapsto b,\\
		\{u\}\mapsto c,\\
		\{v\}\mapsto d,\\
		\{w\}\mapsto o.
	\end{cases}
	\]
	In case (3) we obtain
	\[
		\bbT^{(2)}:=\bbT_{\mu_1,\mu_2}\colon
		\begin{tikzpicture}[baseline={(current bounding box.center)}]
		\SetGraphUnit{1}
	    \GraphInit[vstyle=Welsh]
	    \renewcommand*{\VertexSmallMinSize}{2pt}
	    \SetVertexMath
	    \SetVertexLabel
	    \Vertex[Lpos=180,L={\{x,w\}}]{x_2}
	    \NOEA[Lpos=-90,Ldist=2,L={\{u\}}](x_2){u}
	    \NO[L={\{v\}}](u){v}
	    \SOEA[Lpos=0,L={\{y\}}](u){y_2}
	    \Edges(x_2,v,y_2,x_2,u,v)
	    \Edge(u)(y_2)
	    \AddVertexColor{black}{x_2,y_2}
		\end{tikzpicture}\tilde{h}_{\mu_1,\mu_2}\colon\begin{cases}
		\{x,w\}\mapsto a(=o),\\
		\{y\}\mapsto b,\\
		\{u\}\mapsto c,\\
		\{v\}\mapsto d.
	\end{cases}
	\]
	In case (4) we obtain
	\[
	\bbT^{(3)}:=\bbT_{\mu_1,\mu_2}\cong\bbT^{(2)}\colon
	\begin{tikzpicture}[baseline={(current bounding box.center)}]
		\SetGraphUnit{1}
	    \GraphInit[vstyle=Welsh]
	    \renewcommand*{\VertexSmallMinSize}{2pt}
	    \SetVertexMath
	    \SetVertexLabel
	    \Vertex[Lpos=-90,L={\{u\}}]{x_2}
	    \NOEA[Lpos=-90,Ldist=2,L=\{w\}](x_2){u}
	    \NO[L={\{x,v\}}](u){v}
	    \SOEA[Lpos=-90,L={\{y\}}](u){y_2}
	    \Edges(x_2,v,y_2,x_2,u,v)
	    \Edge(u)(y_2)
	    \AddVertexColor{black}{v,y_2}
	\end{tikzpicture} \tilde{h}_{\mu_1,\mu_2}\colon\begin{cases}
		\{x,v\}\mapsto a(=d),\\
		\{y\}\mapsto b,\\
		\{u\}\mapsto c,\\
		\{w\}\mapsto o.
	\end{cases}
	\]
	In case (5) we obtain
	\[
	\bbT^{(4)}:=\bbT_{\mu_1,\mu_2}\cong\bbT\colon
	\begin{tikzpicture}[baseline={(current bounding box.center)}]
		\SetGraphUnit{1}
	    \GraphInit[vstyle=Welsh]
	    \renewcommand*{\VertexSmallMinSize}{2pt}
	    \SetVertexMath
	    \SetVertexLabel
	    \Vertex[Lpos=180,L={\{x\}}]{x_1}
	    \EA[L={\{y\}}](x_1){y_1}
	    \NO[Lpos=180,L={\{u\}}](x_1){x_2}
	    \Edges(x_1,x_2,y_1,x_1)
	    \NO[Ldist=1em,L={\{v\}}](y_1){u}
	    \NOEA[L={\{w\}}](u){v}
	    \Edges(x_2,u,y_1)
	    \Edges(x_2,v,y_1)
	    \Edge(u)(v)
   	    \AddVertexColor{black}{x_1,y_1}
	\end{tikzpicture}\quad \tilde{h}_{\mu_1,\mu_2}\colon\begin{cases}
		\{x\}\mapsto a,\\
		\{y\}\mapsto b,\\
		\{u\}\mapsto c,\\
		\{v\}\mapsto d,\\
		\{w\}\mapsto o.
	\end{cases}
	\] 
	In case (6) we obtain 
	\[
	\bbT^{(5)}:=\bbT_{\mu_1,\mu_2}\colon
	\begin{tikzpicture}[baseline={(current bounding box.center)}]
		\SetGraphUnit{1}
	    \GraphInit[vstyle=Welsh]
	    \renewcommand*{\VertexSmallMinSize}{2pt}
	    \SetVertexMath
	    \SetVertexLabel
   		\Vertex[a=0*72-126,d=1,Lpos=0*72-126,Ldist=0,L={\{x\}}]{x_1}
   		\Vertex[a=1*72-126,d=1,Lpos=1*72-126,Ldist=0,L={\{y\}}]{y_1}
   		\Vertex[a=2*72-126,d=1,Lpos=2*72-126,Ldist=0,L={\{u\}}]{x_2}
   		\Vertex[a=3*72-126,d=1,Lpos=3*72-126,Ldist=0,L={\{v\}}]{u}
   		\Vertex[a=4*72-126,d=1,Lpos=4*72-126,Ldist=0,L={\{w\}}]{v}
   		\Edges(x_1,y_1,x_2,u,v,x_1,x_2,v,y_1,u,x_1)
	    \AddVertexColor{black}{x_1,y_1}
	\end{tikzpicture}\quad \tilde{h}_{\mu_1,\mu_2}\colon\begin{cases}
		\{x\}\mapsto a,\\
		\{y\}\mapsto b,\\
		\{u\}\mapsto c,\\
		\{v\}\mapsto d,\\
		\{w\}\mapsto o.
	\end{cases}
	\]
	To sum up, we have
	\[
	T=\{\bbT^{(1)},\bbT^{(2)},\bbT^{(3)},\bbT^{(4)},\bbT^{(5)}\}
	\]
	Let us fix  an embedding $\kappa\colon\Delta\injto\Gamma$. Then the set $P_\kappa$ of all $\kappa$-compatible pairs is given by
	\[
	P_\kappa = \{ (\mu_1,\mu_2)\mid \mu_1\text{ extends $\kappa$ along $\iota_1$ and $\mu_2$ extends $\mu_1\circ e$ along $\iota_2$}\}.
	\]
	Thus, we have
	\[\#(\Gamma,\bbT_1)\cdot\#(\Gamma,\bbT_2)= |P_\kappa|=\sum_{i=1}^5 \#(\Gamma,\bbT^{(i)},\kappa).\]
	If we suppose that $\Gamma$ is $\bbT^{(1)}$-, $\bbT^{(2)}$-, and $\bbT^{(5)}$-regular, then, taking into account that $\bbT^{(2)}\cong\bbT^{(3)}$, we obtain
	\[
	\#(\Gamma,\bbT^{(4)},\kappa) = \#(\Gamma,\bbT_1)\cdot \#(\Gamma,\bbT_2) - \#(\Gamma,\bbT^{(1)}) - 2\cdot\#(\Gamma,\bbT^{(2)}) - \#(\Gamma,\bbT^{(5)}).
	\]
	Finally, observing that $\#(\Gamma,\bbT,\kappa)=\#(\Gamma,\bbT^{(4)},\kappa)$. we arrive at
	\[
	\#(\Gamma,\bbT,\kappa) = \#(\Gamma,\bbT_1)\cdot\#(\Gamma,\bbT_2) - \#(\Gamma,\bbT^{(1)}) - 2\cdot\#(\Gamma,\bbT^{(2)}) - \#(\Gamma,\bbT^{(5)}).
	\]
	As this does not depend on $\kappa$, we conclude that $\Gamma$ is $\bbT$-regular.
\end{example}
\begin{remark}
	The formulation of the type counting Lemma is not as strong as it could be.  In particular, when analyzing the proof it becomes clear that the third condition can be weakened. It is not necessary that $\Gamma$ is $\bbT$-regular for \emph{all} graph types $\bbT$ strictly dominated by $\bbT_1\oplus_e\bbT_2$. Instead it is sufficient to claim that $\Gamma$ is $\bbT$-regular for all those graph types $\bbT$ for which there exists a morphism $(f,g)\colon\bbT_1\oplus_e\bbT_2\epito\bbT$ such that 
	\begin{enumerate}
		\item $f$ is an isomorphism,
		\item $g$ is surjective and not an isomorphism,
		\item $g\circ\lambda_1$ and $g\circ\lambda_2$ are embeddings,
	\end{enumerate}
	where $(\lambda_1,\lambda_2)$ is a limiting cocone for $(e,\iota_2)$.  
\end{remark}

\subsection{Criteria for \texorpdfstring{$(m,n)$}{(m,n)}-regularity}
The proofs of the following propositions  make use of a very basic induction principle for finite posets:
\begin{lemma}\label{indprinc}
    Let $(P,\le)$ be a finite partially ordered set and let $B\subseteq P$. If
    \begin{equation}\label{induct}
\forall p\in P\,:\,\big(\{q\in P\mid q<p\}\subseteq B\Rightarrow p\in B\big),
    \end{equation}
    then  we already have that $B$ is equal to $P$. 
\end{lemma}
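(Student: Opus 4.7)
The plan is a short proof by contradiction using the finiteness of $P$. Assume, for contradiction, that $B \neq P$. Then the complement $P \setminus B$ is a nonempty subset of $P$. Since $P$ is finite, the restriction of $\le$ to $P \setminus B$ has at least one minimal element; pick such a $p$. Minimality inside $P \setminus B$ means that for every $q \in P$ with $q < p$ we have $q \notin P \setminus B$, i.e.\ $q \in B$. Hence $\{q \in P \mid q < p\} \subseteq B$. Applying the hypothesis~\eqref{induct} to this particular $p$, we conclude $p \in B$, which contradicts the choice $p \in P \setminus B$. Therefore $B = P$.

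There is essentially no obstacle here: the statement is just the contrapositive of the existence of minimal elements in nonempty subsets of a finite poset. The only point worth a remark is where finiteness enters: it is used solely to guarantee that the nonempty set $P \setminus B$ admits a minimal element. The same argument goes through verbatim under the weaker hypothesis that $(P,\le)$ satisfies the descending chain condition (equivalently, that $<$ is well-founded), which is why the principle is often referred to as well-founded, or Noetherian, induction.
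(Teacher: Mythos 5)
Your proof is correct and is essentially identical to the paper's: both argue by contradiction, take a minimal element of the nonempty set $P\setminus B$ (using finiteness of $P$, which the paper assumes in the surrounding text), and apply \eqref{induct} to derive a contradiction. Your closing remark that descending chain condition suffices is accurate but goes beyond what the paper does.
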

\begin{proof}
    Suppose that \eqref{induct} holds for $B$, but that $B\neq P$. Let $x$ be a minimal element of $P\setminus B$ in $(P,\le)$ (this exists because $P$ is finite). Then for all $y< x$ we have $y\in B$. Thus, by \eqref{induct}, we also have $x\in B$, a contradiction.
\end{proof}

\begin{proposition}\label{bigbase}
    Let $\Gamma$ be an $(m,m)$-regular graph. Then, $\Gamma$ is $(m,n)$-regular if and only if it is $(\lseq m,n)$-regular.
\end{proposition}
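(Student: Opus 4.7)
My plan is to handle the two directions separately. The ``only if'' direction is immediate from the definitions: $(m,n)$-regularity specialises at $k=m$ to $[m,l]$-regularity for every $m\le l\le n$, which is precisely $[m,n)$-regularity.

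For the converse, I will prove by reverse induction on $k$, descending from $k=m$, that $\Gamma$ is $[k,l]$-regular for every $k\le l\le n$. The base case $k=m$ is exactly $[m,n)$-regularity. For the inductive step at some $k<m$, fix a graph-type $\bbT=(\Delta,\iota,\Theta)$ of order $(k,l)$ with $k\le l\le n$. If $l\le m$ then $\bbT$-regularity follows directly from $(m,m)$-regularity. The interesting case is $l>m$, where I enlarge the base of $\bbT$ by one vertex: since $l>m\ge k$, pick any $v\in V(\Theta)\setminus\im(\iota)$, set $\Delta':=\Theta(\im(\iota)\cup\{v\})$, and factor $\iota=\iota'\circ\iota''$ through $\Delta'$, with $\iota':\Delta'\injto\Theta$ the inclusion. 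This produces two auxiliary types $\bbT':=(\Delta',\iota',\Theta)$ of order $(k+1,l)$ and $\bbT'':=(\Delta,\iota'',\Delta')$ of order $(k,k+1)$, whose regularity is supplied respectively by the inductive hypothesis (as $k+1\le m$) and by $(m,m)$-regularity.

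To close the argument, given any embedding $\kappa:\Delta\injto\Gamma$, the map $\hat\kappa\mapsto\hat\kappa\circ\iota'$ is a bijection between extensions of $\kappa$ along $\iota$ and pairs $(\kappa',\hat\kappa)$ in which $\kappa'$ extends $\kappa$ along $\iota''$ and $\hat\kappa$ extends $\kappa'$ along $\iota'$. Combined with the two regularity counts this yields
\[
\#(\Gamma,\bbT,\kappa)=\#(\Gamma,\bbT'')\cdot\#(\Gamma,\bbT'),
\]
independent of $\kappa$, so $\Gamma$ is $\bbT$-regular, closing the induction.

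The main subtlety I expect lies in selecting the inductive setup. A tempting alternative is to recognise $\bbT\cong\bbT''\oplus_{\mathrm{id}}\bbT'$ and to invoke the Type-counting lemma, but its third hypothesis then demands regularity for every $\widetilde{\bbT}\prec\bbT$, which would entangle the reverse induction on $k$ with a simultaneous induction over $l$. The direct factoring via the restriction to $\Delta'$ sidesteps this entirely, because each extension $\hat\kappa$ has exactly one restriction to $\Delta'$, so the count is multiplicative on the nose and no dominated types enter the bookkeeping.
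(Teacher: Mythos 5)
Your proof is correct. The underlying idea --- factoring the extension problem through an intermediate induced subgraph containing $\im(\iota)$ --- is the same as in the paper, but the execution differs in two ways. The paper enlarges the base to order exactly $m$ in a single step, writing $\bbT\cong\bbT_1\oplus_{\hat\iota}\bbT_2$ with $\bbT_1=(\Delta,\iota,\hat\Delta)$ of order $(k,m)$ and $\bbT_2=(\hat\Delta,\hat\iota,\Theta)$ of order $(m,l)$, and then invokes the Type-counting lemma; to discharge that lemma's hypothesis (3) it runs an induction over the finite poset of all types of order $(k,l)$ with $k\le m$, $l\le n$, via Lemma~\ref{indprinc}. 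You instead grow the base one vertex at a time inside a descending induction on the base size $k$, and you replace the Type-counting lemma by a direct bijection. Your closing remark identifies exactly why this works: since the second factor's base is the \emph{whole} of the first factor's total graph, every compatible cocone has an embedding as its mediating morphism, so no dominated types occur and the count is multiplicative with no correction terms. (The same is in fact true of the paper's decomposition, so its appeal to hypothesis (3) guards against a correction sum that is empty; the paper's proof is nonetheless valid.) Your version is therefore more elementary and self-contained; what the paper's formulation buys is uniformity, since the same poset-induction-plus-Type-counting template is reused in Propositions~\ref{kisoregcond} and~\ref{redlist}, where the dominated types genuinely contribute. One cosmetic point: the map you describe as $\hat\kappa\mapsto\hat\kappa\circ\iota'$ should be $\hat\kappa\mapsto(\hat\kappa\circ\iota',\hat\kappa)$ for the stated bijection onto pairs to make literal sense, but this is clearly what you intend and does not affect the count.
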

\begin{proof}
    By definition, from $(m,n)$-regularity follows $(\lseq m,n)$-regularity.
    
    Suppose that $\Gamma$ is $(\lseq m,n)$-regular and $(m,m)$-regular. Let $P$ be a transversal of the isomorphism classes of graph types of order $(k,l)$ for $k\le m$ and for $l\le n$. Then, by Lemma~\ref{fingraphtypes}, $(P,\preceq)$ is a finite poset. Moreover, whenever $\bbT\in P$ and $\bbT'\preceq\bbT$, then $\bbT'$ is isomorphic to an element of $P$.
        
    Let $\bbT=(\Delta,\iota,\Theta)\in P$ be of order  $(k,l)$.  Suppose  that for all $\bbT'\prec\bbT$ the graph  $\Gamma$ is $\bbT'$-regular.
If $l\le m$, then $\Gamma$ is $\bbT$-regular, by assumption. So suppose that $m<l\le n$. Let $\hat{\Delta}$ be an induced subgraph of order $m$ of $\Theta$ that contains the image of $\iota$, and let $\hat{\iota}$ be the identical embedding of $\hat\Delta$ into $\Theta$. Then $\bbT_1:=(\Delta,\iota,\hat{\Delta})$ is a graph type of order $(k,m)$, and  $\bbT_2:=(\hat{\Delta},\hat{\iota},\Theta)$ is a graph type of order $(m,l)$. Moreover, $\bbT\cong\bbT_1\oplus_{\hat{\iota}}\bbT_2$. By the assumptions, we have that $\Gamma$ is $\bbT_1$- and $\bbT_2$-regular. Hence, by the type counting lemma, we conclude that $\Gamma$ is $\bbT$-regular. 

 By the arguments above and by Lemma~\ref{indprinc}, $\Gamma$ is $\bbT$-regular for all graph types $\bbT$ from $P$. In other words, $\Gamma$ is $(m,n)$-regular.  
\end{proof}

Note that a graph is $(2,2)$-regular if and only if it is regular. Thus, the previous proposition generalizes a  classic result by A.V.~Ivanov:
\begin{theorem}[A.V.~Ivanov {\cite[Proposition 2.1]{Iva94}}]
 Let $\Gamma$ be a regular graph. Then $\Gamma$ satisfies the the $t$-vertex condition if and only if it is $(\lseq 2,t)$-regular. 
\end{theorem}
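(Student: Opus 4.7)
The plan is to recognize Theorem~\ref{Ivanov} as an immediate specialization of Proposition~\ref{bigbase}, once the hypotheses are translated into the categorical language of the paper. There are really two translations to perform and then a single invocation.

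First, I would apply the lemma stating that $\Gamma$ satisfies the $t$-vertex condition if and only if it is $(2,t)$-regular. This reduces the claim to: for a regular graph $\Gamma$, one has $\Gamma$ is $(2,t)$-regular if and only if $\Gamma$ is $[2,t)$-regular. Second, I would invoke the observation recorded just before the statement, namely that a graph is $(2,2)$-regular if and only if it is regular. This is easy to verify directly from the definitions: $[1,1]$- and $[2,2]$-regularity are automatic because for graph-types of those orders the embedding $\iota$ is an isomorphism, so every embedding of $\Delta$ into $\Gamma$ admits exactly one extension; on the other hand, $[1,2]$-regularity for the two isomorphism classes of $(1,2)$-types (namely $(K_1\injto K_2)$ and $(K_1\injto\overline{K_2})$) forces every vertex to have the same valency, which is exactly regularity.

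With these two identifications in hand, the theorem turns into the statement that a $(2,2)$-regular graph $\Gamma$ is $(2,t)$-regular if and only if it is $[2,t)$-regular. But this is literally Proposition~\ref{bigbase} applied with $m=2$ and $n=t$. No substantive obstacle remains, which is why a $\qed$ is placed immediately after the statement: the only genuine mathematical content has already been absorbed by the type-counting lemma and the inductive argument in the proof of Proposition~\ref{bigbase}, and Ivanov's theorem becomes a corollary obtained by unwinding notation.
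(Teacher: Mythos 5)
Your proposal is correct and matches the paper's intended argument exactly: the paper places a \qed on the statement precisely because, after identifying the $t$-vertex condition with $(2,t)$-regularity and noting that $(2,2)$-regularity is the same as ordinary regularity, the theorem is Proposition~\ref{bigbase} with $m=2$ and $n=t$. Your explicit verification that $(2,2)$-regularity reduces to the two $(1,2)$-types is a small but welcome addition that the paper leaves implicit.
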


\begin{definition}
    A graph $\Gamma=(V,E)$ is called \emph{$\kk$-isoregular} if for every subset $X\subseteq V$ with $|X|\le \kk$ the number of vertices $v\notin X$ that are adjacent to all elements of $X$ does not depend on $X$ but only on the isomorphism type of the subgraph of $\Gamma$  induced by $X$. 
\end{definition}

\begin{proposition}
    Let $\Gamma$ be a graph and let $\kk>0$ be a natural number. Then the following are equivalent:
    \begin{enumerate}
    \item $\Gamma$ is $\kk$-isoregular,
    \item $\Gamma$ is $(\lseq l,\lseq l+1)$-regular for every $1\le l\le \kk$,
    \item $\Gamma$ is $(\kk,\kk+1)$-regular.
    \end{enumerate}
\end{proposition}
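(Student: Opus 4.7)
The plan is to establish the cycle $(1) \Rightarrow (2) \Rightarrow (3) \Rightarrow (1)$.

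For $(3) \Rightarrow (1)$, fix $X \subseteq V(\Gamma)$ with $|X| = r \le k$, put $\Delta = \Gamma(X)$, and build the graph-type $\bbT_X = (\Delta, \iota, \Theta)$ where $\iota$ is the obvious inclusion and $\Theta$ is obtained from $\Delta$ by adjoining a single new vertex adjacent to every vertex of $\Delta$. With $\kappa: \Delta \injto \Gamma$ the inclusion, $\#(\Gamma, \bbT_X, \kappa)$ equals the number of common neighbors of $X$ in $\Gamma$. Since $\bbT_X$ has order $(r, r+1)$ with $r + 1 \le k+1$, hypothesis (3) forces this count to depend on $\kappa$ only through the isomorphism type of $\bbT_X$, hence only on $\Gamma(X)$, giving $k$-isoregularity.

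For $(1) \Rightarrow (2)$, fix $\bbT = (\Delta, \iota, \Theta)$ of order $(l, l+1)$ with $l \le k$. Let $v$ be the unique vertex of $V(\Theta) \setminus \iota(V(\Delta))$ and, identifying $\Delta$ with $\iota(\Delta) \subseteq \Theta$, let $A \subseteq V(\Delta)$ be the neighborhood of $v$ in $\Theta$. For an embedding $\kappa: \Delta \injto \Gamma$, the extensions $\hat\kappa$ of $\kappa$ along $\iota$ correspond bijectively to vertices $w \in V(\Gamma) \setminus \kappa(V(\Delta))$ whose neighborhood inside $\kappa(V(\Delta))$ is exactly $\kappa(A)$. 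I would expand this count via M\"obius inversion on the Boolean lattice of supersets of $A$:
\[
\#(\Gamma, \bbT, \kappa) \;=\; \sum_{A \subseteq B \subseteq V(\Delta)} (-1)^{|B \setminus A|}\, N_B(\kappa),
\]
where $N_B(\kappa)$ counts vertices outside $\kappa(V(\Delta))$ adjacent to every vertex of $\kappa(B)$. Splitting $N_B(\kappa)$ as (number of vertices outside $\kappa(B)$ adjacent to all of $\kappa(B)$) minus (number of vertices of $\kappa(V(\Delta) \setminus B)$ adjacent to all of $\kappa(B)$), the first summand depends only on the isomorphism type of $\Gamma(\kappa(B)) \cong \Delta(B)$ by $k$-isoregularity (since $|B| \le l \le k$), and the second is determined purely by $\Delta$ and $B$. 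Both are therefore functions of $\bbT$ alone, yielding $[l, l+1]$-regularity.

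For $(2) \Rightarrow (3)$, I would induct on $r$ with $1 \le r \le k+1$, showing that $\Gamma$ is $[j, r]$-regular for every $j \le r$; this is exactly (3). The case $j = r$ is trivial. For $j < r$, let $\bbT = (\Delta, \iota, \Theta)$ be of order $(j, r)$, pick any $v \in V(\Theta) \setminus \iota(V(\Delta))$, set $\Theta' = \Theta(V(\Theta) \setminus \{v\})$, and observe that $\bbT \cong \bbT_1 \oplus_e \bbT_2$ where $\bbT_1 = (\Delta, \iota, \Theta')$ has order $(j, r-1)$, $\bbT_2 = (\Theta', \iota_2, \Theta)$ has order $(r-1, r)$ with $\iota_2$ the natural inclusion, and $e$ is the identity on $\Theta'$. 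Hypothesis (2) gives $\bbT_2$-regularity (using $r-1 \le k$); the inductive hypothesis at $r-1$ gives $\bbT_1$-regularity; and any $\widetilde\bbT \prec \bbT$ shares the base size $j$ but has strictly smaller second coordinate, so $\Gamma$ is $\widetilde\bbT$-regular by induction as well. The Type-Counting Lemma then delivers $\bbT$-regularity.

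I expect the main obstacle to lie in $(1) \Rightarrow (2)$: the M\"obius inversion must be combined with a careful bookkeeping that separates ``external'' common neighbors (controlled by $k$-isoregularity) from the ``internal'' contribution of vertices of $\kappa(V(\Delta) \setminus B)$, because $k$-isoregularity only guarantees invariance of counts outside $B$, whereas $\bbT$-regularity requires a count outside the larger set $\kappa(V(\Delta))$. The remaining pieces $(2) \Rightarrow (3)$ and $(3) \Rightarrow (1)$ are essentially formal, reducing either to a decomposition that exactly matches the Type-Counting Lemma or to choosing the right ``cone'' graph-type.
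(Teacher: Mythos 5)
Your steps $(3)\Rightarrow(1)$ and $(1)\Rightarrow(2)$ are correct, and the second one takes a genuinely different route from the paper. Where the paper splits a type of order $(l,l+1)$ as a free sum of the induced type on the base with the cone over the neighbourhood of the extra vertex $v$, and then runs the Type-Counting Lemma together with an induction over the finite poset of types, you compute $\#(\Gamma,\bbT,\kappa)$ in closed form by M\"obius inversion over the supersets of $A$, reducing everything to common-neighbour counts of sets of size at most $l\le k$ (controlled by $k$-isoregularity) plus purely internal data of $\Delta$. This is more elementary, avoids the poset induction entirely, and your separation of ``external'' from ``internal'' neighbours is exactly the right bookkeeping.

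The gap is in $(2)\Rightarrow(3)$, in your verification of hypothesis (3) of the Type-Counting Lemma. You assert that every $\widetilde\bbT\prec\bbT$ has strictly smaller second coordinate. That is false under the paper's definition of domination: the witnessing map $g\colon\Theta\to\widetilde\Theta$ is only required to be a surjective homomorphism that is not an isomorphism, so it may be a vertex bijection onto a graph with strictly \emph{more edges}. Such types are precisely the ones the Type-Counting Lemma is designed to correct for (images of cocones in which extra adjacencies appear between the two glued parts); they have order exactly $(j,r)$ and are not covered by your induction on $r$. Two repairs are available. Either note that for your particular decomposition the embedding $e=\mathrm{id}_{\Theta'}$ is an isomorphism, so the pushout is $\Theta$ itself, every mediating morphism is already an embedding, the correction sum is empty, and in fact $\#(\Gamma,\bbT,\kappa)=\#(\Gamma,\bbT_1,\kappa)\cdot\#(\Gamma,\bbT_2)$ holds directly without invoking hypothesis (3) at all; or add a secondary, descending induction on the number of edges of $\Theta$ within each fixed order $(j,r)$, which is what the paper's Lemma~\ref{indprinc} (induction over the finite poset $(P,\preceq)$) accomplishes in its proofs of Proposition~\ref{bigbase} and of the implication $(1)\Rightarrow(2)$. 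With either repair your argument goes through; as written, the stated justification is wrong.
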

\begin{proof}
    ``(1)$\Rightarrow$(2):'' Let $l\in\{1,\dots,\kk\}$, and let $P$ be a transversal of the isomorphism classes of graph types of order $(l,m)$ where $m\in\{l,l+1\}$. Without loss of generality we may assume for every graph type $\bbT=(\Delta,\iota,\Theta)\in P$ that $\iota$ is the identical embedding (\ie $\Delta$ is an induced subgraph of $\Theta$). By Lemma~\ref{fingraphtypes}, $(P,\preceq)$  is a finite poset. Moreover, for every $\bbT\in P$ of order $(l,m)$ and for every $\bbT'\prec\bbT$ we have  that the order of $\bbT'$ is $(l,n)$ for some $l\le n\le m$; hence there exists a unique $\bbT''\in P$ such that $\bbT'\cong\bbT''$.
    
    In the following we  show that $\Gamma$ is $\bbT$-regular, for all $\bbT\in P$. Let $\bbT=(\Delta,\iota,\Theta)$ be an element of $P$. Moreover, suppose that for all $\bbT'\prec\bbT$ from $P$ the graph $\Gamma$ is $\bbT'$-regular. If the order of $\bbT$ is $(l,l)$, then $\Gamma$ is $\bbT$-regular. So suppose that $\bbT$ has order $(l,l+1)$. Let $v$ be the unique vertex of $\Theta$ that is not in $V(\Delta)$. If $v$ has valency $l$, then $\Gamma$ is $\bbT$-regular, because $\Gamma$  is $\kk$-isoregular. So, suppose that the valency of $v$ is equal to $m<l$. Let $\hat{\Delta}$ be the subgraph of $\Delta$ induced by the neighbors of $v$, let $\hat\Theta$ be the subgraph of $\Theta$ induced by the vertices of $\hat\Delta$  together with $v$ itself, and let $\hat{\iota}\colon\hat\Delta\injto\hat\Theta$ be the identical embedding. 
	Then $\bbT_1:=(\Delta,1_\Delta,\Delta)$ and $\bbT_2:=(\hat\Delta,\hat\iota,\hat\Theta)$ are graph types. Moreover, $\bbT\cong \bbT_1\oplus_{e}\bbT_2$, where $e$ denotes the identical embedding of $\hat\Delta$ into $\Delta$. 
		\[
	\begin{tikzpicture}[scale=0.4,baseline={(current bounding box.center)}]
    \tkzDefPoints{0/0/Z,-2/0/X}
    \tkzDefPoint(45:6){A}
    \tkzDrawCircle[color=lightgray,ultra thick,R](Z,2cm)
    \tkzDrawCircle[color=lightgray,ultra thick,R](Z,4cm)
    \tkzTangent[from = A](Z,X)\tkzGetPoints{B}{C}
	\GraphInit[vstyle=Welsh]    
    \renewcommand*{\VertexSmallMinSize}{2pt}
    \SetVertexMath
	\Vertex[Node,Lpos=90,L={v}]{A}
	\Vertex[Node,empty]{B}
	\Edge(B)(A)\Edge(C)(A)
	\node [pin=180:$\hat\Delta$] at (-1,0){};
	\node [pin=-45:$\Delta$] at (2,-2){};
	\end{tikzpicture}\qquad
		\begin{tikzcd}
		\Delta \arrow[r,hook,"\iota", "="']& \Theta & V(\Theta)=V(\Delta)\cup\{v\}\\
		\hat\Delta\arrow[r,hook,"\hat\iota", "="']\arrow[u,hook,"e","="'] & \hat\Theta\arrow[u,hook,"="']	 & V(\hat\Theta)=V(\hat\Delta)\cup\{v\}.
	\end{tikzcd}
	\]
	Then $\bbT_1$ is of order $(l,l)$ thus, $\Gamma$ is $\bbT_1$-regular. Moreover, $\bbT_2$ is of order $(m,m+1)$ and the $\bbT_2$-regularity of $\Gamma$ follows from the $\kk$-isoregularity of $\Gamma$. Now, from the type counting lemma it follows that $\Gamma$ is $\bbT$-regular. Finally, from Lemma~\ref{indprinc} it follows that $\Gamma$ is regular for all types from $P$. In particular, $\Gamma$ is $(\lseq l,\lseq l+1)$-regular.

    ``(2)$\Rightarrow$(3):'' We show that $\Gamma$ is $(l,l+1)$-regular for all $l\in\{1,\dots,\kk\}$. We proceed by induction on $l$. For the induction base we note that  $\Gamma$ is $(1,2)$-regular if and only if it is $(\lseq 1,\lseq 1)$, and $(\lseq 1,\lseq 2)$-regular. The first regularity condition is trivially fulfilled and the $(\lseq 1,\lseq 2)$-regularity is given by assumption. Suppose, we know that $\Gamma$ is $(l,l+1)$-regular and $(\lseq l+1,\lseq l+2)$-regular, for some $1\le l\le \kk-1$. Then from the $(l,l+1)$-regularity follows immediately the $(l+1,l+1)$-regularity (indeed, a graph is $(l+1,l+1)$-regular iff it is $(l,l+1)$-regular and $(\lseq l+1,\lseq l+1)$-regular; however, trivially, every graph is $(\lseq l+1,\lseq l+1)$-regular). Moreover, we have that $\Gamma$ is $(\lseq l+1,l+2)$-regular, because $\Gamma$ is $(\lseq l+1,\lseq l+1)$-regular and $\Gamma$ is  $(\lseq l+1,\lseq l+2)$-regular. Hence, from Proposition~\ref{bigbase}, it follows that $\Gamma$ is $(l+1,l+2)$-regular. 
    
    ``(3)$\Rightarrow$(1):'' $\kk$-isoregularity of $\Gamma$ follows immediately from the $(\kk,\kk+1)$-regularity. 
\end{proof}

The following criterion by S.~Reichard  characterizes, when a $\kk$-isoregular
graph with the $(t-1)$-vertex condition satisfies the $t$-vertex condition:  
\begin{theorem}[{\cite[Theorem 3]{Reich00}}] Let $\Gamma$ be a $\kk$-isoregular graph that satisfies the $(t-1)$-vertex condition for $t>3$. Then, in order to verify the $t$-vertex condition, it suffices to test the $\mathbb{T}$-regularity for graph types $\mathbb{T}=(\Delta,\iota,\Theta)$ of order $(2,t)$ with the property that all vertices of $\Theta$ that are not in the image of $\iota$ have valency $\ge \kk+1$.
\end{theorem}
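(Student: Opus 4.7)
The plan is to show that $\Gamma$ is $(2,t)$-regular, which by the earlier lemma coincides with the $t$-vertex condition. Fix a transversal $P$ of isomorphism classes of graph-types of order $(2,y)$ with $y\le t$; by Lemma~\ref{fingraphtypes}, $P$ is finite, and $(P,\preceq)$ is a finite poset. The argument runs by applying the induction principle of Lemma~\ref{indprinc} to the set $B\subseteq P$ of those types for which $\Gamma$ is regular.

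So take $\bbT=(\Delta,\iota,\Theta)\in P$ of order $(2,y)$ and assume $\bbT''\in B$ for every $\bbT''\prec\bbT$ in $P$. If $y<t$, then the hypothesized $(t-1)$-vertex condition already yields $\bbT$-regularity. If $y=t$ and every vertex in $V(\Theta)\setminus\im(\iota)$ has valency $\ge k+1$ in $\Theta$, then $\bbT$-regularity holds by assumption of the theorem. The substantive case is $y=t$ with some $v\in V(\Theta)\setminus\im(\iota)$ of valency $l\le k$ in $\Theta$.

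For this case I would peel $v$ off the type. Let $N$ be its neighborhood in $\Theta$, so $|N|=l$, and set $\Theta':=\Theta(V(\Theta)\setminus\{v\})$, $\hat\Delta:=\Theta(N)$, $\hat\Theta:=\Theta(N\cup\{v\})$, and let $\iota':\Delta\injto\Theta'$, $\hat\iota:\hat\Delta\injto\hat\Theta$, $e:\hat\Delta\injto\Theta'$ be the obvious inclusions (well-defined since $v\notin\im(\iota)$ and $v\notin N$). Then $\bbT_1:=(\Delta,\iota',\Theta')$ has order $(2,t-1)$, $\bbT_2:=(\hat\Delta,\hat\iota,\hat\Theta)$ has order $(l,l+1)$ with $l\le k$, and the pushout construction gives $\bbT\cong\bbT_1\oplus_e\bbT_2$ because $v$ has no neighbors outside $N$, so gluing $\hat\Theta$ to $\Theta'$ along $\hat\Delta$ reconstructs $\Theta$. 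The $(t-1)$-vertex condition supplies $\bbT_1$-regularity, Proposition~\ref{kisoregcond} applied to the $k$-isoregularity of $\Gamma$ supplies $\bbT_2$-regularity (with the edge case $l=0$ being the trivial identity $\#(\Gamma,\bbT_2)=|V(\Gamma)|$), and every $\widetilde\bbT\prec\bbT$ lies in $P\cap B$ by the induction hypothesis. The type-counting lemma now closes the induction step and delivers $\bbT\in B$.

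The main obstacle is producing and justifying the decomposition: one must locate a vertex of low valency and then verify that the split realises $\bbT$ precisely as the free sum $\bbT_1\oplus_e\bbT_2$, with $\bbT_1$ landing in the range already handled by the $(t-1)$-vertex condition and $\bbT_2$ landing in the range handled by Proposition~\ref{kisoregcond}. Once this is in place, all remaining work is absorbed by the type-counting lemma and the finite-poset induction of Lemma~\ref{indprinc}, with only minor bookkeeping needed for the degenerate case $l=0$ where $\hat\Delta$ is the empty graph and the free sum reduces to a disjoint union with a singleton.
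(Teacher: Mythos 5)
Your argument is correct and is essentially the paper's own route: the statement is obtained there as a special case of Proposition~\ref{redlist}, whose proof runs the same finite-poset induction via Lemma~\ref{indprinc} and closes each step with the type-counting lemma, and your peeling-off of a low-valency vertex to realise $\bbT\cong\bbT_1\oplus_e\bbT_2$ is exactly the decomposition used in the proof of Proposition~\ref{kisoregcond}. The only loose end is that a transversal of order-$(2,y)$ types yields only $[2,t)$-regularity rather than $(2,t)$-regularity, which is closed by Theorem~\ref{Ivanov} (equivalently Proposition~\ref{bigbase}) since $\Gamma$ is regular.
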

Our next goal is to generalize this result: 
\begin{proposition}\label{redlist} Let
$\Gamma$ be an  $(m,t)$-regular graph. Let $\mathcal{M}$ be a set of graph types and suppose that $\Gamma$ is $\bbT$-regular, for all $\bbT\in \cM$. Then, in order to verify the $(m,t+1)$-regularity of $\Gamma$
it suffices to test the $\mathbb{T}$-regularity for graph types of order $(m,t+1)$ that are $\widehat\bbT$-irreducible for all $\widehat\bbT\in\cM$. 
\end{proposition}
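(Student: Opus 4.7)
The plan is to show, for every graph-type $\bbT$ of order $(m,l)$ with $l \le t+1$, that $\Gamma$ is $\bbT$-regular. For $l \le t$ this is immediate from the standing hypothesis that $\Gamma$ is $(m,t)$-regular, so the only case requiring work is $l = t+1$. Fix such a $\bbT$.

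If $\bbT$ happens to be $\bbT'$-irreducible for every $\bbT'\in\cM$, then $\Gamma$ is $\bbT$-regular by the hypothesis of the proposition. Otherwise there exist $\bbT'\in\cM$, a graph-type $\bbT_1\not\cong\bbT$, and an embedding $e$ with $\bbT\cong\bbT_1\oplus_e\bbT'$. I would then apply the Type-counting-lemma to this decomposition and check its three premises in turn. First, by the remark following the definition of $\bbT_2$-reducibility, $\bbT_1$ is obtained from $\bbT$ by discarding a non-empty vertex set $M_2$, so its order is $(m,l_1)$ with $l_1\le t$; hence $\bbT_1$-regularity of $\Gamma$ follows from $(m,t)$-regularity. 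Second, $\bbT'$-regularity holds by assumption, since $\bbT'\in\cM$. Third, for every $\widetilde\bbT\prec\bbT$, the surjection witnessing $\widetilde\bbT\preceq\bbT$ fails to be an isomorphism, so $\widetilde\bbT$ has order $(m,l_2)$ with $l_2<t+1$, whence $l_2\le t$ and the $\widetilde\bbT$-regularity again follows from $(m,t)$-regularity.

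Granted these three premises, the Type-counting-lemma delivers the $\bbT$-regularity of $\Gamma$, completing the case $l=t+1$ and hence the proof. I do not anticipate a serious obstacle: the statement is essentially a direct corollary of the Type-counting-lemma, once one records that the decomposition produced by $\bbT'$-reducibility strictly decreases the order of the residual piece $\bbT_1$, and that $\widetilde\bbT\prec\bbT$ strictly decreases the order of the Theta-part. The only subtlety is the bookkeeping of these orders, but everything stays within the $(m,t)$-regular regime already covered by the hypothesis, so no genuine induction beyond the Type-counting-lemma itself is needed.
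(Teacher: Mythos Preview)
Your argument has a genuine gap in the verification of the third premise of the Type-counting-lemma. You claim that for every $\widetilde\bbT\prec\bbT$, the witnessing surjection $g:\Theta\to\widetilde\Theta$ fails to be an isomorphism and therefore $\widetilde\bbT$ has strictly fewer vertices than $\bbT$. This inference is incorrect: a surjective graph homomorphism that is not an isomorphism may very well be a bijection on vertices whose inverse fails to preserve edges. Concretely, if $h:\Theta\to\Gamma$ is injective but its image spans extra edges in $\Gamma$, then the induced subgraph $\widetilde\Theta$ has the same vertex count $t+1$ as $\Theta$ but strictly more edges, and the resulting $\widetilde\bbT$ satisfies $\widetilde\bbT\prec\bbT$ while still having order $(m,t+1)$. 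For such $\widetilde\bbT$ you cannot appeal to $(m,t)$-regularity.

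This is exactly why the paper's proof is organised as a genuine induction over the finite poset $(P,\preceq)$ via Lemma~\ref{indprinc}: when one arrives at $\bbT$, the inductive hypothesis already supplies $\widetilde\bbT$-regularity for \emph{all} $\widetilde\bbT\prec\bbT$, including those of order $(m,t+1)$ with more edges. Your remark that ``no genuine induction beyond the Type-counting-lemma itself is needed'' is therefore mistaken; the $\preceq$-induction is doing real work. A secondary point: even after establishing $[m,t+1)$-regularity, you still need to invoke Proposition~\ref{bigbase} (together with the $(m,m)$-regularity inherited from $(m,t)$-regularity) to upgrade to $(m,t+1)$-regularity, as the paper does in its final sentence.
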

\begin{proof}
    Let $P$ be a transversal of the isomorphism classes of graph types of order $(m,t+1)$. Then, by Lemma~\ref{fingraphtypes},  $(P,\preceq)$ is a finite poset. Moreover, whenever $\bbT\in P$ and $\bbT'\preceq\bbT$ is a graph type of order $(m,t+1)$, then $\bbT'$ is isomorphic to an element of $P$.
    
      We will use the induction principle from Lemma~\ref{indprinc} on  $(P,\preceq)$: Let $\bbT=(\Delta,\iota,\Theta)\in P$ and suppose that $\Gamma$ is $\bbT'$-regular for all $\bbT'\in P$ with $\bbT'\prec\bbT$. Note that for every graph type $\bbT''\prec\bbT$ we either have that $\bbT''$ is isomorphic to an element of $P$ or it has order $(m,l)$ for some $l< t+1$. In both cases we conclude that $\Gamma$ is $\bbT''$-regular.
      
      If $\bbT$ is $\widehat\bbT$-irreducible for all $\widehat\bbT\in\cM$, then $\Gamma$ is $\bbT$-regular, by assumption. So suppose that there exists a $\widehat\bbT\in\cM$, such that $\bbT$ is $\widehat\bbT$-reducible. Then $\bbT\cong\bbT_1\oplus_e\widehat\bbT$ for some graph type $\bbT_1\ncong\bbT$. But then the order of $\bbT_1$ is $(m,l)$, for some $l<t+1$. Hence, by assumption $\Gamma$ is $\bbT_1$-regular and $\widehat\bbT$-regular. By the type counting lemma we obtain that $\Gamma$ is $\bbT$-regular. 
        
    Now, it remains to invoke Lemma~\ref{indprinc}, to obtain that $\Gamma$ is regular for all types from $P$. Consequently, $\Gamma$ is $(\lseq m,\lseq t+1)$-regular. By assumption, $\Gamma$ is $(m,t)$- and in particular $(m,m)$-regular. Hence, by Proposition~\ref{bigbase}, we have that  $\Gamma$ is $(m,t+1)$-regular. 
\end{proof}

\begin{proposition}\label{mncond}
    Let $\Gamma$ be a graph. Then $\Gamma$ is $(m,n+1)$-regular if and only if $\Gamma$ is   $(m,n)$-regular and it is $\bbT$-regular for every $(m,n+1)$-irreducible graph type $\bbT$ of order $(m,n+1)$.
\end{proposition}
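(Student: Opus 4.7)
The forward direction is a matter of unpacking definitions: $(m,n+1)$-regularity demands $\bbT$-regularity for every graph-type with base size $\le m$ and total size $\le n+1$, which subsumes both $(m,n)$-regularity and $\bbT$-regularity for every type of order $(m,n+1)$, irreducible or not.

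For the converse my plan is to reduce the goal via Proposition~\ref{bigbase}. Since $(m,n)$-regularity entails $(m,m)$-regularity, that proposition tells us that $(m,n+1)$-regularity is equivalent to $[m,n+1)$-regularity, and the instances $[m,l]$ for $m\le l\le n$ are already delivered by the $(m,n)$-regularity hypothesis. Hence it suffices to verify $\bbT$-regularity for every graph-type $\bbT=(\Delta,\iota,\Theta)$ of order $(m,n+1)$. If $\bbT$ is $(m,n+1)$-irreducible, this is immediate from the second hypothesis; otherwise, I would fix a decomposition $\bbT\cong\bbT_1\oplus_e\bbT_2$ witnessing $(m,n+1)$-reducibility, so $\bbT_1,\bbT_2\not\cong\bbT$ and $\bbT_2$ has order $(k,l)$ with $k\le m$ and $l\le n+1$.

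The finish is the Type-counting lemma, for which I need to check $\bbT_1$-regularity, $\bbT_2$-regularity, and $\widetilde\bbT$-regularity for each $\widetilde\bbT\prec\bbT_1\oplus_e\bbT_2$. Using the description $V(\Theta)=M_1\sqcup M_2\sqcup M_3$ from the remark following the definition of $\bbT_2$-reducibility, $\bbT_1=(\Delta,\iota,\Theta(M_1\cup M_3))$ has total size $(n+1)-|M_2|\le n$; and each predecessor $\widetilde\bbT\prec\bbT$ inherits a base of size $m$ (because $\preceq$ is defined via an isomorphism on bases) but strictly smaller total size, hence $\le n$. Both $\bbT_1$ and every such $\widetilde\bbT$ are therefore covered by the $(m,n)$-regularity hypothesis.

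The main obstacle is controlling $\bbT_2$, where I must rule out the boundary case $l=n+1$. My plan is to argue that $l=n+1$ would force $M_1=\emptyset$, so $\iota$ would factor through $\Theta(M_3)$ whose size $k$ satisfies $m=|\Delta|\le k\le m$, whence $k=m$ and $\iota$ becomes an isomorphism onto $\Theta(M_3)$; one then checks directly that $(\iota,1_\Theta)$ exhibits $\bbT\cong\bbT_2$, contradicting reducibility. This leaves $l\le n$, again inside the range of the $(m,n)$-regularity hypothesis, after which the Type-counting lemma delivers $\bbT$-regularity of $\Gamma$ and the proof is complete.
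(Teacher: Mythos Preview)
Your argument is correct. The forward direction, the reduction via Proposition~\ref{bigbase} to $[m,n+1]$-regularity, the handling of $\bbT_1$ and of the predecessors $\widetilde\bbT\prec\bbT$, and the exclusion of the boundary case $l=n+1$ all go through as you describe.

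Your route differs slightly from the paper's. The paper packages the converse through Proposition~\ref{redlist}: it takes $\mathcal{M}$ to be a transversal of all types of order $(k,l)$ with $k\le m$, $l\le n$, and then proves the auxiliary claim that a type of order $(m,n+1)$ is $(m,n+1)$-reducible if and only if it is $\bbT'$-reducible for some $\bbT'\in\mathcal{M}$. That proposition carries its own poset induction (Lemma~\ref{indprinc}) inside it. You bypass Proposition~\ref{redlist} entirely and invoke the Type-counting lemma directly, observing that for a reducible $\bbT$ of order $(m,n+1)$ the components $\bbT_1$, $\bbT_2$, and every $\widetilde\bbT\prec\bbT$ already have total size at most $n$ and are therefore handled outright by the $(m,n)$-regularity hypothesis---no induction needed at this level. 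The key computation (forcing $k=m$ and $\bbT_2\cong\bbT$ when $l=n+1$) is the same in both proofs; your version is marginally more self-contained, while the paper's version showcases Proposition~\ref{redlist} as a reusable tool.
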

\begin{proof}

``$\Rightarrow$:'' This is clear.

``$\Leftarrow$:''  Let $\cM$ be a transversal of the isomorphism classes of  graph types of order $(k,l)$, where $k\le m$ and where $l\le n$. By Assumption, $\Gamma$ is regular for all graph types from $\cM$. 
By Proposition~\ref{redlist}, in order to show that $\Gamma$ is $(m,n+1)$-regular it suffices to show that $\Gamma$ is $\bbT$-regular, for all graph types $\bbT$ of order $(m,n+1)$ that are $\widehat\bbT$-irreducible, for all $\widehat\bbT\in\cM$. 

It remains to show that a graph type $\bbT$ of order $(m,n+1)$ is $(m,n+1)$-reducible if and only if it is $\widehat\bbT$-reducible for some $\widehat\bbT\in\cM$. 

So suppose that $\bbT$ is $(m,n+1)$-reducible. Then $\bbT\cong\bbT_1\oplus_e\bbT_2$ such that $\bbT_2$ has order $(k,k')$ where $k\le m$ and $k'\le n+1$, and where neither $\bbT_1$ nor $\bbT_2$ is isomorphic to $\bbT$.  It follows that $\bbT$ is $\bbT_2$-reducible. Note that $\bbT_1$ has order $(m,k'')$, where $k''\le n$, for otherwise $\bbT_1\cong\bbT$. In particular, by assumption, $\Gamma$ is $\bbT_1$-regular. It remains to show that $\bbT_2$ is isomorphic to an element of $\cM$, or, in other words, that $k'\le n$.  Suppose on the contrary that $k'=n+1$.   Then $k''=k$ because $k'-k+k''=n+1$. Since $k\le m$ and $k''\ge m$, we conclude $k''=k=m$. Thus $\bbT_2$ has the same order as $\bbT$. It follows that $\bbT_2$ and $\bbT$ are isomorphic, a contradiction. Hence $k'\le n$.  

Suppose now that $\bbT$ is $\widehat\bbT$-reducible for some $\widehat\bbT\in\cM$. Then $\bbT\cong\bbT_1\oplus_e\widehat\bbT$, where $\bbT_1\ncong\bbT$. Suppose that the order of $\widehat\bbT$ is $(k,l)$. Then $k\le m$ and $l\le n$.  It follows that $\bbT\ncong\widehat\bbT$. From the definition of $(\lseq m,\lseq n+1)$-irreducibility, it follows that $\bbT$ is $(m,n+1)$-reducible. 
\end{proof}

\begin{corollary}\label{tvertcond}
	A graph $\Gamma$ is $(m,n+1)$-regular if and only if it is $(m,n)$-regular and it is $\bbT$-regular for all graph types $\bbT$ of order $(m,n+1)$ for which $\Cl(\bbT)$ is $(m+1)$-connected.
\end{corollary}
\begin{proof}
	This follows immediately from Proposition~\ref{mncond} in conjunction with Lemma~\ref{connectcond}.
\end{proof}

\begin{definition}
    Let $\Gamma$ be a graph and let $v\in V(\Gamma)$. Then with $\Gamma_1(v)$ we  denote the subgraph of $\Gamma$ induced by the neighbors of $v$. Moreover, with $\Gamma_2(v)$ we denote the subgraph of $\Gamma$ induced by the non-neighbors of $v$  (except $v$ itself). 
    $\Gamma_1(v)$ and $\Gamma_2(v)$ are called the \emph{first} and the \emph{second subconstituent} of $\Gamma$ with respect to $v$, respectively.
\end{definition}
The following proposition relates the regularities of a graph with the regularities of its subconstituents. This is used later on to identify a new class of graphs satisfying  the $6$-vertex condition:
\begin{proposition}\label{constituents}
    Let $\Gamma$ be an $(m,n)$-regular graph where $m\ge 1$, and let $v\in V(\Gamma)$. Then $\Gamma_1(v)$ and $\Gamma_2(v)$ are both $(m-1,n-1)$-regular. 
\end{proposition}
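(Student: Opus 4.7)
The plan is to reduce the $(m-1,n-1)$-regularity of each subconstituent to the $(m,n)$-regularity of $\Gamma$ by enlarging every candidate graph-type with a single vertex representing $v$. Fix $v\in V(\Gamma)$, and let $\bbT=(\Delta,\iota,\Theta)$ be a graph-type of order $(k,l)$ with $k\le m-1$ and $l\le n-1$. Define $\bbT^{+}_1=(\Delta^+,\iota^+,\Theta^+)$ by adjoining a fresh vertex $\star$ to both $\Delta$ and $\Theta$, making $\star$ adjacent to every old vertex in each case, and letting $\iota^+$ extend $\iota$ by $\star\mapsto\star$. Symmetrically, define $\bbT^{+}_2$ by adjoining $\star$ as a vertex non-adjacent to every old vertex. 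Both types have order $(k+1,l+1)$ with $k+1\le m$ and $l+1\le n$, so by the $(m,n)$-regularity of $\Gamma$, the graph $\Gamma$ is $\bbT^{+}_1$-regular and $\bbT^{+}_2$-regular.

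Next, I would set up the natural bijection between embeddings. For $\Gamma_1(v)$: every embedding $\kappa:\Delta\injto\Gamma_1(v)$ extends uniquely to an embedding $\kappa^+:\Delta^+\injto\Gamma$ by setting $\kappa^+(\star)=v$; this really is an embedding because every vertex of $\Gamma_1(v)$ is a neighbor of $v$ and distinct from $v$, matching the adjacency pattern of $\star$ in $\Delta^+$. Conversely, any embedding $\kappa^+:\Delta^+\injto\Gamma$ with $\kappa^+(\star)=v$ sends the remaining vertices into $\Gamma_1(v)$, giving back a $\kappa$. The same correspondence works on the $\Theta$-side: extensions $\hat\kappa:\Theta\injto\Gamma_1(v)$ of $\kappa$ along $\iota$ correspond bijectively to extensions $\hat\kappa^+:\Theta^+\injto\Gamma$ of $\kappa^+$ along $\iota^+$, because any such $\hat\kappa^+$ is forced to send $\star$ to $v$ (as $\star\in\iota^+(\Delta^+)$ and the extension condition gives $\hat\kappa^+(\star)=\kappa^+(\star)=v$), after which the embedding property forces every other image into $\Gamma_1(v)$. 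Hence
\[
\#(\Gamma_1(v),\bbT,\kappa)=\#(\Gamma,\bbT^{+}_1,\kappa^+),
\]
and the right-hand side is independent of $\kappa^+$ by $\bbT^{+}_1$-regularity of $\Gamma$, so it is independent of $\kappa$, proving the $\bbT$-regularity of $\Gamma_1(v)$.

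The case of $\Gamma_2(v)$ is handled identically, replacing $\bbT^{+}_1$ by $\bbT^{+}_2$ and using that $V(\Gamma_2(v))$ consists exactly of the vertices of $\Gamma$ that are distinct from $v$ and non-adjacent to $v$, matching the adjacency role of the isolated $\star$ in $\Delta^+$ and $\Theta^+$.

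The one point that needs care is the verification that the bijection on extensions is exact in both directions. In particular I would note that the order of $\bbT^{+}_i$ is $(k+1,l+1)$ even when $k=0$: then $\Delta^+$ is a single vertex that is mapped to $v$, and the whole argument still works, covering the boundary case $m=1$ where $(m-1,n-1)$-regularity amounts to $(0,n-1)$-regularity of the subconstituent.
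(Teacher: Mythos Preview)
Your argument is correct and is precisely the natural one: the paper itself omits the proof (the statement carries only a \qed), and your construction of $\bbT^{+}_1$ and $\bbT^{+}_2$ together with the bijection between extensions is exactly how one fills in the details. The only remark is that your discussion of the boundary case $m=1$ is unnecessary, since any graph is trivially $(0,l)$-regular (there is a unique embedding of the empty graph), but this does no harm.
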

\begin{proof}
	About  $\Gamma_1(v)$: Let $\bbT=(\Delta,\iota,\Theta)$ be a graph type of order $(r,s)$ where $r\le m-1$ and $s\le n-1$. Let $\Delta':=\Delta+\{x\}$ and $\Theta':=\Theta+\{y\}$ be graphs obtained from $\Delta$ and $\Theta$ by adjoining a single new vertex that is connected to all old vertices, respectively. Let $\iota'\colon\Delta'\injto\Theta'$ be defined according to
	\[
	\iota'\colon  w\mapsto\begin{cases}
		\iota(w) & w\in V(\Delta)\\
		y & w=x.
	\end{cases}
	\]
	Then $\bbT':=(\Delta',\iota',\Theta')$ is a graph type of order $(r+1,s+1)$. As $r+1\le m$ and $s+1\le n$, we have that $\Gamma$ is $\bbT'$-regular. Let $\kappa\colon\Delta\injto\Gamma_1(v)$. Define $\kappa'\colon\Delta'\injto\Gamma$ according to
	\[
	\kappa'\colon w\mapsto\begin{cases}
		\kappa(w) & w\in V(\Delta),\\
		v & w=x.
	\end{cases}
	\]
	We claim that there is a bijection between set of extensions of $\kappa$ along $\iota$ in $\Gamma_1(v)$ and the set of extensions of $\kappa'$ along $\iota'$ in $\Gamma$:
	
	Let $\hat\kappa$ be any extension of $\kappa$ along $\iota$ in $\Gamma_1(v)$. We define  $\hat\kappa'\colon\Theta'\injto\Gamma$ according to 
	\[
	\hat\kappa'\colon w\mapsto\begin{cases}
		\hat\kappa(w) & w\in V(\Theta),\\
		v & w=y.
	\end{cases}
	\]
	Clearly, $\hat\kappa'$ is an extension of $\kappa'$ along $\iota'$ in $\Gamma$. 
	
	Let on the other hand $\hat\kappa'$ be any extension of $\kappa'$ along $\iota'$ in $\Gamma$. Then $\hat\kappa:=\hat\kappa'\restr_{V(\Theta)}$ is an extension of $\kappa$ along $\iota$ in $\Gamma_1(v)$. This establishes the desired bijection between extensions of $\kappa$ along $\iota$ in $\Gamma_1(v)$ and extensions of $\kappa'$ along $\iota'$ in $\Gamma$. In particular, we  have $\#(\Gamma_1(v), \bbT,\kappa)= \#(\Gamma, \bbT',\kappa')=\#(\Gamma,\bbT')$. Thus, $\Gamma_1(v)$ is $\bbT$-regular. As $\bbT$ was chosen arbitrarily, we conclude that $\Gamma_1(v)$ is $(m-1,n-1)$-regular.
	
	About $\Gamma_2(v)$: From Lemma~\ref{complreg} it follows that $\overline{\Gamma}$ is $(m,n)$-regular. Clearly, we have $\overline{\Gamma}_1(v)=\overline{\Gamma_2(v)}$. From the first part of the proof it follows that $\overline{\Gamma}_1(v)$ is $(m-1,n-1)$-regular. Again using Lemma~\ref{complreg} we conclude that $\Gamma_2(v)$ is $(m-1,n-1)$-regular. 
\end{proof}
\begin{remark}
	The previous proposition generalizes a well-known result from algebraic graph theory to the case of $(m,n)$-regular graphs. Namely, if a graph is $\kk+1$-isoregular, then all subconstituents are $\kk$-isoregular. This observation, together with spectral methods, stands at the center of  Gol'fand's lost proof that $5$-isoregular graphs are homogeneous (see \cite[Section 9.2]{Rei15} for a historical account and for further references). 
\end{remark}

\section{Checking the \texorpdfstring{$t$}{t}-vertex condition} 
Every graph satisfies the 1-vertex condition. A graph satisfies the 2-vertex condition if and only if it is regular. A bit less obvious but rather straight forward is the observation that a graph satisfies the 3-vertex condition if and only if it is strongly regular, \ie it is regular and the number of joint neighbors of every edge is equal to a constant $\lambda$ and the number of joint neighbors of every non-edge is equal to a constant $\mu$. A criterion for the $4$-vertex condition is given by:
\begin{theorem}[M.D.~Hestenes, D.G.~Higman \cite{HesHig71}]\label{HesHig}
Let $\Gamma$ be a strongly regular graph. Then, in order to verify the $4$-vertex condition it suffices to test the $\mathbb{T}$-regularity for the following two graph types of order $(2,4)$:
\[
\begin{tikzpicture}
\SetGraphUnit{1}
    \GraphInit[vstyle=Welsh]
    \renewcommand*{\VertexSmallMinSize}{2pt}
    \SetVertexMath
    \Vertex[Lpos=-135,NoLabel]{x}
    \EA[Lpos=-45,NoLabel](x){y}
    \NO[NoLabel](x){u}
    \NO[NoLabel](y){v}
    \Edges(v,x,u,v,y,u)
    \Edges(x,y)
    \AddVertexColor{black}{x,y}
\end{tikzpicture}\qquad
\begin{tikzpicture}
\SetGraphUnit{1}
    \GraphInit[vstyle=Welsh]
    \renewcommand*{\VertexSmallMinSize}{2pt}
    \SetVertexMath
    \Vertex[Lpos=-135,NoLabel]{x}
    \EA[Lpos=-45,NoLabel](x){y}
    \NO[NoLabel](x){u}
    \NO[NoLabel](y){v}
    \Edges(v,x,u,v,y,u)
    \AddVertexColor{black}{x,y}
\end{tikzpicture}
\]
\end{theorem}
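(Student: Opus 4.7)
The plan is to derive this theorem as an almost immediate consequence of Corollary~\ref{tvertcond} together with the observation recorded in the Example just preceding the theorem. First I would recall that a strongly regular graph is exactly a $(2,3)$-regular graph, i.e.\ one satisfying the $3$-vertex condition. Applying Corollary~\ref{tvertcond} with $t=3$, the task of verifying the $4$-vertex condition (that is, $(2,4)$-regularity) reduces to checking $\bbT$-regularity for those graph-types $\bbT$ of order $(2,4)$ whose closure $\Cl(\bbT)$ is $3$-connected.

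Next I would use the classification of small $3$-connected graphs: on $4$ vertices, the only $3$-connected graph is $K_4$ (a graph on $4$ vertices that is $3$-connected cannot miss any edge, since removing the two endpoints of a missing edge would leave two isolated vertices). Hence the required condition reduces to $\bbT$ such that $\Cl(\bbT)\cong K_4$.

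Now I would unwind the definition of $\Cl$. If $\bbT=(\Delta,\iota,\Theta)$ is of order $(2,4)$ and $S\subseteq V(\Theta)$ is the image of $\iota$ (with $|S|=2$), then $\Cl(\bbT)$ is obtained from $\Theta$ by adding, if necessary, the single edge between the two vertices of $S$. Therefore $\Cl(\bbT)\cong K_4$ forces $\Theta$ itself to differ from $K_4$ by at most that one edge: either $\Theta\cong K_4$ with $S$ spanning an edge of $\Theta$, or $\Theta\cong K_4$ minus the edge on $S$. These are exactly the two pictured types in the statement, and the theorem follows.

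The entire argument is really a bookkeeping exercise on top of Corollary~\ref{tvertcond}; there is no substantive obstacle, the only thing to be careful about is to match the two cases in the picture with the two choices of whether the distinguished pair is an edge or a non-edge. No separate argument is needed.
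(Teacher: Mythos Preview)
Your approach is exactly the one the paper indicates: it explicitly states that Theorem~\ref{HesHig} ``is a special case of Corollary~\ref{tvertcond}'' and the preceding Example already records that $K_4$ is the only $3$-connected graph of order $4$ and lists the resulting two $(2,4)$-irreducible types. One small slip in your parenthetical justification: to see that a $4$-vertex graph missing an edge is not $3$-connected, you should remove the two vertices \emph{not} incident with the missing edge (leaving the two non-adjacent endpoints), not the endpoints themselves; your conclusion is of course correct.
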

In our terminology, this is a special case of Corollary~\ref{tvertcond} ($m=2$ and $n=3$, cf.~Example~\ref{exHesHig}). More generally, we have:
\begin{proposition}\label{cond-threecon}
    Let $\Gamma$ be a graph that satisfies the $t$-vertex condition for $t\ge 3$. Then, in order to verify the $(t+1)$-vertex condition it suffices to test the $\bbT$-regularity for all those graph types $\bbT$ of order $(2,t+1)$ for which $\Cl(\bbT)$ is $3$-connected.
\end{proposition}
\begin{proof}
	This is a special case of Corollary~\ref{tvertcond} ($m=2$, $n=t$).
\end{proof}

In \cite[Theorem 4.9]{Reich03} S.~Reichard proved  that a graph satisfying the $4$-vertex condition  satisfies the $5$-vertex condition if and only if it is regular for a list of $16$ graph types. The following proposition reduces the number of graph types to be tested to $10$:
\begin{proposition}\label{five-cond} 
	Given a graph $\Gamma$ that fulfills the $4$-vertex condition.  Then in order to test whether $\Gamma$ satisfies also the $5$-vertex condition it suffices to count the graph types in the table below.  
    \[ 
    \begin{matrix}
    \begin{tikzpicture}[rotate=-126,scale=0.3]
        \SetGraphUnit{1}
        \GraphInit[vstyle=Welsh]
        \renewcommand*{\VertexSmallMinSize}{2pt}
        \SetVertexMath
        \Vertices[unit=3,NoLabel]{circle}{A,B,C,D,E}
        \Edges(B,C,D,E,A,C,E,B,D,A)
        \Edge(A)(B)
        \AddVertexColor{black}{A,B}
    \end{tikzpicture}&
    \begin{tikzpicture}[rotate=-126,scale=0.3]
        \SetGraphUnit{1}
        \GraphInit[vstyle=Welsh]
        \renewcommand*{\VertexSmallMinSize}{2pt}
        \SetVertexMath
        \Vertices[unit=3,NoLabel]{circle}{A,C,D,B,E}
        \Edges(B,C,D,E,A)
        \Edges(B,E,C)
        \Edges(A,D,B)
        \Edge(A)(C)
        \AddVertexColor{black}{A,C}
    \end{tikzpicture}&
    \begin{tikzpicture}[rotate=-126,scale=0.3]
        \SetGraphUnit{1}
        \GraphInit[vstyle=Welsh]
        \renewcommand*{\VertexSmallMinSize}{2pt}
        \SetVertexMath
        \Vertices[unit=3,NoLabel]{circle}{C,D,A,E,B}
        \Edge(B)(C)
        \Edges(D,E,A)
        \Edges(B,E,C)
        \Edges(A,D,B)
        \Edge(A)(C)
        \Edge(C)(D)
        \AddVertexColor{black}{C,D}
    \end{tikzpicture}&
    \begin{tikzpicture}[rotate=-126,scale=0.3]
        \SetGraphUnit{1}
        \GraphInit[vstyle=Welsh]
        \renewcommand*{\VertexSmallMinSize}{2pt}
        \SetVertexMath
        \Vertices[unit=3,NoLabel]{circle}{A,E,C,B,D}
        \Edges(B,C,E,B)
        \Edges(B,D,E)
        \Edge(A)(D)
        \Edge(A)(C)
        \Edge(A)(E)
        \AddVertexColor{black}{A,E}
    \end{tikzpicture}&
    \begin{tikzpicture}[rotate=-126,scale=0.3]
        \SetGraphUnit{1}
        \GraphInit[vstyle=Welsh]
        \renewcommand*{\VertexSmallMinSize}{2pt}
        \SetVertexMath
        \Vertices[unit=3,NoLabel]{circle}{A,C,B,E,D}
        \Edges(B,C,E,B)
        \Edges(B,D,E)
        \Edge(A)(D)
        \Edge(A)(C)
        \Edge(A)(E)
        \AddVertexColor{black}{A,C}
    \end{tikzpicture}\\
    \begin{tikzpicture}[rotate=-126,scale=0.3]
        \SetGraphUnit{1}
        \GraphInit[vstyle=Welsh]
        \renewcommand*{\VertexSmallMinSize}{2pt}
        \SetVertexMath
        \Vertices[unit=3,NoLabel]{circle}{A,B,C,D,E}
        \Edges(B,C,D,E,A,C,E,B,D,A)
        \AddVertexColor{black}{A,B}
    \end{tikzpicture}&
    \begin{tikzpicture}[rotate=-126,scale=0.3]
        \SetGraphUnit{1}
        \GraphInit[vstyle=Welsh]
        \renewcommand*{\VertexSmallMinSize}{2pt}
        \SetVertexMath
        \Vertices[unit=3,NoLabel]{circle}{A,C,D,B,E}
        \Edges(B,C,D,E,A)
        \Edges(B,E,C)
        \Edges(A,D,B)
        \AddVertexColor{black}{A,C}
    \end{tikzpicture}&
    \begin{tikzpicture}[rotate=-126,scale=0.3]
        \SetGraphUnit{1}
        \GraphInit[vstyle=Welsh]
        \renewcommand*{\VertexSmallMinSize}{2pt}
        \SetVertexMath
        \Vertices[unit=3,NoLabel]{circle}{C,D,A,E,B}
        \Edge(B)(C)
        \Edges(D,E,A)
        \Edges(B,E,C)
        \Edges(A,D,B)
        \Edge(A)(C)
        \AddVertexColor{black}{C,D}
    \end{tikzpicture}&
    \begin{tikzpicture}[rotate=-126,scale=0.3]
        \SetGraphUnit{1}
        \GraphInit[vstyle=Welsh]
        \renewcommand*{\VertexSmallMinSize}{2pt}
        \SetVertexMath
        \Vertices[unit=3,NoLabel]{circle}{A,E,C,B,D}
        \Edges(B,C,E,B)
        \Edges(B,D,E)
        \Edge(A)(D)
        \Edge(A)(C)
        \AddVertexColor{black}{A,E}
    \end{tikzpicture}&
    \begin{tikzpicture}[rotate=-126,scale=0.3]
        \SetGraphUnit{1}
        \GraphInit[vstyle=Welsh]
        \renewcommand*{\VertexSmallMinSize}{2pt}
        \SetVertexMath
        \Vertices[unit=3,NoLabel]{circle}{A,C,B,E,D}
        \Edges(B,C,E,B)
        \Edges(B,D,E)
        \Edge(A)(D)
        \Edge(A)(E)
        \AddVertexColor{black}{A,C}
    \end{tikzpicture}
    \end{matrix} 
\]
\end{proposition}
\begin{proof}
	According to Proposition~\ref{cond-threecon}, $\Gamma$ satisfies the $5$-vertex condition if and only if it is $\bbT$-regular for all $\bbT$ such that $\Cl(\bbT)$ is $3$-connected. So we start by enumerating all $3$-connected graphs of order $5$. While this task is routine for a human, it is trivial for a computer. Here and below we  used the \texttt{geng}-utility from the package \emph{nauty and traces} (\cf \cite{McKPip14}) in conjunction with \emph{GAP} (\cf \cite{GAP4}) and \emph{GRAPE} (\cf \cite{GRAPE}). This gives us the following three graphs:
    \[ 
    \begin{matrix}
    \begin{tikzpicture}[rotate=-126,scale=0.3]
        \SetGraphUnit{1}
        \GraphInit[vstyle=Welsh]
        \renewcommand*{\VertexSmallMinSize}{2pt}
        \SetVertexMath
        \Vertices[unit=3,NoLabel]{circle}{A,B,C,D,E}
        \Edges(B,C,D,E,A,C,E,B,D,A)
        \Edge(A)(B)
    \end{tikzpicture}&
    \begin{tikzpicture}[rotate=-126,scale=0.3]
        \SetGraphUnit{1}
        \GraphInit[vstyle=Welsh]
        \renewcommand*{\VertexSmallMinSize}{2pt}
        \SetVertexMath
        \Vertices[unit=3,NoLabel]{circle}{C,D,A,E,B}
        \Edge(B)(C)
        \Edges(D,E,A)
        \Edges(B,E,C)
        \Edges(A,D,B)
        \Edge(A)(C)
        \Edge(C)(D)
    \end{tikzpicture}&
    \begin{tikzpicture}[rotate=-126,scale=0.3]
        \SetGraphUnit{1}
        \GraphInit[vstyle=Welsh]
        \renewcommand*{\VertexSmallMinSize}{2pt}
        \SetVertexMath
        \Vertices[unit=3,NoLabel]{circle}{A,C,B,E,D}
        \Edges(B,C,E,B)
        \Edges(B,D,E)
        \Edge(A)(D)
        \Edge(A)(C)
        \Edge(A)(E)
    \end{tikzpicture}
    \end{matrix}
\]
  Next, for each graph $\Theta$ from this list we computed the orbits of $\Aut(\Theta)$ in its action on edges. Each orbit representative corresponds to a two-vertex subgraph $\Delta\cong K_2$, producing a graph type $(\Delta,\iota,\Theta)$ (as usually, $\iota$ is the identical embedding). This produces the upper row of graph types. The lower row is obtained by removing the distinguished edge in each case. Clearly, this produces a transversal of the isomorphism classes of graph types $\bbT$ of order $(2,5)$ for which $\Cl(\bbT)$ is $3$-connected.
\end{proof}

\section{Point graphs of partial quadrangles}
An incidence structure is a triple $(\mathscr{P},\mathscr{L},I)$, where $\mathscr{P}$ is a set of points (denoted by capital Latin letters $P,Q,\ldots$), $\mathscr{L}$ is a set of lines (denoted by small Latin letters $l,s,t,\ldots$), and $I\subseteq\mathscr{P}\times\mathscr{L}$ is an incidence relation. The elements of $I$ are called \emph{flags} and the elements of $(\mathscr{P}\times\mathscr{L})\setminus I$ are called \emph{antiflags}. A point $P$ is called incident with a line $l$ if $(P,l)$ is a flag. Slightly abusing the notation we write in this case $P\in l$. Two distinct flags $(P,p)$ and $(Q,q)$ are called \emph{collinear} if $p=q$, and \emph{concurrent} if $P=Q$. Two distinct points $P$ and $Q$ are called collinear if there exists a line $l$ such that $(P,l)$ and $(Q,l)$ are flags. In this case we say that $l$ goes through $P$ and $Q$. Dually, we say that two lines $p$ and $q$ are intersecting each other if there is a point $P$ such that $P\in p$ and $P\in q$.

For every incidence structure we may define its \emph{point graph}. This is a simple graph which has as vertices the points of the incidence structure such that between two points there is an edge if and only if the points are collinear.

In the following we restrict our attention to so-called partial linear spaces:
\begin{definition}
	A \emph{partial linear space of order $(s,t)$} (short $\pls(s,t)$) is an incidence structure $(\mathscr{P},\mathscr{L},I)$ with the following properties:
	\begin{enumerate}[label=PLS\arabic*.,ref=PLS\arabic*]	
		\item every line is incident with the same number $s+1$ of points, 
		\item every point is incident with the same number $t+1$ of lines, 
		\item\label{gq3} through any two distinct points goes at most one line, 
	\end{enumerate} 
\end{definition}
 If two lines $p$ and $q$ of a partial linear space intersect each other, then we denote the unique point of intersection by $p\cap q$.

We are interested in partial linear spaces because there are interesting classes of them whose point graphs are strongly regular graphs. Two such classes are defined below. The first class of interest is the one of generalized quadrangles as introduced by J.~Tits in \cite{Tit59}: 
\begin{definition}
	A \emph{generalized quadrangle} of order $(s,t)$ (short $\GQ(s,t)$) is a partial linear space of order $(s,t)$ with the the following additional property:
	\begin{enumerate}[label=GQ\arabic*.,ref=GQ\arabic*]	
\item\label{gq4} for every antiflag $(P,q)$ there is a unique point $Q$ such that $P$ and $Q$ are collinear and such that $Q\in q$.  
	\end{enumerate}
\end{definition}

It is well-known that the point graph of a generalized quadrangle of order $(s,t)$ is strongly regular with parameters $(v,k,\lambda,\mu)$ where 
\begin{align*} 
	v&=(s+1)(st+1) & k&=s(t+1) & \lambda&=s-1 &
	\mu&=t+1 
\end{align*} 
Axiom \ref{gq4} ensures that a generalized quadrangle does not contain triples of pairwise collinear points that are not all three on one line. From this follows that every set of points that induces a clique in the point graph is a subset of some line. In particular, the generalized quadrangle can be reconstructed from its point graph up to isomorphism by taking as points the vertices of the point graph, as lines the maximal cliques and as incidence relation the $\in$-relation. Moreover, the point graph of a generalized quadrangle cannot contain $K_4-e$ as an induced subgraph because this would imply the existence of two distinct maximal cliques that intersect in at least two points which cannot happen because of axiom \ref{gq3}.

In \cite{Cam75} P.~J.~Cameron examined point graphs of generalized quadrangles and made the above observations.  These observation lead him to study strongly regular graphs that do not contain $K_4-e$. It turns out that such graphs always arise as point graph of certain partial linear spaces. The class of partial linear spaces that have as a point graph an srg without $K_4-e$ as induced subgraph, are called \emph{partial quadrangles}. Below we give an axiomatization:
\begin{definition}
	A \emph{partial quadrangle}  with parameters $(s,t,\mu)$ (short $\PQ(s,t,\mu)$) is a partial linear space $(\mathscr{P},\mathscr{L},I)$ of order $(s,t)$ with the following properties:
	\begin{enumerate}[label=PQ\arabic*.,ref=PQ\arabic*]	
		\item if three points are pairwise collinear, then they are all three on one line, 
	\item for every pair $(P,Q)$ of non-collinear points there exist $\mu$ points $X$ that are collinear with both points $P$ and $Q$.  
	\end{enumerate}	
\end{definition}
Clearly, every generalized quadrangle is a partial quadrangle. However, there are examples of partial quadrangles that are not generalized quadrangles. For instance, the Petersen graph is the point graph of a partial quadrangle but not of a generalized quadrangle. The papers \cite{BamDeCDur11,Cos17} can  be used as a starting point to get an overview on the known constructions of partial quadrangles.

\begin{theorem}[P.~J.~Cameron {\cite[Theorem 2]{Cam75}}]
Let $\Gamma=(V,E)$ be a strongly regular graph with parameters $(v,k,\lambda,\mu)$. Then $\Gamma$ is isomorphic to the point graph of a partial quadrangle if and only if $\mu>0$ and it does not contain any induced subgraph isomorphic to $K_4-e$.
\end{theorem}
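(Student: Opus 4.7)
The plan is to prove both implications by taking the \emph{maximal cliques} of $\Gamma$ to be the lines of the putative partial quadrangle. Everything will hinge on one structural consequence of $K_4-e$-freeness: for every edge $\{A,B\}$ the common neighbors $N(A)\cap N(B)$ must be mutually adjacent (otherwise two of them together with $A$ and $B$ would induce a $K_4-e$), so $\{A,B\}\cup(N(A)\cap N(B))$ is already a clique of size $\lambda+2$, and is the unique maximal clique containing the edge $\{A,B\}$.

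For the ``only if'' direction I would assume that $\Gamma$ is the point-graph of a $PQ(s,t,\mu)$. Strong regularity and $\mu>0$ follow from a routine count against \ref{pq1}-\ref{pq2} and the definition of a $\pls(s,t)$. To forbid an induced $K_4-e$, suppose $\{A,B,X,Y\}$ induced $K_4-e$ with $X\not\sim Y$. The triples $\{A,B,X\}$ and $\{A,B,Y\}$ would then be pairwise collinear, so by \ref{pq1} each sits on a single line, and both of those lines pass through the two points $A,B$; by \ref{gq3} they must coincide, forcing $X$ and $Y$ to be collinear — a contradiction.

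For the ``if'' direction I would start with an srg $\Gamma$ satisfying $\mu>0$ and containing no induced $K_4-e$, and build the incidence structure whose points are $V(\Gamma)$, whose lines are the maximal cliques of $\Gamma$, and whose incidence is membership. From the structural observation above, every edge lies in a unique maximal clique, every maximal clique has exactly $\lambda+2$ vertices, and any two distinct maximal cliques share at most one vertex. The maximal cliques through a fixed vertex $P$ therefore partition $N(P)$ into blocks of size $\lambda+1$, which yields (PLS1) with $s=\lambda+1$, (PLS2) with $t+1=k/(\lambda+1)$, and (PLS3) from the uniqueness of the clique through an edge. Axiom \ref{pq1} holds because any pairwise collinear triple is a triangle in $\Gamma$, hence contained in the unique maximal clique through any of its edges; axiom \ref{pq2} is precisely the $\mu$-condition of strong regularity, non-vacuous because $\mu>0$. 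By construction the point-graph of this incidence structure is $\Gamma$. The only genuine obstacle is the structural lemma on maximal cliques; once it is in place, the PLS/PQ axioms fall out mechanically, and the non-trivial divisibility $(\lambda+1)\mid k$ appears as a by-product of the proof rather than as a hypothesis.
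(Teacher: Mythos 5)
Your proof is correct and takes essentially the same route as the paper, which states this result without proof (citing Cameron) but recalls precisely your construction in the paragraph that follows: lines are the maximal cliques, each of size $\lambda+2$, giving a partial quadrangle with parameters $(\lambda+1,\tfrac{k}{\lambda+1}-1,\mu)$. Your key lemma --- that $K_4-e$-freeness forces $\{A,B\}\cup(N(A)\cap N(B))$ to be the unique maximal clique on the edge $\{A,B\}$ --- is exactly the right engine, and the converse direction via (PQ1) and (PLS3) is sound.
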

Let us recall that starting from a strongly regular graph $\Gamma$ with parameters $(v,k,\lambda,\mu)$ that has no induced subgraph isomorphic to $K_4-e$, we can construct a partial quadrangle by taking as points the vertices of $\Gamma$ and as lines the maximal cliques. The resulting partial quadrangle has parameters $(\lambda+1,\frac{k}{\lambda+1}-1,\mu)$. On the other hand, the parameters of the point graph of a $\PQ(s,t,\tilde\mu)$ are
\begin{align*}
	v&=\frac{s(t+1)(\mu+st)}{\mu}+1, & k&=s(t+1), & \lambda &= s-1, & \mu&=\tilde\mu.
\end{align*}

Now we are ready to formulate the first result of this section: 
\begin{theorem}\label{pqfivevert}
   Let $\Gamma$ be  the point graph of a partial quadrangle. Then $\Gamma$ satisfies the $5$-vertex condition.  
\end{theorem} 
\begin{proof}
	At first we note that by Theorem~\ref{HesHig}, in order to test the $4$-vertex condition for $\Gamma$ it is enough to test it for 
	\[
	\bbT_1\colon \begin{tikzpicture}[baseline={(current bounding box.center)}]
\SetGraphUnit{1}
    \GraphInit[vstyle=Welsh]
    \renewcommand*{\VertexSmallMinSize}{2pt}
    \SetVertexMath
    \Vertex[Lpos=-135,NoLabel]{x}
    \EA[Lpos=0,L={,}](x){y}
    \NO[NoLabel](x){u}
    \NO[NoLabel](y){v}
    \Edges(v,x,u,v,y,u)
    \Edges(x,y)
    \AddVertexColor{black}{x,y}
\end{tikzpicture}
\]
	as $\Gamma$ does not contain $K_4-e$ as an induced subgraph. Clearly, we have
	\[
	\#(\Gamma,\bbT_1) = (s-1)(s-2).
	\]
	Secondly we note that from all the graph types given in Proposition~\ref{five-cond} only the underlying graph of the first one  does not contain $K_4-e$ as an induced subgraph. Thus, in order to test the $5$-vertex condition, we have only to consider 
	\[
	    \bbT_2\colon \begin{tikzpicture}[rotate=-126,scale=0.3,baseline={(current bounding box.center)}]
        \SetGraphUnit{1}
        \GraphInit[vstyle=Welsh]
        \renewcommand*{\VertexSmallMinSize}{2pt}
        \SetVertexMath
        \Vertices[unit=3,NoLabel]{circle}{A,B,C,D,E}
        \Edges(B,C,D,E,A,C,E,B,D,A)
        \Edge(A)(B)
        \AddVertexColor{black}{A,B}
    \end{tikzpicture}\raisebox{-5.5ex}{\hspace*{-0.7em}.}
	\]
	However, we easily compute
	\[
	 \#(\Gamma,\bbT_2) = (s-1)(s-2)(s-3).\qedhere
	\]
\end{proof}
Let us also have a look at a criterion  for the $6$-vertex condition for partial quadrangles: 
\begin{proposition}\label{pq6vc}
	Let $\Gamma$ be the point graph of a partial quadrangle. Then in order to test the $6$-vertex condition for $\Gamma$ it suffices to check it for the following $8$ graph types: 
\[
\begin{matrix} 
    \begin{tikzpicture}[rotate=0,scale=0.3]
        \SetGraphUnit{1}
        \GraphInit[vstyle=Welsh]
        \renewcommand*{\VertexSmallMinSize}{2pt}
        \SetVertexMath
        \Vertices[unit=3,NoLabel]{circle}{A,B,C,D,E,F}
        \Edges(A,F,B,E,C,D)
        \Edge(C)(F)
        \Edge(A)(E)
        \Edge(B)(D)
        \Edge(D)(A)
        \AddVertexColor{black}{E,F}
    \end{tikzpicture}&
    \begin{tikzpicture}[rotate=-60,scale=0.3]
        \SetGraphUnit{1}
        \GraphInit[vstyle=Welsh]
        \renewcommand*{\VertexSmallMinSize}{2pt}
        \SetVertexMath
        \Vertices[unit=3,NoLabel]{circle}{A,B,C,F,E,D}
        \Edges(A,F,B,E,C,D)
        \Edge(C)(F)
        \Edge(A)(E)
        \Edge(B)(D)
        \Edge(D)(A)
        \AddVertexColor{black}{D,A}
    \end{tikzpicture}&
    \begin{tikzpicture}[rotate=-60,scale=0.3]
        \SetGraphUnit{1}
        \GraphInit[vstyle=Welsh]
        \renewcommand*{\VertexSmallMinSize}{2pt}
        \SetVertexMath
        \Vertices[unit=3,NoLabel]{circle}{A,B,C,F,E,D}
        \Edges(A,F,B,E,C,D)
        \Edge(C)(F)
        \Edge(A)(E)
        \Edge(B)(D)
        \AddVertexColor{black}{D,A}
    \end{tikzpicture}&    
    \begin{tikzpicture}[rotate=-60,scale=0.3]
        \SetGraphUnit{1}
        \GraphInit[vstyle=Welsh]
        \renewcommand*{\VertexSmallMinSize}{2pt}
        \SetVertexMath
        \Vertices[unit=3,NoLabel]{circle}{A,E,D,C,B,F}
        \Edges(F,B,E)
        \Edge(C)(D)
        \Edge(C)(F)
        \Edge(A)(E)
        \Edge(D)(E)
        \Edge(B)(C)
        \Edge(D)(A)
        \Edge(A)(F)
        \AddVertexColor{black}{F,A}
    \end{tikzpicture}\\
    \begin{tikzpicture}[rotate=-60,scale=0.3]
        \SetGraphUnit{1}
        \GraphInit[vstyle=Welsh]
        \renewcommand*{\VertexSmallMinSize}{2pt}
        \SetVertexMath
        \Vertices[unit=3,NoLabel]{circle}{A,E,D,C,B,F}
        \Edges(F,B,E)
        \Edge(C)(D)
        \Edge(C)(F)
        \Edge(A)(E)
        \Edge(D)(E)
        \Edge(B)(C)
        \Edge(D)(A)
        \AddVertexColor{black}{F,A}
    \end{tikzpicture}&
    \begin{tikzpicture}[rotate=-120,scale=0.3]
        \SetGraphUnit{1}
        \GraphInit[vstyle=Welsh]
        \renewcommand*{\VertexSmallMinSize}{2pt}
        \SetVertexMath
        \Vertices[unit=3,NoLabel]{circle}{A,D,F,E,C,B}
        \Edges(F,B,E)
        \Edge(C)(D)
        \Edge(C)(F)
        \Edge(A)(E)
        \Edge(D)(E)
        \Edge(B)(C)
        \Edge(D)(A)
        \Edge(A)(F)
        \AddVertexColor{black}{A,D}
    \end{tikzpicture}&
    \begin{tikzpicture}[rotate=-120,scale=0.3]
        \SetGraphUnit{1}
        \GraphInit[vstyle=Welsh]
        \renewcommand*{\VertexSmallMinSize}{2pt}
        \SetVertexMath
        \Vertices[unit=3,NoLabel]{circle}{A,D,F,E,C,B}
        \Edges(F,B,E)
        \Edge(C)(D)
        \Edge(C)(F)
        \Edge(A)(E)
        \Edge(D)(E)
        \Edge(B)(C)
        \Edge(A)(F)
        \AddVertexColor{black}{A,D}
    \end{tikzpicture}&
    \begin{tikzpicture}[rotate=0,scale=0.3]
        \SetGraphUnit{1}
        \GraphInit[vstyle=Welsh]
        \renewcommand*{\VertexSmallMinSize}{2pt}
        \SetVertexMath
        \Vertices[unit=3,NoLabel]{circle}{B,C,D,A,E,F}
        \Edges(F,B,E)
        \Edge(C)(D)
        \Edge(C)(F)
        \Edge(A)(E)
        \Edge(D)(E)
        \Edge(B)(C)
        \Edge(D)(A)
        \Edge(A)(F)
        \AddVertexColor{black}{E,F}
    \end{tikzpicture}
    \end{matrix}
\]
\end{proposition}
\begin{proof}
	The above given $8$ graph types form a transversal of the isomorphism classes of  all those graph types $\bbT=(\Delta,\iota,\Theta)$ of order $(2,6)$ for which $\Cl(\bbT)$ is $3$-connected and for which $\Theta$ does not contain an induced subgraph isomorphic to $K_4-e$. Now the claim follows from Theorem~\ref{pqfivevert} together with Proposition~\ref{cond-threecon}.  
\end{proof}

In the special case of generalized quadrangles S.~Reichard showed in \cite{Reich03}  among these $8$ graph types there are $5$ types $\mathbb{T}$ such that the point graph of every generalized quadrangle is $\mathbb{T}$-regular. Together with this observation we obtain:
\begin{proposition}
	The point graph of a generalized quadrangle satisfies the $6$-vertex condition if and only if it is regular for the following graph types:
\[
    \begin{tikzpicture}[rotate=0,scale=0.3]
        \SetGraphUnit{1}
        \GraphInit[vstyle=Welsh]
        \renewcommand*{\VertexSmallMinSize}{2pt}
        \SetVertexMath
        \Vertices[unit=3,NoLabel]{circle}{A,B,C,D,E,F}
        \Edges(A,F,B,E,C,D)
        \Edge(C)(F)
        \Edge(A)(E)
        \Edge(B)(D)
        \Edge(D)(A)
        \AddVertexColor{black}{E,F}
    \end{tikzpicture}\quad
    \begin{tikzpicture}[rotate=-60,scale=0.3]
        \SetGraphUnit{1}
        \GraphInit[vstyle=Welsh]
        \renewcommand*{\VertexSmallMinSize}{2pt}
        \SetVertexMath
        \Vertices[unit=3,NoLabel]{circle}{A,B,C,F,E,D}
        \Edges(A,F,B,E,C,D)
        \Edge(C)(F)
        \Edge(A)(E)
        \Edge(B)(D)
        \Edge(D)(A)
        \AddVertexColor{black}{D,A}
    \end{tikzpicture}\quad
    \begin{tikzpicture}[rotate=-60,scale=0.3]
        \SetGraphUnit{1}
        \GraphInit[vstyle=Welsh]
        \renewcommand*{\VertexSmallMinSize}{2pt}
        \SetVertexMath
        \Vertices[unit=3,NoLabel]{circle}{A,B,C,F,E,D}
        \Edges(A,F,B,E,C,D)
        \Edge(C)(F)
        \Edge(A)(E)
        \Edge(B)(D)
        \AddVertexColor{black}{D,A}
    \end{tikzpicture}
\]
\end{proposition}
\begin{proof}
	By Proposition~\ref{pq6vc}, in order to proof the claim, we need to show that $\Gamma$ is regular for the following graph-types (to get a better understanding, we depict the types not as graphs but as geometrical configurations):
	\[
	\begin{matrix}
	    \begin{tikzpicture}[scale=0.8]
	        \tkzDefPoint(0,0){a}
	        \tkzDefPoint(0,1){b}
	        \tkzDefPoint(0,2){c}
	        \tkzDefPoint(1.5,0){t}
	        \tkzDefPoint(1.5,1){u}
	        \tkzDefPoint(1.5,2){v}
\tkzDrawPoints[shape=circle,fill=black,size=13](a,t)
\tkzDrawPoints[shape=circle,fill=white,size=13](b,c,u,v)
	        \tkzDrawLine(a,c)
	        \tkzDrawLine(t,v)
	        \tkzDrawLine(c,v)
	        \tkzDrawLine(b,u)
	        \tkzDrawLine(a,t)
	    \end{tikzpicture} & 
	    \begin{tikzpicture}[scale=0.8]
	        \tkzDefPoint(0,0){a}
	        \tkzDefPoint(0,1){b}
	        \tkzDefPoint(0,2){c}
	        \tkzDefPoint(1.5,0){t}
	        \tkzDefPoint(1.5,1){u}
	        \tkzDefPoint(1.5,2){v}
\tkzDrawPoints[shape=circle,fill=black,size=13](a,t)
\tkzDrawPoints[shape=circle,fill=white,size=13](b,c,u,v)
	        \tkzDrawLine(a,c)
	        \tkzDrawLine(t,v)
	        \tkzDrawLine(c,v)
	        \tkzDrawLine(b,u)
	    \end{tikzpicture} &
	    \begin{tikzpicture}[scale=0.8]
	        \tkzDefPoint(0,0){a}
	        \tkzDefPoint(0,1){b}
	        \tkzDefPoint(0,2){c}
	        \tkzDefPoint(1.5,0){t}
	        \tkzDefPoint(1.5,1){u}
	        \tkzDefPoint(1.5,2){v}
\tkzDrawPoints[shape=circle,fill=black,size=13](a,b)
\tkzDrawPoints[shape=circle,fill=white,size=13](t,c,u,v)
	        \tkzDrawLine(a,c)
	        \tkzDrawLine(t,v)
	        \tkzDrawLine(c,v)
	        \tkzDrawLine(b,u)
	        \tkzDrawLine(a,t)
	    \end{tikzpicture}  &
	    \begin{tikzpicture}[scale=0.8]
	        \tkzDefPoint(0,0){a}
	        \tkzDefPoint(0,1){b}
	        \tkzDefPoint(0,2){c}
	        \tkzDefPoint(1.5,0){t}
	        \tkzDefPoint(1.5,1){u}
	        \tkzDefPoint(1.5,2){v}
\tkzDrawPoints[shape=circle,fill=black,size=13](a,u)
\tkzDrawPoints[shape=circle,fill=white,size=13](b,c,t,v)
	        \tkzDrawLine(a,c)
	        \tkzDrawLine(t,v)
	        \tkzDrawLine(c,v)
	        \tkzDrawLine(b,u)
	        \tkzDrawLine(a,t)
	    \end{tikzpicture}&
	    \begin{tikzpicture}[scale=0.8]
	        \tkzDefPoint(-0.5,0){a}
	        \tkzDefPoint(0.5,1){b}
	        \tkzDefPoint(0,2){c}
	        \tkzDefPoint(1.5,0){t}
	        \tkzDefPoint(1.5,1){u}
	        \tkzDefPoint(1.5,2){v}
\tkzDrawPoints[shape=circle,fill=black,size=13](a,b)
\tkzDrawPoints[shape=circle,fill=white,size=13](t,c,u,v)
	        \tkzDrawLine(a,c)
	        \tkzDrawLine(b,c)
	        \tkzDrawLine(t,v)
	        \tkzDrawLine(c,v)
	        \tkzDrawLine(b,u)
	        \tkzDrawLine(a,t)
	    \end{tikzpicture}\\
	    \bbT_1 & \bbT_2 & \bbT_3 & \bbT_4 & \bbT_5. 
	    \end{matrix}
\]
Suppose that the generalized quadrangle $\Pi=(\mathscr{P},\mathscr{L})$ has order $(s,t)$ and the parameters of its point graph $\Gamma$ as a strongly regular graph are $(v,k,\lambda,\mu)$. 
Then we can count:
\begin{align*}
	\#(\Gamma,\bbT_1) &= t^2s(s-1),\\
	\#(\Gamma,\bbT_2) &= (t+1)t(s-1)(s-2),\\
	\#(\Gamma,\bbT_3) &= (k-s)t(s-1),  \\
	\#(\Gamma,\bbT_4) &=(t+1)t(s-1).
\end{align*}
Counting $\#(\Gamma,\bbT_5)$ is slightly more involved. Observe that  
\[
	\#(\Gamma,\bbT_5) =\mu(|\mathscr{L}|-3t-1) - \#(\Gamma,\widetilde{\bbT}_5)=(t+1)t(st+s-2)-\#(\Gamma,\widetilde{\bbT}_5),
\]
where $\widetilde\bbT_5$ is given by: 
\[
	    \begin{tikzpicture}[scale=0.8]
	        \tkzDefPoint(-0.5,0){a}
	        \tkzDefPoint(0.5,1){b}
	        \tkzDefPoint(0,2){c}
	        \tkzDefPoint(1.5,0.5){t}
	        \tkzDefPoint(1.5,2){v}
\tkzDrawPoints[shape=circle,fill=black,size=13](a,b)
\tkzDrawPoints[shape=circle,fill=white,size=13](t,c,v)
	        \tkzDrawLine(a,c)
	        \tkzDrawLine(b,c)
	        \tkzDrawLine(t,v)
	        \tkzDrawLine(c,v)
	        \tkzDrawLine(b,t)
	        \tkzDrawLine(a,t)
	    \end{tikzpicture}
\]
From Theorem~\ref{pqfivevert} it follows that $\Gamma$ is $\widetilde\bbT_5$-regular. Thus it is also $\bbT_5$-regular.
\end{proof}

%\section{\texorpdfstring{$(3,7)$}{(3,7)}-regular graphs}
	Recall, that in a partial linear space, three pairwise non-collinear points are called a \emph{triad}. Moreover, a \emph{center} of a triad is a point collinear to all three points of the triad.
\begin{theorem}[P.~J.~Cameron {\cite[Theorem 2]{Cam75}}]
	Given a partial quadrangle of order $(s,t,\mu)$. Then
    \begin{equation*} 
    	\Big( s(t-1)+(\mu-1)(\mu-2) \Big)\Big( \frac{(t+1)ts^2}{\mu}-1-(t+1)s+\mu\Big)\ge \mu(t-1)^2s^2.
    \end{equation*}
    Moreover, equality holds if and only if every triad in the partial quadrangle has the same number $c$ of centers. In this case we have
    \[
    	c=1+\frac{(\mu-1)(\mu-2)}{s(t-1)}
    \]
\end{theorem}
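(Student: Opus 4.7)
\emph{Proof proposal.} The plan is to count triads together with their centers in two complementary ways and then apply a Cauchy--Schwarz (variance) inequality. For each $i\ge 0$, let $n_i$ denote the number of triads with exactly $i$ centers, and set $N=\sum_i n_i$, $M=\sum_i i\,n_i$, $A=\sum_i i(i-1)\,n_i$. Cauchy--Schwarz applied to $\sqrt{n_i}$ and $i\sqrt{n_i}$ yields $M^2\le N(A+M)$, with equality if and only if $i$ is constant on the support of $n_i$, i.e.\ every triad has the same number of centers. Combining this inequality with explicit values of $N$, $M$, $A$ and the strongly-regular identity $k(k-\lambda-1)=(v-k-1)\mu$ (which, using $k=s(t+1)$ and $\lambda=s-1$, yields $v-1-k=s^2t(t+1)/\mu$) will deliver the claimed inequality; the equality case will identify the constant $c$.

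The two key geometric inputs I will need are: first, by PQ1 together with the observation that each of the $t+1$ lines through a point $X$ contains exactly $s$ further neighbours of $X$, the first subconstituent $\Gamma_1(X)$ is isomorphic to a disjoint union of $t+1$ copies of $K_s$; second, for any two non-collinear points $X_1,X_2$ their $\mu$ common neighbours form an independent set, since two collinear common neighbours $A,B$ would force, by PQ1, the triples $\{A,B,X_1\}$ and $\{A,B,X_2\}$ to lie on lines through $A$ and $B$, and PLS3 would identify those two lines, making $X_1,X_2$ collinear. With these, for each point $X$ the triads centred at $X$ are precisely the independent triples in $\Gamma_1(X)$, of which there are $\binom{t+1}{3}s^3$, so $M=v\binom{t+1}{3}s^3$. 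For $A$, I sum over ordered pairs of distinct centers: collinear pairs contribute $0$ (their $s-1$ common neighbours lie on their common line and are pairwise collinear), while non-collinear pairs each contribute $\binom{\mu}{3}$, giving $A=v(v-1-k)\binom{\mu}{3}$. Finally $N=\tfrac{1}{6}v(v-1-k)(v-2k+\mu-2)$ is routine from choosing an ordered triad of pairwise non-collinear points.

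Substituting these counts into $M^2\le N(A+M)$, cancelling the common factor $v/6$, and replacing $v-1-k$ by $s^2t(t+1)/\mu$ wherever it occurs, will reduce the estimate to a closed-form inequality in $s$, $t$, $\mu$ of exactly the stated shape. For the equality clause, equality in Cauchy--Schwarz forces $c:=M/N$ to be the common number of centers of every triad; computing $c-1=A/M$ and applying the strongly-regular identity once more yields $c-1=(\mu-1)(\mu-2)/(s(t-1))$, which is the asserted value of $c$. I expect the genuine obstacle to be the geometric lemma that the $\mu$ common neighbours of a non-collinear pair are pairwise non-collinear, as this is the pivotal place where axioms PQ1 and PLS3 must be combined; the Cauchy--Schwarz step and the subsequent algebra are mechanical once the counts $N,M,A$ are established.
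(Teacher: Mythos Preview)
The paper does not give its own proof of this theorem: it is stated with attribution to Cameron and used as a black box. So there is nothing in the paper to compare against directly.

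That said, your proposal is correct and is precisely the classical variance argument that underlies Cameron's result. Your three counts are right: the identification $\Gamma_1(X)\cong (t+1)K_s$ via PQ1 gives $M=v\binom{t+1}{3}s^3$; your lemma that the $\mu$ common neighbours of a non-collinear pair are pairwise non-collinear (using PQ1 and PLS3 exactly as you sketch) gives $A=v(v-1-k)\binom{\mu}{3}$; and $N$ is the routine triad count. Substituting $v-1-k=s^2t(t+1)/\mu$ into $M^2\le N(A+M)$ reduces, after cancelling $v^2 s^4 t^2(t+1)^2/36$, to
\[
\bigl(s(t-1)+(\mu-1)(\mu-2)\bigr)\,(v-2-2k+\mu)\ \ge\ \mu(t-1)^2 s^2,
\]
which is the content of the displayed inequality. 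Your computation of the equality case via $c-1=A/M=(\mu-1)(\mu-2)/\bigl(s(t-1)\bigr)$ is clean and matches the stated value of~$c$.

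One small point worth making explicit when you write it up: the Cauchy--Schwarz step tacitly assumes $N>0$, i.e.\ that triads exist, and the formula for $c$ assumes $t>1$ and $s\ge 1$; you should flag the degenerate cases $t=1$ (every line through a point, no triads possible in a nontrivial way) separately or note that the statement is understood for nondegenerate parameters.
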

For the special case of generalized quadrangles this simplifies to the following well-known result by Higman:
\begin{theorem}[D.~G.~Higman 1971 {\cf \cite[Corollary to Theorem 1]{Cam75}}]\label{GQqqsq} 
    Given a generalized quadrangle of order $(s,t)$. Then $s^2\ge t$. Moreover, equality holds if and only if every triad in the generalized quadrangle has the same number $(s+1)$ of centers.
\end{theorem}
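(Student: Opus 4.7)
My plan is to deduce Higman's bound as a direct specialization of Cameron's partial-quadrangle inequality~\eqref{pqcond}. The first step is to identify a generalized quadrangle of order $(s,t)$ with a partial quadrangle of order $(s,t,t+1)$. Given two non-collinear points $P,Q$, the axiom (GQ4) applied to the anti-flag $(P,l)$ for each of the $t+1$ lines $l$ through $Q$ yields a unique point of $l$ collinear with $P$, and every common neighbour of $P$ and $Q$ arises in this way for a unique such $l$. Hence every pair of non-collinear points has exactly $\mu=t+1$ common neighbours, as well as the required non-collinearity/collinearity regularity (axiom PQ1 follows from the known absence of $K_4-e$ in the point graph of a GQ, itself a direct consequence of PLS3).

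Next I would substitute $\mu=t+1$ into inequality~\eqref{pqcond}. The first factor on the left becomes $s(t-1)+(\mu-1)(\mu-2)=s(t-1)+t(t-1)=(t-1)(s+t)$, and the right-hand side becomes $(t+1)(t-1)^{2}s^{2}$. After cancelling the positive common factors $(t-1)$, $(t+1)$, $t$ and $s^{2}$, and clearing the denominator, the inequality collapses algebraically to $s^{2}\ge t$, which is the statement to be proved.

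For the equality case I would use the second assertion of the partial-quadrangle theorem: equality in~\eqref{pqcond} is equivalent to every triad having the same number of centres $c=1+\frac{(\mu-1)(\mu-2)}{s(t-1)}$. Substituting $\mu=t+1$ gives $c=1+\frac{t(t-1)}{s(t-1)}=1+\frac{t}{s}$. Since by the previous step equality in~\eqref{pqcond} is equivalent to $s^{2}=t$, under that equality one has $c=1+\frac{s^{2}}{s}=s+1$, and conversely $c=s+1$ forces $\frac{t}{s}=s$, i.e.\ $s^{2}=t$. This yields the ``if and only if'' in the desired form.

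The routine but somewhat delicate point will be the algebraic manipulation in the second step: one has to verify that the denominator appearing in~\eqref{pqcond} is positive after the substitution $\mu=t+1$, so that the direction of the inequality is preserved upon clearing it. This is where I expect the main bookkeeping care to be needed; the rest is either a direct application of (GQ4) or a substitution into the equality condition already supplied by Cameron's theorem.
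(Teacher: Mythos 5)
Your overall strategy is exactly the one the paper intends: the paper gives no proof of Theorem~\ref{GQqqsq} beyond the remark that it is the specialization of the preceding partial-quadrangle theorem, and your reduction of a $\GQ(s,t)$ to a $PQ(s,t,t+1)$ via (GQ4), together with your treatment of the equality case through the formula $c=1+\frac{(\mu-1)(\mu-2)}{s(t-1)}$, is correct and is precisely that specialization.

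The one step that would fail as you describe it is the middle, algebraic one. Inequality~\eqref{pqcond} as printed is garbled: the terms $-1-(t+1)s+\mu$ have slipped into the denominator, whereas they should be added to the fraction. The second factor ought to be the number of points forming a triad with a fixed non-collinear pair, namely
\[
N=\frac{(t+1)ts^2}{\mu}-1-(t+1)s+\mu ,
\]
and the inequality is $\bigl(s(t-1)+(\mu-1)(\mu-2)\bigr)\cdot N\ge \mu(t-1)^2s^2$, i.e.\ the Cauchy--Schwarz inequality $N\sum_R c_R^2\ge\bigl(\sum_R c_R\bigr)^2$ for the center-counts $c_R$ of the triads through that pair. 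If you take the printed formula literally, your planned verification that the denominator is positive fails --- for $\mu=t+1$ it equals $2t+1-(t+1)s<0$ whenever $s\ge 2$ --- and the algebra does not collapse to $s^2\ge t$; the cancellation is also not the simple removal of the factors $(t-1)$, $(t+1)$, $t$, $s^2$ that you announce. With the corrected reading, $\mu=t+1$ gives $N=s^2t-s(t+1)+t$, the inequality becomes $(t-1)(s+t)(s^2t-st-s+t)\ge(t+1)(t-1)^2s^2$, and the difference of the two sides factors as $t(t-1)(s-1)(s^2-t)$; hence for $s>1$ and $t>1$ the inequality is equivalent to $s^2\ge t$, with equality exactly under Cameron's equality condition, in which case $c=1+t/s=s+1$ as you computed. (For $s=1$ the difference vanishes identically and the bound is false for dual grids, so the hypothesis $s>1$ is tacitly needed, as in Higman's original statement.)
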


\begin{corollary}[{\cite[Corollary 3]{Rei15}}]\label{gqthreeir}
    Let $\Gamma$ be the point graph of a generalized quadrangle of order $(q,q^2)$. Then $\Gamma$ is $3$-isoregular.  
\end{corollary}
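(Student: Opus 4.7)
The plan is to invoke Proposition~\ref{kisoregcond}: since $\Gamma$ is strongly regular (hence $2$-isoregular), it suffices to show that for every triple $\{x,y,z\}$ of distinct vertices, the number of vertices adjacent to all three depends only on the isomorphism type of the induced subgraph $\Gamma(\{x,y,z\})$. There are four such types, and I will handle each one using the generalized quadrangle axiom \ref{gq4} together with the parameters $s=q$, $t=q^2$, $\lambda=q-1$, $\mu=q^2+1$.

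For the triangle type $K_3$: I will show that any three pairwise collinear points lie on a common line. Indeed, if $x\sim y\sim z$ and $x\sim z$ but the lines $xy$ and $yz$ were distinct, then the anti-flag $(z,xy)$ would admit two points of $xy$ collinear with $z$, namely $x$ and $y$, contradicting \ref{gq4}. So $x,y,z$ lie on a single line $\ell$, and any common neighbor $w$ must also lie on $\ell$ (by \ref{gq4} applied to $(w,\ell)$, combined with the fact that $w$ is collinear with at least two points of $\ell$). The count is therefore $s-2 = q-2$.

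For the path type (edges $xy$, $yz$, non-edge $xz$) and for the edge-plus-isolated type (edge $xy$, non-edges $xz$, $yz$), I will use \ref{gq4} directly. In the path case, any common neighbor $w\sim y$ lies on some line $m$ through $y$; since $w$ is also collinear with $x$, and since the anti-flag $(x,m)$ (for $m\neq xy$) has $y$ as its unique $x$-collinear point, we force $w\in xy$, and symmetrically $w\in yz$, so $w=y$, a contradiction. Thus the count is $0$. In the edge-plus-isolated case, the common neighbors of $x$ and $y$ are the $q-1$ points of $xy\setminus\{x,y\}$; by \ref{gq4} applied to the anti-flag $(z,xy)$, exactly one of these is collinear with $z$ (and it is not $x$ or $y$ since these are non-collinear to $z$), giving the count $1$.

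The main case, and the one that really uses the hypothesis that the GQ has order $(q,q^2)$, is the triad (three pairwise non-collinear points). Here I invoke Theorem~\ref{GQqqsq}: the bound $s^2\ge t$ becomes an equality, $q^2=q^2$, so every triad has exactly the same number $s+1 = q+1$ of centers. Since centers of a triad are precisely the common neighbors in $\Gamma$ of its three points, the count is $q+1$, independent of the particular triad. Combining all four cases, the number of common neighbors of a $3$-set depends only on its induced isomorphism type, and $3$-isoregularity follows from Proposition~\ref{kisoregcond}. The only nontrivial input is Higman's equality case in Theorem~\ref{GQqqsq}; everything else is a direct unwinding of axiom \ref{gq4}.
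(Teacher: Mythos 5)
Your proof is correct. The paper offers no proof of this corollary at all --- it is imported verbatim as \cite[Cor.5.1]{Rei14} --- but its placement immediately after Theorem~\ref{GQqqsq} makes clear that the intended derivation is exactly yours: Higman's equality case gives the constant count $s+1=q+1$ for triads, and the remaining three isomorphism types of $3$-sets (yielding $q-2$, $0$, and $1$ common neighbours) follow by routine unwinding of axiom~\ref{gq4}, so you have simply written out in full the argument the paper leaves to the citation.
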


\begin{proposition}\label{threeisoregpq}
    Let $\Pi$ be a partial quadrangle of order $(s,t,\mu)$, such that every triad in $\Pi$ has the same number $c$ of centers, and let $\Gamma$ be its point graph. Then $\Gamma$ is $3$-isoregular if and only if either $\Pi$ is a generalized quadrangle and $t=s^2$, or $\Gamma$ is triangle-free (\ie $s=1$). 
\end{proposition}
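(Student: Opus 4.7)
The plan is to enumerate the four isomorphism types of induced $3$-vertex subgraphs of $\Gamma$ --- the triad $\overline{K_3}$, the edge-plus-isolated vertex $K_2+K_1$, the path $P_3$, and the triangle $K_3$ --- and for each either compute the common-neighbor count in any partial quadrangle or characterize precisely when it is constant. Three of the four types dispose of themselves at once. By PQ1 a triangle in $\Gamma$ is a set of three collinear points on a common line $\ell$, whose common neighbors are exactly the remaining $s-2$ points of $\ell$; this count is always $s-2$. For an induced path $A$--$X$--$B$, a common neighbor $Y$ would be a common neighbor of the edge $\{A,X\}$, hence a point of $\ell_{AX}\setminus\{A,X\}$, and also collinear with $B\notin \ell_{AX}$; but PQ1 forces $B$ to be collinear with at most one point of $\ell_{AX}$, namely $X$, so no admissible $Y$ exists and the count is always $0$. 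The triad count is $c$ by hypothesis. Hence only the edge-plus-isolated case can obstruct $3$-isoregularity.

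For this remaining case, fix a collinear pair $\{A,X\}$ on $\ell:=\ell_{AX}$ and a point $Y\neq A,X$ with $Y\not\sim A$, $Y\not\sim X$; write $f(Y)$ for the number of common neighbors of $\{A,X,Y\}$. As above any such common neighbor lies in $\ell\setminus\{A,X\}$ and is collinear with $Y\notin \ell$, so PQ1 gives $f(Y)\in\{0,1\}$. I would then compute $\sum_Y f(Y)$ by double counting pairs $(Z,Y)$ with $Z\in \ell\setminus\{A,X\}$ and $Y\sim Z$: the neighbors of $Z$ off $\ell$ number $k-s=st$, and each such $Y$ is automatically non-collinear with $A$ and $X$ (else PQ1 would place $Y$ on $\ell$), while distinct $Z$'s contribute disjoint $Y$'s (again by PQ1). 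Hence $\sum_Y f(Y)=(s-1)st$. On the other hand the set of admissible $Y$'s has size $N=v-1-k-st$, which using the srg identity $v=1+k+k(k-1-\lambda)/\mu$ with $k=s(t+1)$ and $\lambda=s-1$ simplifies to $N=st\cdot(s(t+1)-\mu)/\mu$.

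For $\Gamma$ to be $3$-isoregular, $f$ must be a global constant in $\{0,1\}$. Either $f\equiv 0$, forcing $(s-1)st=0$, hence $s=1$; or $f\equiv 1$, forcing $(s-1)st=N$, which after cancelling $st$ and rearranging gives $\mu=t+1$ --- precisely the parameter identity characterising generalized quadrangles among partial quadrangles (axiom GQ4). In the GQ case, the hypothesis that every triad has the same number of centers together with Higman's theorem (Theorem~\ref{GQqqsq}) forces $t=s^2$. This gives the forward direction. Conversely, if $s=1$ then $\ell\setminus\{A,X\}=\emptyset$ and $f\equiv 0$, so all four $3$-vertex counts are constant; if $\Pi$ is a GQ with $t=s^2$, then GQ4 gives $f\equiv 1$ and Theorem~\ref{GQqqsq} gives the constant triad count $c=s+1$, so $\Gamma$ is $3$-isoregular --- an alternative reference is Corollary~\ref{gqthreeir}.

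The only nontrivial step is the parameter calculation identifying $\mu=t+1$ with $f\equiv 1$; the rest is bookkeeping among four $3$-vertex types. The main pitfall to guard against is the double-counting argument for $\sum_Y f(Y)$: one must verify, using PQ1 repeatedly, both that distinct choices of $Z$ produce disjoint $Y$'s and that every off-line neighbor of $Z$ is automatically non-collinear with $A$ and $X$, so that the total $(s-1)st$ is clean and the step from ``sum$/$size'' to ``pointwise constant'' is justified (which it is, since $f$ takes values in $\{0,1\}$).
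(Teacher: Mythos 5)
Your proof is correct, and it isolates the same critical configuration as the paper: the $(3,4)$-type whose basis is an edge $\{A,X\}$ together with a vertex $Y$ non-adjacent to both endpoints, with the shared key observation that PQ1 forces its count $f$ into $\{0,1\}$. Where you genuinely diverge is in converting ``$f$ is constant'' into the stated dichotomy. The paper argues locally and directly: $f\equiv 1$ says that every point off a line and non-collinear with two of its points is collinear with exactly one point of that line, which is precisely axiom GQ4 (so $\Pi$ is a generalized quadrangle, and Higman's Theorem~\ref{GQqqsq} then gives $t=s^2$), while $f\equiv 0$ is asserted to force triangle-freeness. You instead run a global double count, comparing $\sum_Y f(Y)=(s-1)st$ with the number $N=st\bigl(s(t+1)-\mu\bigr)/\mu$ of admissible $Y$, and extract $\mu=t+1$; this buys a clean algebraic derivation of the $s=1$ branch from $(s-1)st=0$, at the price of an extra (correct, and worth spelling out) lemma identifying $\mu=t+1$ with GQ4 and of the cancellation hypothesis $st\neq 0$. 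That hypothesis is harmless: $t=0$ makes $\Gamma$ a disjoint union of cliques, which is excluded because the point graph of a partial quadrangle is a genuine srg with $\mu>0$, and the paper's own ``$f\equiv 0$ implies triangle-free'' step needs the same non-degeneracy. Your explicit disposal of the other three $3$-vertex types (triangle count $s-2$, induced path count $0$, triad count $c$) and your converse --- Corollary~\ref{gqthreeir} in the GQ case, the edge-versus-triad case split in the triangle-free case --- match the paper's.
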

\begin{proof}
 ``$\Rightarrow$:''    Suppose that $\Gamma$ is $3$-isoregular. Consider the following graph type $\bbT=(\Delta,\iota,\Theta)$ of order $(3,4)$:
    \[
    \begin{tikzpicture}[rotate=-45]
        \SetGraphUnit{1}
        \GraphInit[vstyle=Welsh]
        \renewcommand*{\VertexSmallMinSize}{2pt}
        \SetVertexMath
        \Vertex[Lpos=-135]{x}
        \EA[Lpos=-90](x){y}
        \NOEA[Lpos=-45](x){z}
        \NO[NoLabel](x){u}
        \Edges(u,x,y,u)
        \Edge(u)(z)
        \AddVertexColor{black}{x,y,z}
    \end{tikzpicture}
    \]
    Then any embedding $\kappa$ of $\Delta$ into $\Gamma$ determines a line $l$ of $\Pi$ (spanned by $\kappa(x)$ and $\kappa(y)$) and a vertex $p=\kappa(z)$ not on this line such that neither $\kappa(x)$ nor $\kappa(y)$ is collinear with $p$. In any partial quadrangle there exists at most one vertex $q$ on $l$ that is collinear with $p$ (otherwise $\Pi$ would contain a triangle of lines). So we have  $\#(\Gamma, \bbT)\in\{0,1\}$. If $\#(\Gamma,\bbT)=0$, then $\Gamma$ is triangle-free and if   $\#(\Gamma,\bbT)=1$, then $\Pi$ is a generalized quadrangle. By Theorem~\ref{GQqqsq}, we obtain that $t=s^2$. 

	``$\Leftarrow$:'' If $\Pi$ is a generalized quadrangle of order $(s,s^2)$, then $\Gamma$ is $3$-isoregular, by Corollary~\ref{gqthreeir}. So suppose that  $\Gamma$ is triangle-free. Let $u,v,w$ be three mutually distinct vertices of $\Gamma$. If the subgraph of $\Gamma$ induced by $u$, $v$, and $w$ contains an edge, then neither of the edges has a common neighbor (otherwise $\Gamma$ would contain triangles). So $u$, $v$, $w$ form a triad in $\Pi$.  Hence, they have $c$ common neighbors in $\Gamma$. Consequently, $\Gamma$ is $3$-isoregular.
\end{proof}

$3$-isoregular triangle free graphs appear to be extremely rare. The following observation was made by R.~Noda:
\begin{proposition}[{\cf \cite[Page 70]{Cam75}}]
	Let $\Gamma$ be a non-degenerate triangle-free $3$-isoregular graph in which any three pairwise non-adjacent points are joint to exactly $n$ vertices. Then $\Gamma$ is the point graph of a $\PQ((n^2+2n-1)(n+1), 1, n(n+1))$.
\end{proposition}
\begin{remark}
	The first two members of this series are the Clebsch graph ($n=1$) and the Higman-Sims graph ($n=2$). For $n=3$ the putative graph would have parameters $(v,k,\lambda,\mu)=(324,57,0,12)$. It was shown by A.~L.~Gavrilyuk and  A.~A.~Makhnev in \cite{GavMak05} that such a graph does not exist. For a very interesting account   about the history of the discovery of the Higman-Sims-graph we refer to \cite{KliWol17}.
\end{remark}

\begin{theorem}\label{threseven}
	Let $\Gamma$ be the point graph of a partial quadrangle and suppose that $\Gamma$ is $3$-isoregular. Then $\Gamma$ is $(3,7)$-regular. 
\end{theorem}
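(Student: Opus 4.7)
The plan is to proceed by induction on $l \in \{4,5,6,7\}$, proving that $\Gamma$ is $(3,l)$-regular. The base case $l=4$ is exactly the hypothesis of $3$-isoregularity, restated via Proposition~\ref{kisoregcond} (which gives $3$-isoregular $\Leftrightarrow (3,4)$-regular). For the inductive step, assuming $\Gamma$ is $(3,l-1)$-regular, Proposition~\ref{mncond} reduces $(3,l)$-regularity to verifying $\bbT$-regularity for each $(3,l)$-irreducible graph-type $\bbT$ of order $(3,l)$. Since $l\ge 5$, Lemma~\ref{connectcond} applies: such $\bbT=(\Delta,\iota,\Theta)$ is $(3,l)$-irreducible exactly when the closure $\Cl(\bbT)$ is $4$-connected. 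So at each level one is only forced to inspect types whose underlying $\Theta$ is a $4$-connected graph on $l$ vertices.

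The first simplification comes from Theorem~\ref{srg-pq}: the point-graph of a partial quadrangle contains no induced $K_4-e$. Consequently every graph-type whose ambient $\Theta$ contains an induced $K_4-e$ has $\#(\Gamma,\bbT)=0$ for every $\kappa$, and $\bbT$-regularity is automatic. The substantive task is therefore to enumerate the $4$-connected graphs on $l\le 7$ vertices that are free of induced $K_4-e$, attach each admissible $3$-vertex base $\Delta$, and show the number of extensions is independent of $\kappa$.

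By Proposition~\ref{threeisoregpq}, the $3$-isoregularity hypothesis forces either (i) $\Pi$ is a generalized quadrangle of order $(q,q^2)$, or (ii) $\Gamma$ is triangle-free. In case (ii) the relevant list shrinks to $4$-connected triangle-free graphs on at most $7$ vertices (a very short list beginning with $K_{3,3}$), and every count reduces to counting common neighbors of pairs/triples of vertices, which is controlled by the strongly-regular parameters together with the common triad-center number. In case (i) the maximal cliques are precisely the lines of size $q+1$, and Theorem~\ref{GQqqsq} gives that every triad has exactly $q+1$ centers; together with the inductively available $(3,l-1)$-regularity, these geometric constraints express each remaining count as a polynomial in $q$ that depends only on the edge/non-edge pattern on $\iota(\Delta)$, and not on $\kappa$ itself.

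The main obstacle is the inductive step at $l=7$, where the list of $4$-connected $K_4-e$-free graphs on $l$ vertices is largest, and each such graph produces several distinct graph-types according to which $3$-vertex induced subgraph is designated as the base. For every survivor one must either exhibit a forced induced $K_4-e$ in $\Theta$ (trivializing the count) or carry out an explicit reduction: in case (i), by counting through the line/triad skeleton, using the $q+1$ centers of every triad together with the $(3,6)$-regularity established at the previous level; in case (ii), by exploiting triangle-freeness and strong regularity directly. Since there are only finitely many types to check at each $l$ and each verification is purely combinatorial, the induction closes at $l=7$ and $\Gamma$ is $(3,7)$-regular.
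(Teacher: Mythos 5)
Your reduction machinery matches the paper's exactly: induct on $l$, use Proposition~\ref{kisoregcond} for the base case, Proposition~\ref{mncond} plus Lemma~\ref{connectcond} to restrict to types with $4$-connected closure, and the absence of induced $K_4-e$ (Theorem~\ref{srg-pq}) to discard any type whose $\Theta$ contains one. But the proof stops where the actual work begins. The decisive content of the argument is (a) the determination that, at each order $(3,l)$ for $l\in\{5,6,7\}$, exactly one type survives this sieve --- namely $K_l$ with a triangle as base --- and (b) the computation that for this type the number of extensions of any embedding of the base triangle is $\binom{s-2}{l-3}$ (independent of the embedding, because in a partial quadrangle every clique containing a given triangle lies on the line spanned by that triangle). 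You explicitly defer both: you describe the enumeration at $l=7$ as ``the main obstacle'' and say one ``must either exhibit a forced induced $K_4-e$ \dots or carry out an explicit reduction'' without doing either. As written, this is a plan for a proof, not a proof; the paper closes the gap with a computer search for (a) and the one-line clique-on-a-line count for (b).

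Two smaller points. First, you slide from ``$\Cl(\bbT)$ is $4$-connected'' to ``types whose underlying $\Theta$ is a $4$-connected graph'': these are not the same, since $\Cl(\bbT)$ adds the edges $\binom{S}{2}$ over the image $S$ of the base, so a type can survive the irreducibility test even though $\Theta$ itself is not $4$-connected. Your enumeration criterion is therefore slightly wrong as stated. Second, the case split via Proposition~\ref{threeisoregpq} into ``generalized quadrangle of order $(q,q^2)$'' versus ``triangle-free'' is unnecessary and, in the form you give it, misleading: $K_{3,3}$ is only $3$-connected, and in fact no triangle-free graph on at most $7$ vertices can pass the (corrected) closure-connectivity test, so the triangle-free branch is vacuous. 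The paper avoids the dichotomy entirely because the single formula $\#(\Gamma,\bbT)=\binom{s-2}{l-3}$ for $s\ge 2$ (and $0$ otherwise) covers both situations uniformly.
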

\begin{proof}
	As $\Gamma$ is $3$-isoregular, it is $(3,4)$-regular. By Corollary \ref{tvertcond}, in order to prove $(3,5)$-regularity of $\Gamma$ it suffices to prove the $\bbT$-regularity  for all graph types $\bbT$ of order $(3,5)$ for which $\Cl(\bbT)$ is $4$-connected. Since $\Gamma$ does not have $K_4-e$ as an induced subgraph, we can shorten this list by all $\bbT$ whose underlying graph contains $K_4-e$.  A computer search reveals that only the graph type type $\bbT_a$ depicted below  fulfills all these requirements:
    \[
    \begin{tikzpicture}[rotate=-162,scale=0.3]
        \SetGraphUnit{1}
        \GraphInit[vstyle=Welsh]
        \renewcommand*{\VertexSmallMinSize}{2pt}
        \SetVertexMath
        \Vertices[unit=3,NoLabel]{circle}{A,B,C,D,E}
        \Edges(A,B,C,D,E,A,C,E,B,D,A)
        \node[below=2ex] at (B) {\ensuremath{\bbT_{a}}};
        \AddVertexColor{black}{A,B,C}
    \end{tikzpicture}\quad
    \begin{tikzpicture}[rotate=-150,scale=0.3]
        \SetGraphUnit{1}
        \GraphInit[vstyle=Welsh]
        \renewcommand*{\VertexSmallMinSize}{2pt}
        \SetVertexMath
        \Vertices[unit=3,NoLabel]{circle}{A,B,C,D,E,F}
        \Edges(A,B,C,D,E,F,A,C,E,A)
        \Edges(B,D,F,B)
        \Edge(A)(D)
        \Edge(B)(E)
        \Edge(C)(F)
        \node[below=2ex] at (B) {\ensuremath{\bbT_{b}}};
        \AddVertexColor{black}{A,B,C}
    \end{tikzpicture}\quad
    \begin{tikzpicture}[rotate=-141.429,scale=0.3]
        \SetGraphUnit{1}
        \GraphInit[vstyle=Welsh]
        \renewcommand*{\VertexSmallMinSize}{2pt}
        \SetVertexMath
        \Vertices[unit=3,NoLabel]{circle}{A,B,C,D,E,F,G}
        \Edges(A,B,C,D,E,F,G,A,C,E,G,B,D,F,A,D,G,C,F,B,E,A)
        \node[below=2ex] at (B) {\ensuremath{\bbT_{c}}};
        \AddVertexColor{black}{A,B,C}
    \end{tikzpicture}
    \]
    However, it is easy to see that 
    \[
    	\#(\Gamma,\bbT_a)=
    	\begin{cases}
        	(s-2)(s-3) & s\ge 4\\
            0 & \text{else.}
        \end{cases}
    \]
    Thus, $\Gamma$ is $(3,5)$-regular. 
    
    With the same reasoning as before and again using a computer, we obtain that $\Gamma$ is $(3,6)$-regular if and only if it is $\bbT_b$-regular. However, it is easy to see that 
    \[
    	\#(\Gamma,\bbT_b)=
    	\begin{cases}
            (s-2)(s-3)(s-4) & s\ge 5\\
            0 & \text{else.}
        \end{cases}
    \]
    Thus, $\Gamma$ is $(3,6)$-regular. 
    
    Finally, once more using the same reasoning as above and using a computer, we obtain that  $\Gamma$ is $(3,7)$-regular if and only if it is $\bbT_c$-regular. However, it is easy to see that 
    \[
    	\#(\Gamma,\bbT_c)=
    	\begin{cases}
            (s-2)(s-3)(s-4)(s-5) & s\ge 6\\
            0 & \text{else.}
        \end{cases}\]
    Thus, $\Gamma$ is $(3,7)$-regular. 
\end{proof}
The previous theorem generalizes a result by Reichard (\cite[Theorem 2]{Rei15}) that states that the point graphs of generalized quadrangles of order $(q,q^2)$ satisfy the $7$-vertex condition. 
\begin{corollary}
	Let $\Gamma$ be the point graph of a partial quadrangle and suppose that $\Gamma$ is $3$-isoregular. Then, for every $v\in V(\Gamma)$, the second subconstituent $\Gamma_2(v)$ satisfies the $6$-vertex condition. 
\end{corollary}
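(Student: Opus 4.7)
The proof is essentially a three-line chain of invocations of results proved earlier in the paper, so the plan is to put them together rather than to do new work.

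First, I would invoke Theorem~\ref{threseven} (which is the main result immediately preceding the corollary) to conclude that under the hypothesis ``$\Gamma$ is the point-graph of a partial quadrangle and $\Gamma$ is $3$-isoregular'', $\Gamma$ itself is $(3,7)$-regular. This gets us out of the specific combinatorial geometry setting and back into the purely graph-theoretic language of $(m,n)$-regularity.

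Second, I would apply the proposition on subconstituents (the unnumbered proposition at the end of Section~\ref{regularitycond} stating that if $\Gamma$ is $(m,n)$-regular with $m\ge 1$ and $v\in V(\Gamma)$, then both $\Gamma_1(v)$ and $\Gamma_2(v)$ are $(m-1,n-1)$-regular). With $(m,n)=(3,7)$, this immediately yields that $\Gamma_2(v)$ is $(2,6)$-regular for every $v\in V(\Gamma)$.

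Third, to translate $(2,6)$-regularity into the $6$-vertex condition, I would apply the lemma ``a graph $\Gamma$ satisfies the $t$-vertex condition if and only if it is $(2,t)$-regular'' with $t=6$; this finishes the proof. There is essentially no obstacle here: the whole point of the machinery developed in the paper is precisely to allow this kind of bookkeeping argument, and once Theorem~\ref{threseven} is in hand, the corollary is a formal consequence. The only thing worth flagging is that one really uses $\Gamma_2(v)$ rather than $\Gamma_1(v)$ only in the statement; the same reasoning would equally give that $\Gamma_1(v)$ satisfies the $6$-vertex condition, but the authors presumably single out the second subconstituent because it is the one of geometric interest in the partial-quadrangle setting.
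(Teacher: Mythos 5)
Your proposal is correct and is precisely the intended derivation: the paper states this corollary without proof as an immediate consequence of Theorem~\ref{threseven}, the subconstituent proposition ($(3,7)$-regularity of $\Gamma$ gives $(2,6)$-regularity of $\Gamma_2(v)$), and the lemma identifying $(2,t)$-regularity with the $t$-vertex condition. Nothing is missing.
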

\begin{proof}
	This follows from Proposition~\ref{constituents}.
\end{proof}
The previous corollary applies in particular to the point graphs of generalized quadrangles of order $(q,q^2)$. It is clear, that if $\Gamma$ is  the point graph of a generalized quadrangle of  of order $(q,q^2)$, then for every $v\in V(\Gamma)$, the graph $\Gamma_2(v)$ is the point graph of a partial quadrangle of order $(q-1,q^2,q^2-q)$. On the other hand it was shown by A.~A.~Ivanov and S.~V.~Shpectorov in \cite[Theorem A]{IvaShp91} that every partial quadrangle of order $(q-1,q^2,q^2+q)$ extends to a generalized quadrangle of order $(q,q^2)$. Consequently, we obtain.
\begin{corollary}\label{pqsixvert}
    Let $\Gamma$ be the point graph of a partial quadrangle of order $(q-1,q^2,q^2-q)$. Then $\Gamma$ satisfies the $6$-vertex condition.
\end{corollary}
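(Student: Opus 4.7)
The plan is to reduce the statement to the previous corollary via the Ivanov--Shpektorov extension theorem. Let $\Pi$ be a partial quadrangle of order $(q-1,q^2,q^2-q)$ and let $\Gamma$ be its point graph. By \cite[Thm.~A]{IvaShp91}, $\Pi$ embeds (as the second-subconstituent structure at a point) into a generalized quadrangle $\widetilde\Pi$ of order $(q,q^2)$. Writing $\widetilde\Gamma$ for the point graph of $\widetilde\Pi$, this gives a vertex $v\in V(\widetilde\Gamma)$ such that $\Gamma\cong\widetilde\Gamma_2(v)$.

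Next I would invoke the structural facts already established. By Corollary~\ref{gqthreeir}, $\widetilde\Gamma$ is $3$-isoregular, and it is evidently the point graph of a partial quadrangle (every generalized quadrangle being a partial quadrangle). Thus $\widetilde\Gamma$ satisfies the hypotheses of the previous corollary, whence its second subconstituent $\widetilde\Gamma_2(v)$ satisfies the $6$-vertex condition. Since $\Gamma\cong\widetilde\Gamma_2(v)$, the $6$-vertex condition is preserved under isomorphism, and we conclude that $\Gamma$ satisfies the $6$-vertex condition.

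The statement is essentially a packaging corollary: the only nonroutine ingredient is the Ivanov--Shpektorov extension theorem, which is cited as a black box and provides the identification $\Gamma\cong\widetilde\Gamma_2(v)$. Once that identification is in hand, Corollary~\ref{gqthreeir} and the preceding corollary on second subconstituents of $3$-isoregular point graphs of partial quadrangles do all the work. The main potential subtlety to verify is that the parameters match: one should check that with $\widetilde\Pi$ a $\GQ(q,q^2)$, the second subconstituent $\widetilde\Gamma_2(v)$ indeed has the partial-quadrangle parameters $(q-1,q^2,q^2-q)$ needed to line up with the hypothesis on $\Pi$, and that the extension in \cite{IvaShp91} is compatible with taking point graphs in the obvious way. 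This is already asserted in the paragraph immediately preceding the corollary, so no new computation is required.
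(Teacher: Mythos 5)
Your proposal is correct and follows exactly the route the paper takes: extend $\Pi$ to a $\GQ(q,q^2)$ via the Ivanov--Shpektorov theorem, note its point graph is $3$-isoregular by Corollary~\ref{gqthreeir}, and apply the preceding corollary on second subconstituents. The parameter check you flag is precisely the content of the paragraph before the statement, so nothing further is needed.
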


\begin{example}
    There exists an infinite family of generalized quadrangles of order $(q,q^2)$ whose point graphs are non rank $3$ graphs (\cf \cite{Kan80,Kan86,Pay92}). By Theorem~\ref{threseven}, the point graph of any such generalized quadrangle is $(3,7)$-regular. The second subconstituents of these graphs give rise to a hitherto unknown family of non-rank 3 graphs satisfying the $6$-vertex condition. 
    
    The smallest actual example is the point graph $\Gamma$ of a non-classical generalized quadrangle of order $(5,25)$. Its parameters are given by $(v,k,\lambda,\mu)=(756,130,4,26)$. Its automorphism group is intransitive of rank $11$.
    
    $\Gamma$ has two non-isomorphic second subconstituents $\Gamma'$ and $\Gamma''$. Both satisfy the $6$-vertex condition and both are in turn point graphs of partial quadrangles of order  $(4,25,20)$. The automorphism group of $\Gamma'$ is intransitive of rank $52$ and the automorphism group of $\Gamma''$ is transitive of rank $5$. 
\end{example}

\begin{proposition}
	Let $\Gamma$ be the point graph of a partial quadrangle, and suppose that $\Gamma$ is $3$-isoregular. Then $\Gamma$ satisfies the $8$-vertex condition if and only if it is regular for the following graph types: 
	\[\scalebox{1}{
	    \begin{tikzpicture}[scale=0.6]
	        \SetGraphUnit{1}
	        \GraphInit[vstyle=Welsh]
	        \renewcommand*{\VertexSmallMinSize}{2pt}
	        \SetVertexMath
	        \Vertices[dir=\SO,NoLabel]{line}{a,b,c,d}
	        \Vertices[x=2.5,y=0,dir=\SO,NoLabel]{line}{t,u,v,w}
	        \Edges(a,t,b,u,c,v,d,u,a,v,b,w,c,t,d)
	        \Edge(a)(w) 
	        \AddVertexColor{black}{d,w}
	    \end{tikzpicture} \qquad 
	    \begin{tikzpicture}[scale=0.6]
	        \SetGraphUnit{1}
	        \GraphInit[vstyle=Welsh]
	        \renewcommand*{\VertexSmallMinSize}{2pt}
	        \SetVertexMath
	        \Vertices[dir=\SO,NoLabel]{line}{a,b,c,d}
	        \Vertices[x=2.5,y=0,dir=\SO,NoLabel]{line}{t,u,v,w}
	        \Edges(a,t,b,u,c,v,d,w,b,v,a,u,d,t,c,w,a) 
	        \AddVertexColor{black}{d,w}
	    \end{tikzpicture} \qquad
	    \begin{tikzpicture}[scale=0.6]
	        \SetGraphUnit{1}
	        \GraphInit[vstyle=Welsh]
	        \renewcommand*{\VertexSmallMinSize}{2pt}
	        \SetVertexMath
	        \Vertices[dir=\SO,NoLabel]{line}{a,b,c,d}
	        \Vertices[x=2.5,y=0,dir=\SO,NoLabel]{line}{t,u,v,w}
	        \Edges(a,t,b,u,c,v,d,w,b,v,a,u,d,t,c,w,a) 
	        \AddVertexColor{black}{d,c}
	    \end{tikzpicture}}
	\]
\end{proposition}
\begin{proof}
	By Theorem~\ref{threseven} we already know that $\Gamma$ is $(3,7)$-regular. Let $\mathcal{M}$ be a transversal of all isomorphism classes of graph types of order $(m,n)$, where $m\le 3$ and $n\le 7$. To show that $\Gamma$ satisfies the $8$-vertex condition means to show that it is $(2,8)$-regular. By Proposition~\ref{redlist} it suffices to show that $\Gamma$ is $\bbT$-regular for all graph types of order $(2,8)$ that are $\widehat\bbT$-irreducible, for all $\widehat\bbT\in\mathcal{M}$. We can reduce the list of graph types further by observing that they should not contain an induced subgraph isomorphic to $K_4-e$. When asking the computer for a complete list of such graph types it returns the above depicted graph types and the four graph types given below (for better visibility they are depicted as geometric configurations rather than graphs): 
	\[
	\begin{matrix}
	    \begin{tikzpicture}[scale=0.6]
	        \tkzDefPoint(0,0){a}
	        \tkzDefPoint(0,1){b}
	        \tkzDefPoint(0,2){c}
	        \tkzDefPoint(0,3){d}
	        \tkzDefPoint(2,0){t}
	        \tkzDefPoint(2,1){u}
	        \tkzDefPoint(2,2){v}
	        \tkzDefPoint(2,3){w}
\tkzDrawPoints[shape=circle,fill=black,size=13](a,t)
\tkzDrawPoints[shape=circle,fill=white,size=13](b,c,d,u,v,w)
	        \tkzDrawLine(a,d)
	        \tkzDrawLine(t,w)
	        \tkzDrawLine(d,w)
	        \tkzDrawLine(c,v)
	        \tkzDrawLine(b,u)
	    \end{tikzpicture} & 
	    \begin{tikzpicture}[scale=0.6]
	        \tkzDefPoint(0,0){a}
	        \tkzDefPoint(0,1){b}
	        \tkzDefPoint(0,2){c}
	        \tkzDefPoint(0,3){d}
	        \tkzDefPoint(2,0){t}
	        \tkzDefPoint(2,1){u}
	        \tkzDefPoint(2,2){v}
	        \tkzDefPoint(2,3){w}
\tkzDrawPoints[shape=circle,fill=black,size=13](a,u)
\tkzDrawPoints[shape=circle,fill=white,size=13](b,c,d,t,v,w)
	        \tkzDrawLine(a,d)
	        \tkzDrawLine(t,w)
	        \tkzDrawLine(d,w)
	        \tkzDrawLine(c,v)
	        \tkzDrawLine(b,u)
	        \tkzDrawLine(a,t)
	    \end{tikzpicture}&
	    \begin{tikzpicture}[scale=0.6]
	        \tkzDefPoint(0,0){a}
	        \tkzDefPoint(0,1){b}
	        \tkzDefPoint(0,2){c}
	        \tkzDefPoint(0,3){d}
	        \tkzDefPoint(2,0){t}
	        \tkzDefPoint(2,1){u}
	        \tkzDefPoint(2,2){v}
	        \tkzDefPoint(2,3){w}
\tkzDrawPoints[shape=circle,fill=black,size=13](a,b)
\tkzDrawPoints[shape=circle,fill=white,size=13](c,d,t,u,v,w)
	        \tkzDrawLine(a,d)
	        \tkzDrawLine(t,w)
	        \tkzDrawLine(d,w)
	        \tkzDrawLine(c,v)
	        \tkzDrawLine(b,u)
	        \tkzDrawLine(a,t)
	    \end{tikzpicture}&
	    \begin{tikzpicture}[scale=0.6]
	        \tkzDefPoint(0,0){a}
	        \tkzDefPoint(0,1){b}
	        \tkzDefPoint(0,2){c}
	        \tkzDefPoint(0,3){d}
	        \tkzDefPoint(2,0){t}
	        \tkzDefPoint(2,1){u}
	        \tkzDefPoint(2,2){v}
	        \tkzDefPoint(2,3){w}
\tkzDrawPoints[shape=circle,fill=black,size=13](a,t)
\tkzDrawPoints[shape=circle,fill=white,size=13](b,c,d,u,v,w)
	        \tkzDrawLine(a,d)
	        \tkzDrawLine(t,w)
	        \tkzDrawLine(d,w)
	        \tkzDrawLine(c,v)
	        \tkzDrawLine(b,u)
	        \tkzDrawLine(a,t)
	    \end{tikzpicture}\\
	    \bbT_1 & \bbT_2 & \bbT_3 & \bbT_4
	\end{matrix}
\]
	By Proposition~\ref{threeisoregpq}, $\Gamma$ is either the point graph of a generalized quadrangle of order $(q,q^2)$ or it is triangle free. Neither of the graph types $\bbT_1,\dots,\bbT_4$ is triangle free. Thus, if $\Gamma$ is triangle free then we are done. Suppose therefore that $\Gamma$ is the point graph of a generalized quadrangle $\Pi=(\mathscr{P},\mathscr{L})$ of order $(q,q^2)$, and suppose that the parameters of $\Gamma$, considered as a strongly regular graph, are $(v,k,\lambda,\mu)$.  Then we compute:
	\begin{align*}
		\#(\Gamma,\bbT_1) &= (t+1)t(s-1)(s-2)(s-3),\\
		\#(\Gamma,\bbT_2) &= (t+1)t(s-2)(s-2),\\
		\#(\Gamma,\bbT_3) &= (k-2)t(s-1)(s-2),\\
		\#(\Gamma,\bbT_4) &= t^2s(s-1)(s-2).\qedhere
	\end{align*}
\end{proof}

Let us at the end have a look on partial quadrangles $\Pi$ in which every triad has the same number $c$ of centers, but where the point graph $\Gamma$ is not necessarily $3$-isoregular. 
\begin{lemma}\label{pqthreefour}
    Let $\Pi$ be a partial quadrangle in which every triad has $c$ centers, and let $\Gamma$ be the point graph of $\Pi$. Then $\Gamma$ is regular for all graph types of order $(3,4)$, except possibly the following:
	\begin{gather*}
    \begin{tikzpicture}[rotate=-45]
        \SetGraphUnit{1}
        \GraphInit[vstyle=Welsh]
        \renewcommand*{\VertexSmallMinSize}{2pt}
        \SetVertexMath
	    \SetVertexNoLabel	
        \Vertex[Lpos=-135]{x}
        \EA[Lpos=-90](x){y}
        \NOEA[Lpos=-45](x){z}
        \NO[NoLabel](x){u}
        \Edge(x)(y)
        \Edge(u)(x)
        \Edge(u)(y)
        \Edge(u)(z)
        \AddVertexColor{black}{x,y,z}
    \end{tikzpicture}\qquad
    \begin{tikzpicture}[rotate=-45]
        \SetGraphUnit{1}
        \GraphInit[vstyle=Welsh]
        \renewcommand*{\VertexSmallMinSize}{2pt}
        \SetVertexMath
	    \SetVertexNoLabel	
        \Vertex[Lpos=-135]{x}
        \EA[Lpos=-90](x){y}
        \NOEA[Lpos=-45](x){z}
        \NO[NoLabel](x){u}
        \Edge(x)(y)
        \Edge(u)(x)
        \Edge(u)(y)
        \AddVertexColor{black}{x,y,z}
    \end{tikzpicture}\qquad
    \begin{tikzpicture}[rotate=-45]
        \SetGraphUnit{1}
        \GraphInit[vstyle=Welsh]
        \renewcommand*{\VertexSmallMinSize}{2pt}
        \SetVertexMath
	    \SetVertexNoLabel	
        \Vertex[Lpos=-135]{x}
        \EA[Lpos=-90](x){y}
        \NOEA[Lpos=-45](x){z}
        \NO[NoLabel](x){u}
        \Edge(u)(x)
        \Edge(u)(z)
        \Edge(x)(y)
        \AddVertexColor{black}{x,y,z}
    \end{tikzpicture}\\
    \begin{tikzpicture}[rotate=-45]
        \SetGraphUnit{1}
        \GraphInit[vstyle=Welsh]
        \renewcommand*{\VertexSmallMinSize}{2pt}
        \SetVertexMath
        \Vertex[Lpos=-135,NoLabel]{x}
        \EA[Lpos=-90,NoLabel](x){y}
        \NOEA[Lpos=-45,NoLabel](x){z}
        \NO[NoLabel](x){u}
        \Edge(x)(y)
        \Edge(x)(u)
        \AddVertexColor{black}{x,y,z}
    \end{tikzpicture}\qquad
    \begin{tikzpicture}[rotate=-45]
        \SetGraphUnit{1}
        \GraphInit[vstyle=Welsh]
        \renewcommand*{\VertexSmallMinSize}{2pt}
        \SetVertexMath
        \Vertex[Lpos=-135,NoLabel]{x}
        \EA[Lpos=-90,NoLabel](x){y}
        \NOEA[Lpos=-45,NoLabel](x){z}
        \NO[NoLabel](x){u}
        \Edge(x)(y)
        \Edge(z)(u)
        \AddVertexColor{black}{x,y,z}
    \end{tikzpicture}\qquad
    \begin{tikzpicture}[rotate=-45]
        \SetGraphUnit{1}
        \GraphInit[vstyle=Welsh]
        \renewcommand*{\VertexSmallMinSize}{2pt}
        \SetVertexMath
	    \SetVertexNoLabel	
        \Vertex[Lpos=-135]{x}
        \EA[Lpos=-90](x){y}
        \NOEA[Lpos=-45](x){z}
        \NO[NoLabel](x){u}
        \Edge(x)(y)
        \AddVertexColor{black}{x,y,z}
    \end{tikzpicture}
    \end{gather*}
\end{lemma}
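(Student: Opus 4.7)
The plan is to enumerate all isomorphism classes of graph-type of order $(3,4)$ according to the isomorphism type of the base $\Delta$, of which there are four: $\overline{K_3}$ (a triad), $K_2\sqcup K_1$ (an edge plus an isolated vertex), $P_3$ (a path of length $2$), and $K_3$ (a triangle). Counting orbits of the extra vertex $u$ under $\operatorname{Aut}(\Delta)$ produces $4+6+6+4=20$ graph-types in total. The six exceptions listed in the statement are exactly those with base $K_2\sqcup K_1$, one for each orbit of $u$'s neighborhood in $\{x,y,z\}$. My job is thus to establish $\bbT$-regularity for the remaining fourteen.

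For each such type I would fix an embedding $\kappa:\Delta\injto\Gamma$ and count, via inclusion--exclusion on neighborhoods, the vertices $u\in V(\Gamma)\setminus\{\kappa(x),\kappa(y),\kappa(z)\}$ satisfying the prescribed adjacency pattern. The only inputs of the inclusion--exclusion are $v$, $k$, the three pairwise intersections $|N(\kappa(a))\cap N(\kappa(b))|$ for the pairs $\{a,b\}\subseteq\{x,y,z\}$, the triple intersection $|N(\kappa(x))\cap N(\kappa(y))\cap N(\kappa(z))|$, and a routine correction for those members of $\{\kappa(x),\kappa(y),\kappa(z)\}$ themselves satisfying the pattern. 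By strong regularity each pairwise intersection equals $\lambda=s-1$ or $\mu$ depending on whether the corresponding pair is an edge of $\Delta$, and hence is constant. Everything therefore reduces to showing that the triple intersection is determined by the isomorphism type of $\Delta$ whenever $\Delta\not\cong K_2\sqcup K_1$.

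The triple-intersection analysis is the heart of the argument. For $\Delta=\overline{K_3}$ the intersection has size $c$ by the hypothesis of the lemma. For $\Delta=K_3$, the three images $\kappa(x),\kappa(y),\kappa(z)$ are pairwise collinear and so, by axiom~\ref{pq1}, lie on a common line $l$; any common neighbor of them is collinear with two of them and must therefore lie on the unique line through that pair, which is $l$, giving the constant value $s-2$. For $\Delta=P_3$, written as $x-y-z$, a common neighbor would lie both on the line through $\kappa(x),\kappa(y)$ and on the line through $\kappa(y),\kappa(z)$; these lines are distinct because otherwise $\kappa(x),\kappa(z)$ would be collinear, contradicting the non-edge, and by axiom~\ref{gq3} they meet in only $\kappa(y)$, so the triple intersection is empty.

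Substituting these values into the inclusion--exclusion formula yields, for each of the fourteen non-excluded types, a count depending only on the parameters $v,k,\lambda,\mu,s,c$, proving $\bbT$-regularity. The only mildly delicate subcases are those in which $\Theta$ contains $K_4-e$, namely $\Delta=P_3$ with $u$ joined to all three vertices and $\Delta=K_3$ with $u$ joined to exactly two of them; in both, Theorem~\ref{srg-pq} forces $\#(\Gamma,\bbT,\kappa)=0$, which is still constant. There is no deep obstacle; the enumeration simply fails to produce a constant answer precisely when $\Delta\cong K_2\sqcup K_1$, because then the triple intersection counts the points on the line through $\kappa(x),\kappa(y)$ that are collinear with $\kappa(z)$, and this number genuinely depends on how $\kappa(z)$ sits relative to that line.
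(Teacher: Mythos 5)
Your proposal is correct. The paper states this lemma without giving a proof, so there is nothing to compare against; your argument --- classifying the $20$ types of order $(3,4)$ by the isomorphism type of the base, reducing each count by inclusion--exclusion to the triple neighborhood intersection, and showing that intersection equals $c$, $0$, and $s-2$ for the bases $\overline{K_3}$, $P_3$, and $K_3$ respectively (while it is genuinely non-constant only for $K_2\sqcup K_1$) --- is the natural complete justification and is sound.
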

\begin{proof}
	The above list of graph types was obtained by computer analysis of the domination order of graph types of order $(3,3)$ and $(3,4)$, using the known regularities of $\Gamma$ and the type counting lemma. In this case, since the amount of work is not too high, we give a computer free proof:
	Let us first of all list all graph types of order $(3,4)$ not mentioned above:
	\[\setlength{\arraycolsep}{0.8em}
	\begin{matrix} & 
    \begin{tikzpicture}[rotate=-45]
        \SetGraphUnit{1}
        \GraphInit[vstyle=Welsh]
        \renewcommand*{\VertexSmallMinSize}{2pt}
        \SetVertexMath
	    \SetVertexNoLabel	
        \Vertex[Lpos=-135]{x}
        \EA[Lpos=-90](x){y}
        \NOEA[Lpos=-45](x){z}
        \NO[NoLabel](x){u}
        \AddVertexColor{black}{x,y,z}
    \end{tikzpicture} &
    \begin{tikzpicture}[rotate=-45]
        \SetGraphUnit{1}
        \GraphInit[vstyle=Welsh]
        \renewcommand*{\VertexSmallMinSize}{2pt}
        \SetVertexMath
	    \SetVertexNoLabel	
        \Vertex[Lpos=-135]{x}
        \EA[Lpos=-90](x){y}
        \NOEA[Lpos=-45](x){z}
        \NO[NoLabel](x){u}
        \Edge(x)(u)
        \AddVertexColor{black}{x,y,z}
    \end{tikzpicture} & 
    \begin{tikzpicture}[rotate=-45]
        \SetGraphUnit{1}
        \GraphInit[vstyle=Welsh]
        \renewcommand*{\VertexSmallMinSize}{2pt}
        \SetVertexMath
	    \SetVertexNoLabel	
        \Vertex[Lpos=-135]{x}
        \EA[Lpos=-90](x){y}
        \NOEA[Lpos=-45](x){z}
        \NO[NoLabel](x){u}
        \Edge(x)(u)
        \Edge(z)(u)
        \AddVertexColor{black}{x,y,z}
    \end{tikzpicture} & 
    \begin{tikzpicture}[rotate=-45]
        \SetGraphUnit{1}
        \GraphInit[vstyle=Welsh]
        \renewcommand*{\VertexSmallMinSize}{2pt}
        \SetVertexMath
	    \SetVertexNoLabel	
        \Vertex[Lpos=-135]{x}
        \EA[Lpos=-90](x){y}
        \NOEA[Lpos=-45](x){z}
        \NO[NoLabel](x){u}
        \Edge(x)(u)
        \Edge(z)(u)
        \Edge(y)(u)
        \AddVertexColor{black}{x,y,z}
    \end{tikzpicture}\\
    & \bbT_{1,1} & \bbT_{1,2} & \bbT_{1,3} & \bbT_{1,4}\\[2ex] 
    \begin{tikzpicture}[rotate=-45]
        \SetGraphUnit{1}
        \GraphInit[vstyle=Welsh]
        \renewcommand*{\VertexSmallMinSize}{2pt}
        \SetVertexMath
	    \SetVertexNoLabel	
        \Vertex[Lpos=-135]{x}
        \EA[Lpos=-90](x){y}
        \NOEA[Lpos=-45](x){z}
        \NO[NoLabel](x){u}
        \Edge(x)(y)
        \Edge(y)(z)
        \AddVertexColor{black}{x,y,z}
    \end{tikzpicture} &
    \begin{tikzpicture}[rotate=-45]
        \SetGraphUnit{1}
        \GraphInit[vstyle=Welsh]
        \renewcommand*{\VertexSmallMinSize}{2pt}
        \SetVertexMath
	    \SetVertexNoLabel	
        \Vertex[Lpos=-135]{x}
        \EA[Lpos=-90](x){y}
        \NOEA[Lpos=-45](x){z}
        \NO[NoLabel](x){u}
        \Edge(x)(y)
        \Edge(y)(z)
        \Edge(x)(u)
        \AddVertexColor{black}{x,y,z}
    \end{tikzpicture} &
    \begin{tikzpicture}[rotate=-45]
        \SetGraphUnit{1}
        \GraphInit[vstyle=Welsh]
        \renewcommand*{\VertexSmallMinSize}{2pt}
        \SetVertexMath
	    \SetVertexNoLabel	
        \Vertex[Lpos=-135]{x}
        \EA[Lpos=-90](x){y}
        \NOEA[Lpos=-45](x){z}
        \NO[NoLabel](x){u}
        \Edge(x)(y)
        \Edge(y)(z)
        \Edge(y)(u)
        \AddVertexColor{black}{x,y,z}
    \end{tikzpicture} &
    \begin{tikzpicture}[rotate=-45]
        \SetGraphUnit{1}
        \GraphInit[vstyle=Welsh]
        \renewcommand*{\VertexSmallMinSize}{2pt}
        \SetVertexMath
	    \SetVertexNoLabel	
        \Vertex[Lpos=-135]{x}
        \EA[Lpos=-90](x){y}
        \NOEA[Lpos=-45](x){z}
        \NO[NoLabel](x){u}
        \Edge(x)(y)
        \Edge(y)(z)
        \Edge(x)(u)
        \Edge(y)(u)
        \AddVertexColor{black}{x,y,z}
    \end{tikzpicture} &
    \begin{tikzpicture}[rotate=-45]
        \SetGraphUnit{1}
        \GraphInit[vstyle=Welsh]
        \renewcommand*{\VertexSmallMinSize}{2pt}
        \SetVertexMath
	    \SetVertexNoLabel	
        \Vertex[Lpos=-135]{x}
        \EA[Lpos=-90](x){y}
        \NOEA[Lpos=-45](x){z}
        \NO[NoLabel](x){u}
        \Edge(x)(y)
        \Edge(y)(z)
        \Edge(x)(u)
        \Edge(z)(u)
        \AddVertexColor{black}{x,y,z}
    \end{tikzpicture} &
    \begin{tikzpicture}[rotate=-45]
        \SetGraphUnit{1}
        \GraphInit[vstyle=Welsh]
        \renewcommand*{\VertexSmallMinSize}{2pt}
        \SetVertexMath
	    \SetVertexNoLabel	
        \Vertex[Lpos=-135]{x}
        \EA[Lpos=-90](x){y}
        \NOEA[Lpos=-45](x){z}
        \NO[NoLabel](x){u}
        \Edge(x)(y)
        \Edge(y)(z)
        \Edge(x)(u)
        \Edge(y)(u)
        \Edge(z)(u)
        \AddVertexColor{black}{x,y,z}
    \end{tikzpicture} \\
    \bbT_{2,1} & \bbT_{2,2} & \bbT_{2,3} & \bbT_{2,4} & \bbT_{2,5} & \bbT_{2,6}\\[2ex]
     & 
    \begin{tikzpicture}[rotate=-45]
        \SetGraphUnit{1}
        \GraphInit[vstyle=Welsh]
        \renewcommand*{\VertexSmallMinSize}{2pt}
        \SetVertexMath
	    \SetVertexNoLabel	
        \Vertex[Lpos=-135]{x}
        \EA[Lpos=-90](x){y}
        \NOEA[Lpos=-45](x){z}
        \NO[NoLabel](x){u}
        \Edge(x)(y)
        \Edge(y)(z)
        \Edge(x)(z)
        \AddVertexColor{black}{x,y,z}
    \end{tikzpicture} &
    \begin{tikzpicture}[rotate=-45]
        \SetGraphUnit{1}
        \GraphInit[vstyle=Welsh]
        \renewcommand*{\VertexSmallMinSize}{2pt}
        \SetVertexMath
	    \SetVertexNoLabel	
        \Vertex[Lpos=-135]{x}
        \EA[Lpos=-90](x){y}
        \NOEA[Lpos=-45](x){z}
        \NO[NoLabel](x){u}
        \Edge(x)(y)
        \Edge(y)(z)
        \Edge(x)(z)
        \Edge(x)(u)
        \AddVertexColor{black}{x,y,z}
    \end{tikzpicture} &
    \begin{tikzpicture}[rotate=-45]
        \SetGraphUnit{1}
        \GraphInit[vstyle=Welsh]
        \renewcommand*{\VertexSmallMinSize}{2pt}
        \SetVertexMath
	    \SetVertexNoLabel	
        \Vertex[Lpos=-135]{x}
        \EA[Lpos=-90](x){y}
        \NOEA[Lpos=-45](x){z}
        \NO[NoLabel](x){u}
        \Edge(x)(y)
        \Edge(y)(z)
        \Edge(x)(z)
        \Edge(x)(u)
        \Edge(z)(u)
        \AddVertexColor{black}{x,y,z}
    \end{tikzpicture} &
    \begin{tikzpicture}[rotate=-45]
        \SetGraphUnit{1}
        \GraphInit[vstyle=Welsh]
        \renewcommand*{\VertexSmallMinSize}{2pt}
        \SetVertexMath
	    \SetVertexNoLabel	
        \Vertex[Lpos=-135]{x}
        \EA[Lpos=-90](x){y}
        \NOEA[Lpos=-45](x){z}
        \NO[NoLabel](x){u}
        \Edge(x)(y)
        \Edge(y)(z)
        \Edge(x)(z)
        \Edge(x)(u)
        \Edge(y)(u)
        \Edge(z)(u)
        \AddVertexColor{black}{x,y,z}
    \end{tikzpicture} \\
    & \bbT_{3,1} & \bbT_{3,2} & \bbT_{3,3} & \bbT_{3,4}
	\end{matrix}
	\]
	Suppose that the parameters of $\Gamma$ as a strongly regular graph are $(v,k,\lambda,\mu)$. Then we count:
	\begin{align*}
		\#(\Gamma,\bbT_{1,4}) &= c, \\ 
		\#(\Gamma,\bbT_{1,3}) &= \lambda-\#(\Gamma,\bbT_{1,4}),\\
		\#(\Gamma,\bbT_{1,2}) &= k-2\#(\Gamma,\bbT_{1,3})-\#(\Gamma,\bbT_{1,4}),\\ 
		\#(\Gamma,\bbT_{1,1}) &= v-3\#(\Gamma,\bbT_{1,2})-3\#(\Gamma,\bbT_{1,3})-\#(\Gamma,\bbT_{1,4}),\\
		\#(\Gamma,\bbT_{2,6}) &= 0,\\
		\#(\Gamma,\bbT_{2,5}) &= \mu-1,\\
		\#(\Gamma,\bbT_{2,4}) &= \lambda,\\
		\#(\Gamma,\bbT_{2,3}) &= (k-2)-2\#(\Gamma,\bbT_{2,4}),\\
		\#(\Gamma,\bbT_{2,2}) &= (k-1) - \#(\Gamma,\bbT_{2,4})-\#(\Gamma,\bbT_{2,5}),\\
		\#(\Gamma,\bbT_{2,1}) &=v-2\#(\Gamma,\bbT_{2,2})-\#(\Gamma,\bbT_{2,3})-2\#(\Gamma,\bbT_{2,4}) -\#(\Gamma,\bbT_{2,5}).\qedhere
	\end{align*}
\end{proof}

\begin{proposition}
	Let $\Pi$ be a partial quadrangle in which every triad has $c$ centers, and let $\Gamma$ be the point graph of $\Pi$. Then $\Gamma$ satisfies the $6$-vertex condition if and only if it is regular for the following graph type:
	\[\setlength{\arraycolsep}{0.8em}
	\begin{matrix}
    \begin{tikzpicture}[rotate=0,scale=0.3]
        \SetGraphUnit{1}
        \GraphInit[vstyle=Welsh]
        \renewcommand*{\VertexSmallMinSize}{2pt}
        \SetVertexMath
	    \SetVertexNoLabel	
        \Vertices[unit=3,NoLabel]{circle}{A,B,C,D,X,Y}
        \Edges(X,D,C,X)
        \Edges(Y,A,B,Y)
        \Edge(C)(B)
        \Edge(D)(A)
        \AddVertexColor{black}{X,Y}
    \end{tikzpicture} &
    \begin{tikzpicture}[rotate=-60,scale=0.3]
        \SetGraphUnit{1}
        \GraphInit[vstyle=Welsh]
        \renewcommand*{\VertexSmallMinSize}{2pt}
        \SetVertexMath
        \Vertices[unit=3,NoLabel]{circle}{A,E,D,C,B,F}
        \Edges(F,B,E)
        \Edge(C)(D)
        \Edge(C)(F)
        \Edge(A)(E)
        \Edge(D)(E)
        \Edge(B)(C)
        \Edge(D)(A)
        \Edge(A)(F)
        \AddVertexColor{black}{F,A}
    \end{tikzpicture} &
    \begin{tikzpicture}[rotate=0,scale=0.3]
        \SetGraphUnit{1}
        \GraphInit[vstyle=Welsh]
        \renewcommand*{\VertexSmallMinSize}{2pt}
        \SetVertexMath
        \Vertices[unit=3,NoLabel]{circle}{B,C,D,A,E,F}
        \Edges(F,B,E)
        \Edge(C)(D)
        \Edge(C)(F)
        \Edge(A)(E)
        \Edge(D)(E)
        \Edge(B)(C)
        \Edge(D)(A)
        \Edge(A)(F)
        \AddVertexColor{black}{E,F}
    \end{tikzpicture}
	\end{matrix}
	\]
\end{proposition}
\begin{proof}
	The three given graph types are exactly those from Proposition~\ref{pq6vc} that are irreducible for any of the graph types of order $(3,4)$ depicted in the proof of Lemma~\ref{pqthreefour}. Now the claim follows from Proposition~\ref{redlist}.
\end{proof}

\section{Concluding remarks}
The $(m,n)$-regularity introduced in Section~\ref{regularitycond} is a very strong condition. It is in fact interesting only for $m\le 4$, because any $5$-isoregular graph is $5$-homogeneous and, in fact, homogeneous (\cite{GolKli78,Cam80,Gar76}). At first sight this appears to limit the use of the general theory of regularity conditions that is developed in this paper.  However, in principle the definitions and results from Section~\ref{regularitycond} are applicable to other categories of combinatorial objects. Finite metric spaces (possibly with integer or with rational distances), directed graphs, or semilinear spaces come to mind.

For the category of finite graphs, the most interesting are $(m,n)$-regular graphs where $m\in\{2,3,4\}$. Here the goal is to find  $(m,n)$-regular graphs that are not $m$-homogeneous and, if feasible, to classify such graphs completely, up to isomorphism. 

As was noted above, every graph $\Gamma$ is $(0,n)$-regular, because for every graph type $\bbT=(\emptyset,\iota,\Theta)$ the number $\#(\Gamma,\bbT)$ is equal to the number of embeddings of $\Theta$ into $\Gamma$. Nevertheless, counting subgraphs of a graph has been used as a global invariant for distinguishing non-isomorphic graphs. For instance, in \cite{KliKriWol14} subgraphs isomorphic to $K_4$ are counted in order to distinguish point-symmetric strongly regular graphs in three infinite families. 

We would also like to mention K.~Kov\'a\c{c}ikov\'a's Dissertation Thesis \cite{Kov15} about counting subgraphs in strongly regular graphs, where she, among other things, counts the induced subgraphs of order $\le 9$ in the putative Moore graph of valency $47$. Her methods involve the solution of huge linear systems of equations. It will be interesting to compare her approach with the one given in this paper. 

The $(2,t)$-regular graphs correspond exactly to the graphs that satisfy the $t$-vertex condition. 
There is a longstanding conjecture by M.~Klin \cite{FarKliMuz94}, that there exists a natural number $t_0$ such that for each $t\ge t_0$ all $(2,t)$-regular graphs are $2$-homogeneous (\ie they are rank $3$ graphs).  The largest $t$ for which the existence of a non rank 3,  $(2,t)$-regular graph is settled is $t=7$, due to Reichard \cite[Theorem 2]{Rei15}. Thus in Klin's conjecture we have $t_0\ge 8$.   

We should mention that the motivation to study graphs with the $t$-vertex condition comes not only from Klin's conjecture. The driving motivation to introduce the $t$-vertex condition was to distinguish the rank-$3$-graphs from other strongly regular graphs with the same parameters. In the times before the announcement of the classification of finite simple groups there was the hope to uncover in this way new sporadic finite simple groups. A typical example for the use of the $t$-vertex condition as a distinguishing invariant is \cite{Pas92}.

Up till now, $(3,t)$-regular graphs were known only for $t=4$ (apart from the $3$-homogeneous graphs). 
In this paper, the first cases of non $3$-homogeneous $(3,7)$-regular graphs are observed. Among the examples there are graphs whose automorphism group is intransitive. In view of Klin's conjecture and in view of the observation that $(3,t)$-regular graphs appear to be much  rarer than $(2,t)$-regular graphs, it seems sensible to ask whether there exists a $t_0$ such that all $(3,t)$-regular graphs with $t\ge t_0$ are $3$-homogeneous. This paper shows that if such $t_0$ exists, then $t_0\ge 8$. Note that every $(3,t)$-regular graph is $(2,t)$-regular. Thus, if Klin's conjecture turns out to be true, then this question can be answered using the classification of rank 3 graphs.

There is known only one $(4,5)$-regular graph that is not $4$-homogeneous, the McLaughlin graph on 275 vertices. A computer experiment showed that this graph is not $(4,6)$-regular. Is every $(4,6)$-regular graph $4$-homogeneous? 

\subsection*{Acknowledgements} %\longthanks 
 The present paper owes much to the numerous discussions  with  Misha Klin and Sven Reichard that we had over the years on the topic of regularity conditions of strongly regular graphs. My thanks go to Andy Woldar whose comments helped to improve a preliminary version of the paper. %Last but not least the many helpful remarks by the anonymous referees are gratefully acknowledged.    

\providecommand{\bysame}{\leavevmode\hbox to3em{\hrulefill}\thinspace}
\providecommand{\MR}{\relax\ifhmode\unskip\space\fi MR }
% \MRhref is called by the amsart/book/proc definition of \MR.
\providecommand{\MRhref}[2]{%
  \href{http://www.ams.org/mathscinet-getitem?mr=#1}{#2}
}
\providecommand{\href}[2]{#2}

%\bibliographystyle{amsplain-ac}
%\bibliography{tvc}

\end{document}